\documentclass[twoside,11pt]{article}

%

\usepackage{jmlr2e-arxiv}
\usepackage{amsmath}


%
%
%
%





\newcommand{\pa}{\mathrm{pa}}
\newcommand{\nei}{\mathrm{ne}}

\newcommand{\an}{\mathrm{an}}

\newcommand{\ant}{\mathrm{ant}}

\newcommand{\dse}{\,\mbox{$\perp$}\,}

\newcommand{\cip}{\mbox{\,$\perp\!\!\!\perp$\,}}
\newcommand{\sk}{\mathrm{sk}}

\newcommand{\cd}{\,|\,}

\newcommand{\nls}{\vspace{-1mm}\\}

\newcommand{\mtp}{{\rm MTP}_{2}}
\newcommand{\ci}{\mbox{\protect $\: \perp \hspace{-2.3ex}
\perp$ }}

\newcommand{\notdse}{\nolinebreak{\not\hspace{-1.5mm}\dse}}

\newcommand{\notci}{\nolinebreak{\not\hspace{-1.5mm}\ci}}

\newcommand{\n}[0]{\hspace*{.35em}}

\newcommand{\nn}[0]{\hspace*{.7em}}







\newcommand{\ful}{\mbox{$\, \frac{ \nn \nn \;}{ \nn \nn
}$}}

\newcommand{\fla}{\mbox{$\hspace{.05em} \prec
\!\!\!\!\!\frac{\nn \nn}{\nn}$}}

\newcommand{\fra}{\mbox{$\hspace{.05em} \frac{\nn
\nn}{\nn
}\!\!\!\!\! \succ \! \hspace{.25ex}$}}

\newcommand{\arc}{\mbox{$\hspace{.06em} \prec
\!\!\!\!\!\frac{\nn \nn}{\nn}
\!\!\!\!\!
\succ\! \hspace{.25ex}$}}











%





\begin{document}

\title{Faithfulness of Probability Distributions and Graphs}

\author{\name Kayvan Sadeghi \email k.sadeghi@statslab.cam.ac.uk \\
       \addr Statistical Laboratory\\
       University of Cambridge\\
       Centre for Mathematical Sciences, Wilberforce Road\\
       Cambridge, CB3 0WB, United Kingdom}


\maketitle

\begin{abstract}
 A main question in graphical models and causal inference is whether, given a probability distribution $P$ (which is usually an underlying distribution of data), there is a graph (or graphs) to which $P$ is faithful. The main goal of this paper is to provide a theoretical answer to this problem. We work with general independence models, which contain probabilistic independence models as a special case. We exploit a generalization of ordering, called preordering, of the nodes of (mixed) graphs. This allows us to provide sufficient conditions  for a given independence model to be Markov to a graph with the minimum possible number of edges, and more importantly,  necessary and sufficient conditions for a given probability distribution to be faithful to a graph.
We present our results for the general case of mixed graphs, but specialize the definitions and results to the better-known subclasses of undirected (concentration) and bidirected (covariance) graphs as well as directed acyclic graphs.
\end{abstract}

\begin{keywords}
 causal discovery, compositional graphoid, directed acyclic graph, faithfulness, graphical model selection, independence model, Markov property, mixed graph, structural learning
\end{keywords}

\section{Introduction}
Graphs have been used in graphical models in order to capture the conditional independence structure of probability distributions. Generally speaking, nodes of the graph correspond to random variables and edges to conditional dependencies \citep{lau96}. The connection between graphs and probability distributions is usually established in the literature by the concept of Markov property \citep{cli90,pea88,stu89}, which ensures that if there is a specific type of separation between nodes $i$ and $j$ of the graph ``given the node subset $C$" then random variables $X_i$ and $X_j$ are conditionally independent given the random vector $X_C$ in the probability distribution. However, the ``ultimate" connection between probability distributions and graphs requires the other direction to hold, namely for every conditional independence in the probability distribution to correspond to a separation in the graph. This connection has been called faithfulness of the probability distribution and the graph in \citet{spioo}, and the graph has been called the perfect map of such a distribution in \citet{pea88}.

However, ``given a probability distribution $P$ whether there is a graph (or graphs) to which $P$ is faithful" is an open problem, and consequently so is the problem of finding these graphs. This problem can be raised for any type of graph existing in the literature of graphical models ranging from different types of mixed graphs with three types of edges \citep{ric02,wer11,sad16} and different types of chain graphs \citep{lau89,fry90,and01,cox93,wer11} to better-known classes of undirected (concentration) \citep{dar80} and bidirected (covariance) \citep{kau96} graphs as well as directed acyclic graphs \citep{kii84,pea88}. Our goal is to provide an answer to this problem. A similar problem of ``given a graph whether there is a family of distributions faithful to it" has been answered for very specific types of graphs and specific types of distributions; for example, for Gaussian distributions and undirected graphs in \citet{lne07}, DAGs in \citet{pea88,gei90,spioo}, ancestral graphs in \citet{ric02}, and LWF chain graphs in \citet{pen11}; and discrete distributions and DAGs in \citet{gei90,mee95} and LWF chain graphs in \citet{pen09}.

The concept of faithfulness was originally defined for the purpose of causal inference \citep{spioo,pea88}, and the theory developed in this paper can be interpreted in causal language.  A main approach to causal inference is based on graphical representations of causal structures, usually represented by \emph{causal graphs} that are directed acyclic with nodes as random variables (that is a Bayesian network). Causal graphs are assumed to capture the true causal structure; see, for example, \citet{nea04}. A main assumption made is called the \emph{causal faithfulness condition} (CFC) stating that the true probability distribution is faithful to the true causal graph. Although the
distribution being faithful to the ``true underlying causal graph'' is a much stronger
assumption, it necessarily means that it must be faithful to ``some graph'' in order for one to be able to use graphical methods for causal inference.  For an extensive discussion on the CFC, see \citet{zha08}, and for related philosophical discussions, see, for example, \citet{woo98,ste06}. Although the results in this paper cover those of causal Bayesian networks, we have developed our theory to a much more general case of simultaneous representation of ``direct effects", ``confounding", and ``non-causal symmetric dependence structures"; see, for example, \citet{pea09}.

In this paper, we work with the general independence model $\mathcal{J}$. Independence models $\mathcal{J}(P)$ induced by probability distributions $P$ are a special case. We provide necessary and sufficient conditions for $\mathcal{J}$ to be graphical, that is to be faithful to a graph. In short, as proved in Theorem \ref{thm:2}, $\mathcal{J}$ is graphical if and only if it satisfies the so-called compositional graphoid axioms as well as singleton-transitivity, and what we call ordered upward- and downward-stability.

As apparent from their names, ordered upward- and downward-stability depend on a generalization of ordering of variables, and consequently the nodes of the graph (called preordering). We provide our results for the most general case of mixed graphs, which contains almost all classes of graphs used in graphical models as  subclasses with the exception of the chain graphs with the AMP Markov property \citep{and01} and its generalizations \citep{pen14}. However, based on the preordering w.r.t.\ which ordered upward- and downward-stability are satisfied, one can deduce to what type (or types) of graph $P$ is faithful.

Since many of the subclasses of mixed graphs are not as well-known or used as some simpler classes of graphs, we provide a specialization of definitions and the main results for undirected and bidirected graphs as well as directed acyclic graphs at the end of the paper. We advise the readers that are only familiar with or interested in these simpler subclasses to skip the general definitions and results for mixed graphs, and focus on the specialization.

The structure of the paper is as follows: In the next section, we provide the basic definitions for graphs as well as different classes of graphs in graphical models, and independence models. In Section \ref{sec:prob}, we define and provide basic results for Markovness and faithfulness of independence models and graphs. In Section \ref{sec:order}, we define and exploit the preordering for (so-called anterial) graphs, and then we define the concept of upward- and downward-stability. In Section \ref{sec:charac}, we provide sufficient conditions for an independence model to be so-called minimally Markov to a graph, and then we provide necessary and sufficient conditions for an independence model to be faithful to a graph. 
In Section \ref{sec:spec}, we specialize the definitions and results to the classes of undirected and bidirected graphs as well as directed acyclic graphs. In Section \ref{sec:res}, we show  how other results in the literature concerning faithfulness of certain probability distributions and certain classes of graphs are corollaries of our results. This also provides a nice set of examples for the theory presented in the paper. In Section \ref{sec:dis}, we end the paper with a short summary, a discussion on the statistical implications of the results, and an outline of future work.
\section{Definitions and Concepts}
In this section, we provide the basic definitions and concepts needed for the paper.
\subsection{Graph Theoretical Definitions and Concepts}
A \emph{graph} $G$ is a triple consisting of a \emph{node} set or
\emph{vertex} set $V$, an \emph{edge} set $E$, and a relation that with
each edge associates two nodes (not necessarily distinct), called
its \emph{endpoints}. When nodes $i$ and $j$ are the endpoints of an
edge, they are
\emph{adjacent}. We call a node adjacent to the node $i$, a \emph{neighbour} of $i$, and denote the set of all neighbours of $i$ by $\nei(i)$. We say that an edge is \emph{between} its two
endpoints. We usually refer to a graph as an ordered
pair $G=(V,E)$. Graphs $G_1=(V_1,E_1)$ and $G_2=(V_2,E_2)$ are called \emph{equal} if $(V_1,E_1)=(V_2,E_2)$. In this case we write $G_1=G_2$.

Notice that our graphs are \emph{labeled}, that is every node is considered a different object. Hence, for example, graph $i\ful j\ful k$ is not equal to $j\ful i\ful k$.

A \emph{loop} is an edge with
the same endpoints. Here we only discuss graphs without loops. \emph{Multiple edges} are edges with the
same pair of endpoints. A \emph{simple graph} has neither
loops nor multiple edges.

Graphs in this paper are so-called \emph{mixed graphs}, which contain three types of edges: \emph{lines} ($i\ful j$), \emph{arrows} ($i\fra j$), and \emph{arcs} ($i\arc j$), where in the two latter cases, we say that there is an \emph{arrowhead} at $j$. If we remove all arrowheads from edges of the graph  $G$, the obtained undirected graph is called the \emph{skeleton} of $G$. The simple graph whose edge $ij$ indicates whether there is an edge (or multiple edges) between $i$ and $j$ in $G$ is called the \emph{adjacency graph} of $G$. It is clear that if a graph is simple then its adjacency graph is the same as its skeleton. In this paper, only skeleton of anterial graphs or its subclasses are used, which, as will be defined later, are simple graphs; hence, we denote the  simple skeleton by $\sk(G)$. We say that we \emph{direct} the edges of a skeleton by putting arrowheads at the edges in order to obtain mixed graphs.

A \emph{subgraph} of a graph $G_1$ is graph $G_2$ such that $V(G_2)\subseteq V(G_1)$ and $E(G_2)\subseteq E(G_1)$ and the assignment of endpoints to edges in $G_2$ is the same as in $G_1$.
We define a subgraph \emph{induced by edges} $A\subseteq E$ of $G=(V,E)$ to be a subgraph that contains $V$ as the node set and all and only edges in $A$ as the edge set.

A \emph{walk} is a list $\langle v_0,e_1,v_1,\dots,e_k,v_k\rangle$ of nodes and edges such that for $1\leq i\leq k$, the edge $e_i$ has endpoints $v_{i-1}$ and $v_i$. A \emph{path} is a walk with no repeated node or edge. A maximal set of nodes in a graph whose members are connected by some paths constitutes a \emph{connected component} of the graph. A \emph{cycle} is a walk  with no repeated nodes or edges except for $v_0=v_k$.


We say a walk is \emph{between} the first and the last nodes of the list in $G$. We
call the first and the last nodes \emph{endpoints} of the walk and all other nodes \emph{inner nodes}.

%

A \emph{subwalk} of a walk $\omega=\langle i_0,e_1,i_1,\dots,e_n,i_n\rangle$ is a walk $\langle i_r,e_{r+1},i_{r+1},\dots,e_p,i_p\rangle$ that is a subsequence  of $\omega$ between two occurrences of nodes ($i_r,i_p$, $0\leq r\leq p\leq n$). If a subwalk forms a path then it is called a \emph{subpath} of $\omega$.

A walk $\omega=\langle i=i_0,i_1,\dots,i_n=j\rangle$ is \emph{directed} from $i$ to $j$ if all  edges $i_ki_{k+1}$, $0\leq k\leq n-1$, are arrows pointing from $i_k$ to $i_{k+1}$. If there is a directed walk from $i$ to $j$ then $i$ is an \emph{ancestor} of $j$ and $j$ is a \emph{descendant} of $i$. We denote the set of ancestors of $j$ by $\an(j)$.

A walk $\omega=\langle i=i_0,i_1,\dots,i_n=j\rangle$ is \emph{semi-directed} from $i$ to $j$  if it has at least one arrow, no arcs, and every arrow $i_ki_{k+1}$ is pointing from $i_k$ to $i_{k+1}$.  A walk between $i$ and $j$ is \emph{anterior} from $i$ to $j$ if it is semi-directed from $i$ to $j$ or  if it is undirected. If there is an anterior walk from $i$ to $j$ then we also say that $i$ is an \emph{anterior} of $j$. We use
the notation $\ant(j)$ for the set of all anteriors of $j$. For a set $A$, we define $\ant(A)=\bigcup_{j\in A}\ant(j)\setminus A$.

Notice that, unlike in most places in the literature (for example  \citet{ric02}), we use walks instead of paths to define ancestors and anteriors. Because of this and the fact that ancestral graphs have no arrowheads pointing to lines, our definition of anterior extends the notion of anterior for ancestral graphs in  \citet{ric02} with the modification that in this paper, a node is not an anterior of itself. Using walks instead of paths is immaterial, as shown in \citet{sadl16}.

%
%

A \emph{section} $\rho$ of a walk  is a maximal subwalk consisting only of lines, meaning that there is no other subwalk that only consists of lines and includes $\rho$. Thus, any walk decomposes uniquely into sections; these are not necessarily edge-disjoint and sections may also be single nodes. 
A section $\rho$ on a walk $\omega$ is called a  \emph{collider section} if one of the following walks is a subwalk of $\omega$: $i\fra\rho\fla\,j$, $i\arc\rho\fla\,j$, $i\arc\rho\arc\,j$. All other sections on $\omega$ are called \emph{non-collider} sections.  Notice that a section may be a collider on one walk and a non-collider on another, but we may speak of collider or non-collider sections without mentioning the relevant walk when this is apparent from the context.

%
\subsection{Different Classes of Graphs}
All the subclasses of graphs included here are \emph{acyclic} in the sense that they do not contain semi-directed cycles. The most general class of graphs discussed here is the class of \emph{chain mixed graphs} (CMGs) \citep{sad16} that contains all mixed graphs without semi-directed cycles. They may have multiple edges containing an arc and a line or an arc and an arrow but not a combination of an arrow and a line or arrows in opposite directions, as such combinations would constitute semi-directed cycles. In this paper, in using the term ``graph'' we mean a CMG unless otherwise stated.

A general class of graphs that plays an important role in this paper is the class of \emph{anterial graphs} (AnGs) \citep{sad16}. AnGs are CMGs in which an endpoint of an arc cannot be an anterior of the other endpoint.

CMGs include \emph{summary graphs} (SGs) \citep{wer11} and \emph{acyclic directed mixed graphs} (ADMGs) \citep{ric03} as a subclass, but not  \emph{AMP chain graphs} \citep{and01}, and AnGs include \emph{ancestral graphs} (AGs) \citep{ric02} but not SGs or ADMGs. These have typically been introduced to describe independence structures obtained by marginalization and conditioning in DAG independence models; see for example \citet{sad13}.

AnGs also contain \emph{undirected and bidirected chain graphs} (CGs). A chain graph is an acyclic graph  so that if we remove all arrows, all connected components of the resulting graph -- called \emph{chain components} -- contain one type of edge only.  If all chain components contain lines, the chain graph is an undirected chain graph (UCG) (known as \emph{LWF chain graphs}); if all chain components contain arcs  it is a bidirected chain graph (BCG) (known as \emph{multivariate regression chain graphs}). AnGs also contain \emph{Regression graphs} \citep{wers11}, which are chain graphs consisting of lines and arcs (although dashed undirected edges have mostly been used instead of arcs in the literature), where there is no arrowhead pointing to lines.


These also contain graphs with only one type of edge; namely \emph{undirected graphs} (UGs), containing only lines;  \emph{bidirected graphs} (BGs), containing  only bidirected edges; and \emph{directed acyclic graphs} (DAGs), containing only arrows and being acyclic. Clearly, a graph without arrows has no semi-directed cycles, and a semi-directed cycle in a graph with only arrows is a directed cycle. \citet{cox93,kau96,wer98,drt08}  used the terms \emph{concentration graphs} and  \emph{covariance graphs} for UGs and BGs, referring to their independence interpretation associated with covariance and concentration matrices for Gaussian graphical models. DAGs have been particularly useful  to describe causal Markov relations; see for example \citet{kii84,pea88,lau88,gei90,spioo}. 

For an extensive discussion on the subclasses of acyclic graphs and their relationships and hierarchy, see \citet{sadl16}.

\subsection{Independence Models and Their Properties}\label{sec:propind}
An \emph{independence model} $\mathcal{J}$ over a finite set $V$ is a set of triples $\langle X,Y\cd Z\rangle$ (called \emph{independence statements}), where $X$, $Y$, and $Z$ are disjoint subsets of $V$; $Z$
may be empty, but $\langle \varnothing,Y\cd Z\rangle$ and $\langle X,\varnothing\cd Z\rangle$ are always included in $\mathcal{J}$. The independence statement $\langle X,Y\cd Z\rangle$ is read as ``$X$ is independent of $Y$ given $Z$''. Independence models may in  general have a  probabilistic interpretation, but not necessarily. Similarly, not all independence models can be easily represented by graphs. For further discussion on general independence models, see \citet{stu05}.

In order to define probabilistic independence models, consider a set $V$ and a collection of random variables
$\{X_\alpha\}_{\alpha\in V}$ with state spaces $\mathcal{X}_\alpha, \alpha\in V$ and joint distribution $P$. We let $X_A=\{X_v\}_{v\in A}$ etc.\ for each subset $A$ of $V$. For disjoint subsets $A$, $B$, and $C$ of $V$
we use the short notation $A\cip B\cd C$ to denote that $X_A$ is \emph{conditionally independent of $X_B$ given $X_C$} \citep{daw79,lau96}, that is that for any measurable $\Omega\subseteq \mathcal{X}_A$ and $P$-almost all $x_B$ and $x_C$,
$$P(X_A \in \Omega\cd X_B=x_B, X_C=x_C)=P(X_A \in \Omega\cd X_C=x_C).$$
We can now induce an independence model $\mathcal{J}(P)$ by letting
\begin{displaymath}
\langle A,B\cd C\rangle\in \mathcal{J}(P) \text{ if and only if } A\ci B\cd C \text{ w.r.t.\ $P$}.
\end{displaymath}
Similarly we use the notation $A\notci B\cd C$ for $\langle A,B\cd C\rangle\notin \mathcal{J}(P)$.

In order to use graphs to represent independence models, the notion of \emph{separation} in a graph is fundamental. For three disjoint subsets $A$, $B$, and $C$, we use the notation $A\dse B\cd C$ if $A$ and $B$ are \emph{separated} given $C$, and $A\notdse B\cd C$ for $A$ and $B$ not separated given $C$.

The separation can be unified under one definition for all graphs by using walks instead of paths: A walk $\pi$ is \emph{connecting}  given $C$ if every  collider section of $\pi$ has a node in $C$ and all non-collider sections are disjoint from $C$. For pairwise disjoint subsets $(A,B,C)$, $A\dse B\cd C$ if there are no connecting walks between $A$ and $B$ given $C$. This is in wording the same as the definition in \citet{stub98}  for undirected (LWF) chain graphs (although it is in fact a generalization as collider sections are generalized), and a generalization of the \emph{$m$-separation} used with different wordings in \citet{ric02,wer11,wers11} for AGs and SGs, and of the  \emph{d-separation} of \citet{pea88}. For UGs, the notion has a direct intuitive meaning, so that $A\dse B \cd C$ if all paths from $A$ to $B$  pass through $C$.

If $A$, $B$, or $C$ has only one member $\{i\}$, $\{j\}$, or $\{k\}$, for better readability, we write $\langle i,j\cd k\rangle\in \mathcal{J}$ instead of $\langle \{i\},\{j\}\cd \{k\}\rangle\in \mathcal{J}$; and similarly for $i\dse j\cd k$  and $i\ci j\cd k$. We also write $A\dse B$ when $C=\varnothing$; and similarly $A\ci B$.


A graph $G$ induces an independence model $\mathcal{J}(G)$ by separation, letting
\begin{displaymath}
\langle A,B\cd C\rangle\in \mathcal{J}(G)\iff A\dse B\cd C \text{ in } G.
\end{displaymath}

An independence model $\mathcal{J}$ over a set $V$ is a \emph{semi-graphoid} if it satisfies the four following properties for disjoint subsets $A$, $B$, $C$, and $D$ of $V$:
 \begin{enumerate}
    \item $\langle A,B\cd C\rangle\in \mathcal{J}$ if and only if $\langle B,A\cd C\rangle\in \mathcal{J}$ (\emph{symmetry});
    \item if $\langle A,B\cup D\cd C\rangle\in \mathcal{J}$ then $\langle A,B\cd C\rangle\in \mathcal{J}$ and $\langle A,D\cd C\rangle\in \mathcal{J}$ (\emph{decomposition});
    \item if $\langle A,B\cup D\cd C\rangle\in \mathcal{J}$ then $\langle A,B\cd C\cup D\rangle\in \mathcal{J}$ and $\langle A,D\cd C\cup B\rangle\in \mathcal{J}$ (\emph{weak union});
    \item if $\langle A,B\cd C\cup D\rangle\in \mathcal{J}$ and $\langle A,D\cd C\rangle\in \mathcal{J}$
    then $\langle A,B\cup D\cd C\rangle\in \mathcal{J}$ (\emph{contraction}).
 \end{enumerate}
Notice that the reverse implication of contraction clearly holds by decomposition and weak union. A semi-graphoid for which the reverse implication of the weak union property holds is said to be a \emph{graphoid}; that is, it also satisfies
\begin{itemize}
	\item[5.] if $\langle A,B\cd C\cup D\rangle\in \mathcal{J}$ and $\langle A,D\cd C\cup B\rangle\in \mathcal{J}$ then $\langle A,B\cup D\cd C\rangle\in \mathcal{J}$ (\emph{intersection}).
\end{itemize}
Furthermore, a graphoid or semi-graphoid for which the reverse implication of the decomposition property holds is said to be \emph{compositional}, that is, it also satisfies
\begin{itemize}
	\item[6.] if $\langle A,B\cd C\rangle\in \mathcal{J}$ and $\langle A,D\cd C\rangle\in \mathcal{J}$ then $\langle A,B\cup D\cd C\rangle\in \mathcal{J}$ (\emph{composition}).
\end{itemize}
Separation in graphs satisfies all these properties; see Theorem 1 in \citet{sadl16}:
\begin{proposition}\label{prop:110}
For any graph $G$, the independence model $\mathcal{J}(G)$ is a compositional graphoid.
\end{proposition}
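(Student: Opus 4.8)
The plan is to verify each of the six compositional graphoid axioms directly from the walk-separation definition, showing that whenever the hypotheses hold as separation statements they force the conclusion. Throughout, recall that $A\dse B\cd C$ means there is no walk between $A$ and $B$ all of whose collider sections meet $C$ and all of whose non-collider sections avoid $C$; equivalently, $A\notdse B\cd C$ is witnessed by a single connecting walk. Symmetry is immediate since a walk between $A$ and $B$ read backwards is a walk between $B$ and $A$ with the same sections, hence the same collider/non-collider status. For decomposition and weak union (and the easy direction of contraction), the strategy is the contrapositive: a connecting walk witnessing $A\notdse B\cd C$ (respectively $A\notdse D\cd C$) is, after possibly truncating it at its first encounter with $B\cup D$, also a connecting walk witnessing $A\notdse B\cup D\cd C$; for weak union one additionally checks that enlarging the conditioning set from $C$ to $C\cup D$ cannot destroy connectivity of a walk whose endpoint side was already avoiding $D$, which requires care about where the walk meets $D$.

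The two substantive axioms are intersection and composition, and I expect composition to be the main obstacle. For composition, I would argue by contradiction: suppose $A\dse B\cd C$ and $A\dse D\cd C$ but $A\notdse B\cup D\cd C$, witnessed by a connecting walk $\pi$ from some $a\in A$ to some $x\in B\cup D$. Without loss of generality $x\in B$ (the case $x\in D$ is symmetric). One is tempted to conclude immediately that $\pi$ contradicts $A\dse B\cd C$, but the subtlety is that $\pi$ might pass through $D$ — however, $D$ is disjoint from $C$, so any node of $D$ lying on a non-collider section of $\pi$ is already forbidden (non-collider sections avoid $C$ but may still contain non-$C$ nodes, so this is not yet a contradiction); the real point is that $D\cap C=\varnothing$ means nodes of $D$ on $\pi$ are simply ordinary inner nodes, and $\pi$ restricted to run from $a$ to $x$ is a perfectly good connecting walk between $A$ and $B$ given $C$, contradicting $A\dse B\cd C$. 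So in fact composition follows once one observes that membership in $B\cup D$ of the far endpoint splits into the two cases and neither the collider nor non-collider conditions reference $B$ or $D$ at all — only $C$. The genuine care is needed for intersection.

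For intersection, assume $A\dse B\cd C\cup D$ and $A\dse D\cd C\cup B$, and suppose for contradiction a connecting walk $\pi$ from $a\in A$ to $x\in B\cup D$ given $C$ exists; say $x\in B$ (the other case symmetric). Truncating $\pi$ at its first hit of $B\cup D$, we may assume no inner node of $\pi$ lies in $B\cup D$ and $x\in B$. Now compare $\pi$ against the conditioning set $C\cup D$: the only way $\pi$ fails to be connecting given $C\cup D$ is that some non-collider section of $\pi$ meets $D$, or some collider section of $\pi$ fails to meet $C\cup D$ — but collider sections of $\pi$ meet $C$ already, so only the first failure is possible. Since inner nodes of $\pi$ avoid $B\cup D$, in particular they avoid $D$, so no section of $\pi$ except possibly one containing the endpoint $a$ meets $D$; but $a\in A$ and $A$ is disjoint from $D$, so no section meets $D$ at all. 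Hence $\pi$ is connecting given $C\cup D$ as well, contradicting $A\dse B\cd C\cup D$. The one place to be careful is the section containing a collider node that is itself in $B\cup D$: after truncation this cannot happen at an inner position, and at the endpoint $x\in B$, a section is not a collider section unless it is an inner section, so the bookkeeping closes. I would also note that contraction's forward direction uses a concatenation argument — glue a connecting walk from $a$ to $D$ given $C$ to a connecting walk from $a$ to $B$ given $C\cup D$ — and check the section at the gluing point $a\in A$ is non-collider (it is, since $a$ is an endpoint of the composite walk on the relevant side), so the glued walk is connecting given $C$. Finally, since all six properties hold, $\mathcal{J}(G)$ is a compositional graphoid, as claimed. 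The heart of the argument is the repeated observation that the collider/non-collider classification of sections depends only on the conditioning set, never on the endpoint sets, so truncation and endpoint-disjointness do all the work.
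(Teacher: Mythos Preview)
The paper does not give its own proof of this proposition; it simply cites Theorem~1 of \citet{sadl16}. Your overall strategy (take contrapositives and manipulate connecting walks) is the standard one, and your treatments of symmetry, decomposition, and composition are correct --- in particular your observation that the collider/non-collider conditions reference only $C$, so a connecting walk from $A$ to $x\in B\cup D$ given $C$ is automatically a connecting walk from $A$ to $B$ (or $D$) given $C$, is exactly the point for composition.

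There is, however, a genuine gap in your intersection argument. When you truncate $\pi$ at its \emph{first} hit $x$ of $B\cup D$, the section of the original $\pi$ containing $x$ may have been a \emph{collider} section; in that case it may contain a node $y\in C$ occurring before $x$. After truncation, the section containing $x$ becomes an endpoint section, hence non-collider, yet it still contains $y\in C$ --- so the truncated walk is not connecting given $C$, let alone $C\cup D$. (Concretely: in $a\fra u\,\ful\, y\,\ful\, x\,\ful\, v\fla b$ with $y\in C$, $x,b\in B$, the original walk is connecting given $C$, but truncating at $x$ produces $a\fra u\,\ful\, y\,\ful\, x$, whose endpoint section $u\,\ful\, y\,\ful\, x$ is non-collider and meets $C$.) Your sentence ``the bookkeeping closes'' does not address this. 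The fix is to truncate at the first node of $B\cup D$ lying on a \emph{non-collider} section of $\pi$ (the endpoint $e$ is always such a node, so one exists); then the new endpoint section is a subset of a non-collider section of $\pi$ and therefore avoids $C$, and your remaining argument goes through. A similar refinement is needed for weak union and contraction. Separately, your sketch of contraction as a ``concatenation argument'' gluing two walks at $a\in A$ is not the right shape: the contrapositive asks you to start from a single connecting walk $A\to B\cup D$ given $C$ and produce \emph{either} a walk $A\to B$ given $C\cup D$ \emph{or} a walk $A\to D$ given $C$, which is again a truncation argument, not a gluing.
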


On the other hand, probabilistic independence models are always semi-graphoids \citep{pea88}, whereas the converse is not necessarily true; see \citet{stu89}. If, for example, $P$ has strictly positive density, the induced independence model is always a graphoid; see, for example, Proposition 3.1 in \citet{lau96}. See also \citet{pet15} for a necessary and sufficient condition for the intersection property to hold. If the distribution $P$ is a regular multivariate Gaussian distribution, $\mathcal{J}(P)$ is a compositional graphoid; for example see \citet{stu05}.
Probabilistic independence models with positive densities are not in general compositional; this only holds for special types of multivariate distributions such as, for example,  Gaussian distributions and the symmetric binary distributions used in \citet{wer09}.

Another important property that is satisfied by separation in all graphs, but not necessarily for probabilistic independence models, is \emph{singleton-transitivity} (also called \emph{weak transitivity} in \citet{pea88}, where it is shown that for Gaussian and binary distributions $P$, $\mathcal{J}(P)$ always satisfies it). For $i$, $j$, and $k$, single elements in $V$,
 \begin{itemize}
	\item[7.] if $\langle i,j\cd C\rangle\in \mathcal{J}$ and $\langle i,j\cd C\cup\{k\}\rangle\in \mathcal{J}$ then $\langle i,k\cd C\rangle\in \mathcal{J}$ or $\langle j,k\cd C\rangle\in \mathcal{J}$ (singleton-transitivity).
\end{itemize}
A singleton-transitive compositional graphoid is equivalent to what is called a \emph{Gaussoid} in \citet{lne07} with a rather different axiomatization. The name reminds one that these are the axioms satisfied by the independence model of a regular Gaussian distribution \citep{pea88}.
\begin{proposition}\label{prop:1}
For a graph $G$, $\mathcal{J}(G)$ satisfies singleton-transitivity.
\end{proposition}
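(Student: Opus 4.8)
The plan is to prove the logically equivalent \emph{contrapositive}. Fix a graph $G$ and distinct nodes $i,j,k$ with $i,j,k\notin C$, and assume $i\notdse k\cd C$ and $j\notdse k\cd C$ in $G$. I will exhibit a walk between $i$ and $j$ that is connecting given $C$, or one that is connecting given $C\cup\{k\}$; by the definition of $\mathcal{J}(G)$ this yields $i\notdse j\cd C$ or $i\notdse j\cd C\cup\{k\}$, which is precisely the contrapositive of singleton-transitivity.

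First I would fix a connecting walk $\pi_1$ between $i$ and $k$ given $C$ and a connecting walk $\pi_2$ between $j$ and $k$ given $C$; since using paths in place of walks does not change the separation relation \citep{sadl16}, I may take $\pi_1$ and $\pi_2$ to be paths, so that $k$ occurs exactly once in each (as an endpoint). Orient $\pi_1$ from $i$ to $k$ and $\pi_2$ from $k$ to $j$, and let $\pi$ be their concatenation at $k$, a walk between $i$ and $j$. The central object is the section $\rho$ of $\pi$ that contains the junction node $k$: because $k$ occurs only once in each $\pi_t$, the section $\rho$ is exactly $\rho_1\cup\rho_2$ (as node sets), where $\rho_t$ is the endpoint section of $\pi_t$ at $k$, and $k$ occurs in $\pi$ nowhere except inside $\rho$. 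Every other section of $\pi$ is a section of $\pi_1$ or of $\pi_2$ with the same two incident edges, hence with unchanged node set and unchanged collider/non-collider status. Finally, examining the (at most two) edges of $\pi$ incident to $\rho$ — the edge of $\pi_1$ meeting $\rho_1$ and the edge of $\pi_2$ meeting $\rho_2$ — one sees that $\rho$ is a collider section of $\pi$ exactly when each such edge is present and carries an arrowhead at $\rho$; in particular, if $\pi_1$ or $\pi_2$ consists of lines only, then $\rho$ reaches an endpoint of $\pi$ and is a non-collider section.

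The argument then splits on whether $\rho$ is a collider section of $\pi$. If it is, I claim $\pi$ is connecting given $C\cup\{k\}$: the section $\rho$ is a collider section and it contains $k$; every other collider section of $\pi$ is a collider section of $\pi_1$ or $\pi_2$, hence already meets $C\subseteq C\cup\{k\}$; and every non-collider section of $\pi$ other than $\rho$ is a non-collider section of $\pi_1$ or $\pi_2$, hence disjoint from $C$, and it contains no occurrence of $k$, hence is disjoint from $C\cup\{k\}$. Thus $i\notdse j\cd C\cup\{k\}$. If instead $\rho$ is a non-collider section, I claim $\pi$ is connecting given $C$: $\rho=\rho_1\cup\rho_2$ with each $\rho_t$ a non-collider (endpoint) section of the connecting walk $\pi_t$, hence disjoint from $C$, so $\rho$ is disjoint from $C$; and all remaining sections of $\pi$ are sections of $\pi_1$ or $\pi_2$ of unchanged type, so they satisfy the connecting-walk conditions relative to $C$ because $\pi_1$ and $\pi_2$ do. Thus $i\notdse j\cd C$. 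In either case the contrapositive holds, which proves the proposition.

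I expect the only real work to be the bookkeeping at the junction: making precise that $\rho_1\cup\rho_2$ is genuinely a single section of $\pi$; that it is a collider section exactly under the stated arrowhead condition, so that the two cases are exhaustive and the degenerate all-lines situations fall into the non-collider case; that the remaining sections of $\pi$ keep their collider status; and — the one place where the reduction to paths is used — that $k$ occurs in $\pi$ only inside $\rho$, so that no stray non-collider section of $\pi$ containing $k$ can spoil the ``$C\cup\{k\}$-connecting'' claim in the first case. Everything else is a routine verification against the definition of a connecting walk.
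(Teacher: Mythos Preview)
Your proposal is correct and follows the same idea as the paper's proof: concatenate a connecting walk from $i$ to $k$ with one from $j$ to $k$, and split on whether the section containing the junction node $k$ is a collider section (giving $i\notdse j\cd C\cup\{k\}$) or a non-collider section (giving $i\notdse j\cd C$). The paper's proof is only three sentences and tacitly treats ``the section $\rho$ that contains $k$'' as unique; your reduction to connecting \emph{paths} makes this rigorous by ensuring $k$ appears only at the junction, which is exactly the bookkeeping the paper elides.
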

\begin{proof}
If there is a walk between $i$ and $k$ given $C$ and a walk between $j$ and $k$ given $C$ then by connecting these two walks, we obtain a walk $\pi$ between $i$ and $j$. Except the section $\rho$ that contains $k$, all collider sections on $\pi$ have a node in $C$ and all non-collider sections are outside $C$. Hence depending on $\rho$ being a collider or non-collider, $\pi$ is connecting given $C\cup\{k\}$ or $C$ respectively.
\end{proof}

\section{Markov and Faithful Independence Models}\label{sec:prob}
In this section, we define and discuss the concepts of Markovness and faithfulness for probability distributions, independence models, and graphs.
\subsection{Markov Properties}\label{sec:marpr}
For a graph $G=(V,E)$, an independence model $\mathcal{J}$ defined over $V$ satisfies the \emph{global Markov property} w.r.t.\ $G$, or is simply \emph{Markov} to $G$, if for  disjoint subsets $A$, $B$, and $C$ of $V$ it holds that  $$A\dse B\cd C \implies \langle A,B\cd C\rangle\in \mathcal{J},$$
or equivalently $\mathcal{J}(G)\subseteq\mathcal{J}$. In particular, the graphical independence $\mathcal{J}(G)$ is trivially Markov to its graph $G$.

For a probability distribution $P$, we simply say $P$ is \emph{Markov to} a graph $G$ if $\mathcal{J}(P)$ is Markov to $G$, that is if $\mathcal{J}(G)\subseteq\mathcal{J}(P)$.
%
Notice that every independence model over $V$ is Markov to the complete graph with the node set $V$.

For a graph $G=(V,E)$, an independence model $\mathcal{J}$ defined over $V$ satisfies a \emph{pairwise Markov property} w.r.t.\ a graph $G$, or is simply \emph{pairwise Markov to} $G$, if for  every non-adjacent pair of nodes $i,j$, it holds that  $ \langle i,j\cd C(i,j)\rangle\in \mathcal{J}$, for some $C(i,j)$, where $C(i,j)$ is \emph{the conditioning set} of the pairwise Markov property and does not include $i$ and $j$.

The independence model $\mathcal{J}(G)$ only satisfies pairwise Markov properties  w.r.t.\ what is called \emph{maximal graphs}, which are graphs where the lack of an edge between $i$ and $j$ corresponds to a conditional separation statement for $i$ and $j$. For a probability distribution $P$, we say $P$ is \emph{pairwise Markov to} a graph $G$ if $\mathcal{J}(P)$ is pairwise Markov to $G$.

For maximal graphs, a pairwise Markov property is defined by letting $C(i,j)=\ant(i)\cup \ant(j)\setminus\{i,j\}$ \citep{sadl16}, which we henceforth use as ``the'' pairwise Markov property. The conditioning set of the pairwise Markov property simplifies for DAGs, $C(i,j)=an(i)\cup \an(j)\setminus\{i,j\}$ \citep{ric02,sadl14};  for BGs, $C(i,j)=\varnothing$ (as defined in \citet{wer98}); and for connected UGs, $C(i,j)=V\setminus\{i,j\}$ (as defined in \citet{lau96}).

This pairwise Markov property, is in fact equivalent to the global Markov property under compositional graphoids; see Theorem 4 of \citet{sadl16}:
\begin{proposition}\label{prop:114}
Let $G$ be a maximal graph. If the independence model $\mathcal{J}$  is a compositional graphoid, then $\mathcal{J}$ is pairwise Markov to $G$ if and only if $\mathcal{J}$ is Markov to $G$.
\end{proposition}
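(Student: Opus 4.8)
\emph{Plan.} The implication from Markovness to pairwise Markovness is immediate and uses nothing about compositionality. If $\mathcal{J}(G)\subseteq\mathcal{J}$, then by maximality of $G$ every non-adjacent pair $i,j$ satisfies $i\dse j\cd\ant(i)\cup\ant(j)\setminus\{i,j\}$ in $G$, so $\langle i,j\cd\ant(i)\cup\ant(j)\setminus\{i,j\}\rangle\in\mathcal{J}(G)\subseteq\mathcal{J}$, which is exactly pairwise Markovness for the conditioning set $C(i,j)=\ant(i)\cup\ant(j)\setminus\{i,j\}$. So the whole content is the converse, and (consistently with the statement) no use will be made of singleton-transitivity.

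For the converse the plan is, first, to reduce to singleton pairs. Given $A\dse B\cd C$ in $G$, Proposition \ref{prop:110} says $\mathcal{J}(G)$ is a compositional graphoid, so ``$A\dse B\cd C$ in $G$'' is equivalent to ``$i\dse j\cd C$ in $G$ for all $i\in A,\,j\in B$'' (forward by decomposition, backward by composition and symmetry). Dually, since $\mathcal{J}$ is by hypothesis a compositional graphoid, it suffices to prove $\langle i,j\cd C\rangle\in\mathcal{J}$ for each such singleton pair, and composition and symmetry in $\mathcal{J}$ then rebuild $\langle A,B\cd C\rangle\in\mathcal{J}$. Since adjacent nodes are never separated (the connecting edge is itself a connecting walk: its sections are non-colliders and disjoint from $C$), we may assume $i,j$ non-adjacent, and then pairwise Markovness already supplies one independence, $\langle i,j\cd C_0\rangle\in\mathcal{J}$ with $C_0:=\ant(i)\cup\ant(j)\setminus\{i,j\}$.

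The remaining task is to transport the conditioning set from the canonical separator $C_0$ to the given $C$ (which also separates $i$ from $j$ in $G$). I would organise this as an induction, most transparently as the chain ``pairwise $\Rightarrow$ ordered local $\Rightarrow$ global'', using a preordering of $V$ of the kind developed later in this paper so that, for conditioning sets that are anterially saturated for $\{i,j\}$, the conditioning set of a statement $\langle i,j\cd D\rangle\in\mathcal{J}$ can be enlarged or contracted one node at a time without losing separation. Each elementary step that puts a node $k$ into the conditioning set is realised in $\mathcal{J}$ by composing $\langle i,j\cd D\rangle$ with an auxiliary $\langle i,k\cd D\rangle$ (or $\langle j,k\cd D\rangle$) and applying weak union; each step that removes $k$ is realised from $\langle i,j\cd D\cup\{k\}\rangle$ together with $\langle i,k\cd D\rangle$ by contraction and then decomposition. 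The intersection property is exactly what makes these moves available in both directions, and the auxiliary statements come from the induction hypothesis applied to a strictly smaller instance; iterating the moves along a chain of separating sets joining $C_0$ to $C$ yields $\langle i,j\cd C\rangle\in\mathcal{J}$.

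I expect the real obstacle to be the graph-theoretic bookkeeping, not the graphoid algebra, and it is a genuinely mixed-graph difficulty. Unlike in undirected graphs, enlarging a conditioning set can \emph{open} a collider section and hence create a connecting walk, so one cannot move $C$ to $C_0$ through arbitrary intermediate sets: the whole chain of moves must stay among conditioning sets that are anterially saturated for $\{i,j\}$, which is precisely why the pairwise property is phrased with the anterior-based separator and why the preordering is the right device here. The second point to verify is that the reductions feeding the induction — restricting $\mathcal{J}$ and passing to the appropriate marginal/subgraph — preserve maximality, the compositional graphoid axioms and pairwise Markovness, so the induction hypothesis genuinely applies; these closure properties are available from \citet{sad16} and \citet{sadl16}. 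Granting this structural scaffolding, the rest is the routine manipulation of symmetry, decomposition, weak union, contraction, composition and intersection sketched above.
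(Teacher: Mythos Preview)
The paper does not supply its own proof of this proposition: it is stated with an explicit reference, ``see Theorem 4 of \citet{sadl16}'', and no argument is given in the text. So there is nothing here to compare your proposal against line by line.

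That said, your plan is broadly aligned with how the cited result is actually established. The chain ``pairwise $\Rightarrow$ local (ordered) $\Rightarrow$ global'' under the compositional graphoid axioms is exactly the architecture used in \citet{sadl16}; the reduction to singleton pairs via decomposition on the graph side and composition on the $\mathcal{J}$ side is also standard. Your diagnosis that the substance lies in the graph-theoretic control of conditioning sets (because adding a node can open a collider section) is correct, and this is why the anterior-based separator $C(i,j)=\ant(i)\cup\ant(j)\setminus\{i,j\}$ is the right pairwise hypothesis.

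Two cautions. First, you propose to use ``a preordering of $V$ of the kind developed later in this paper'', but that machinery is introduced only for anterial graphs, whereas Proposition~\ref{prop:114} is stated for arbitrary maximal CMGs; the reduction to an AnG via Markov equivalence would have to be made explicit, and in any case this would make the argument logically dependent on material that, in the paper's order, comes after the proposition. The proof in \citet{sadl16} avoids this by working directly with anterior sets rather than with an abstract preorder. Second, what you have written is a plan rather than a proof: the inductive parameter, the precise auxiliary independence statements, and the verification that each intermediate conditioning set still separates $i$ from $j$ are all left as ``bookkeeping''. Those are exactly the steps where the compositional graphoid axioms are each used, and where a sketch can silently fail; if you intend a self-contained proof, that part needs to be written out.
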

In particular, for UGs, the equivalence of the global and pairwise Markov properties holds under graphoids, and for BGs under compositional semi-graphoids.



Two undirected or two bidirected graphs $G$ and $H$, where $G\neq H$, induce different independence models; that is $\mathcal{J}(G)\neq\mathcal{J}(H)$. This is not necessarily true for larger subclasses. We call two graphs $G$ and $H$ such that $\mathcal{J}(G)=\mathcal{J}(H)$ \emph{Markov equivalent}. Conditions for Markov equivalence for most subclasses of graphs are known; see \citet{ver90,ali09,wers11}. Notice that two Markov equivalent maximal graphs have the same skeleton.

\subsection{Faithfulness and Minimal Markovness}
We say that an independence model $\mathcal{J}$ and a probability distribution $P$ are \emph{faithful} if $\mathcal{J}= \mathcal{J}(P)$. Similarly, we say that $\mathcal{J}$ and a graph $G$ are faithful if $\mathcal{J}= \mathcal{J}(G)$. We also say that $P$ and $G$ are faithful if $\mathcal{J}(P)= \mathcal{J}(G)$. If $P$ and $G$ are faithful then we may sometimes also say that $P$ is \emph{faithful to}  $G$ or vice versa, although in principle faithfulness is a symmetric relation; the same holds for saying that $\mathcal{J}$ is faithful to $G$ or $P$. Notice that we are extending the definition of faithfulness to include the relation between  independence models and graphs as well as independence models and probability distributions rather than only that between graphs and probability distributions as originally used in \citet{spioo}.

Thus, if $\mathcal{J}$ and $G$ are faithful then $\mathcal{J}$ is Markov to $G$, but it also requires every independence statement to correspond to a separation in $G$. We say that $\mathcal{J}$ or $G$ is \emph{probabilistic} if there is a distribution $P$ that is faithful to $\mathcal{J}$ or  $G$ respectively. If there is a graph that is faithful to $\mathcal{J}$ or $P$ then we say that $\mathcal{J}$ or $P$ is \emph{graphical}.

Our main goal is to characterize graphical probability distributions, and in addition, if existent, to provide graphs that are faithful to a given $P$. We solve this problem for the general case of independence models $\mathcal{J}$.

For a given independence model $\mathcal{J}$, we define the \emph{skeleton} of $\mathcal{J}$, denoted by $\sk(\mathcal{J})$, to be the undirected graph that is obtained from $\mathcal{J}$ as follows:  we define the node set of $\sk(\mathcal{J})$ to be $V$, and for every pair of nodes $i,j$, we check whether $\langle i,j\cd C\rangle\in\mathcal{J}$  holds for some $C\subseteq V\setminus\{i,j\}$; if it does not then we draw an edge between $i$ and $j$. One can similarly define $\sk(P)$ for a probability distribution $P$ by checking whether $i\ci j\cd C$. This is the same as the graph obtained by the first step of the SGS algorithm \citep{gly87}. Notice that this is not necessarily the same as the undirected graph obtained by the pairwise Markov property for UGs; see Section \ref{sec:un-bi}.


For UGs and BGs, the skeleton of $\mathcal{J}$ uniquely determines the graph, whereas  for other subclasses, there are several graphs with the same skeleton $\sk(\mathcal{J})$. As will be seen, the preordering, defined in the next section, enables us to direct the edges of the skeleton.
%

\begin{proposition}\label{prop:0}
If an independence model $\mathcal{J}$ is Markov to a graph $G$ then $\sk(\mathcal{J})$ is a subgraph of $\sk(G)$.
\end{proposition}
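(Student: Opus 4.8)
The plan is to prove the contrapositive on edges: show that every edge present in $\sk(\mathcal{J})$ must also be present in $\sk(G)$. Equivalently, I would show that if there is \emph{no} edge between $i$ and $j$ in $\sk(G)$, then there is no edge between $i$ and $j$ in $\sk(\mathcal{J})$. Since both skeletons have the same node set $V$, establishing this edge inclusion is exactly what is needed for the subgraph conclusion.

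So fix a pair of nodes $i,j$ with no edge between them in $\sk(G)$. Because $G$ is a (chain mixed) graph and $\sk(G)$ is its simple skeleton, ``no edge between $i$ and $j$ in $\sk(G)$'' means $i$ and $j$ are non-adjacent in $G$. The key graph-theoretic fact I would invoke is that non-adjacent nodes in $G$ are separated by \emph{some} conditioning set; concretely, one can take $C = \ant(i)\cup\ant(j)\setminus\{i,j\}$, and then $i\dse j\cd C$ in $G$ — this is essentially the content of the pairwise Markov property for maximal graphs recalled just before Proposition~\ref{prop:114}. (If one does not want to assume maximality, it suffices to note directly that for non-adjacent $i,j$ there is no connecting walk between them given $\ant(i)\cup\ant(j)\setminus\{i,j\}$, since every walk between them must pass through a section that is either a non-collider in this anterior set or a collider outside it.) Thus $\langle i,j\cd C\rangle \in \mathcal{J}(G)$ for this particular $C$.

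Now I use the hypothesis that $\mathcal{J}$ is Markov to $G$, i.e.\ $\mathcal{J}(G)\subseteq\mathcal{J}$. This gives $\langle i,j\cd C\rangle\in\mathcal{J}$. But by the very definition of $\sk(\mathcal{J})$, an edge between $i$ and $j$ is drawn precisely when \emph{no} set $C\subseteq V\setminus\{i,j\}$ satisfies $\langle i,j\cd C\rangle\in\mathcal{J}$; since we have exhibited such a $C$, no edge between $i$ and $j$ is drawn in $\sk(\mathcal{J})$. Hence every pair non-adjacent in $G$ is non-adjacent in $\sk(\mathcal{J})$, which is the desired contrapositive, and $E(\sk(\mathcal{J}))\subseteq E(\sk(G))$ with the common node set $V$, so $\sk(\mathcal{J})$ is a subgraph of $\sk(G)$.

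The only genuinely delicate point is the graph-theoretic lemma that non-adjacency in $G$ implies separation by some conditioning set — i.e.\ that a missing edge really does witness an independence statement in $\mathcal{J}(G)$. For maximal graphs this is exactly the pairwise Markov property quoted in the excerpt, so I would simply cite it; for general (possibly non-maximal) CMGs one has to argue it by hand, which is a short walk-analysis but is the place where care is needed. Everything else is a direct unwinding of the definitions of $\sk(\mathcal{J})$, of the Markov property as set inclusion $\mathcal{J}(G)\subseteq\mathcal{J}$, and of what it means to be a subgraph.
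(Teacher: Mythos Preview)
Your argument is essentially the same as the paper's: both prove the contrapositive on edges, using that a non-adjacent pair $i,j$ in $G$ admits a separating set (the pairwise Markov property with $C(i,j)=\ant(i)\cup\ant(j)\setminus\{i,j\}$), and then the Markov assumption transports this to $\langle i,j\cd C\rangle\in\mathcal{J}$, so no edge appears in $\sk(\mathcal{J})$.

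One correction to your closing remark: for genuinely non-maximal graphs the ``short walk-analysis'' you allude to cannot succeed. By definition of non-maximality there exist non-adjacent $i,j$ that are \emph{not} separated by any $C$, and for such $G$ the proposition actually fails (take $\mathcal{J}=\mathcal{J}(G)$: then $\sk(\mathcal{J})$ has the $ij$ edge while $\sk(G)$ does not). So maximality is not merely a convenience for citing the pairwise property --- it is needed for the statement itself. The paper's proof, like yours, tacitly relies on it.
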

\begin{proof}
Suppose that there is no edge between $i$ and $j$ in $\sk(G)$. Since the global Markov property w.r.t.\ $G$ implies the pairwise Markov property, it holds that $\langle i,j\cd C(i,j)\rangle\in\mathcal{J}$, which implies  $i$ is not adjacent to $j$ in $\sk(\mathcal{J})$.
\end{proof}
Hence, if $\mathcal{J}$ is Markov to a graph $G$ such that $\sk(G)=\sk(\mathcal{J})$ then $G$ has the fewest number of edges among those to which $\mathcal{J}$ is Markov. We say that $\mathcal{J}$ is \emph{minimally Markov} to a graph $G$ if $\mathcal{J}$ is Markov to $G$ and $\sk(G)=\sk(\mathcal{J})$. The same can be defined for probability distributions,  and has been used in the literature under the name of \emph{(causal) minimality assumption} \citep{pea09,spioo,zha08,nea04}.  Minimally Markov independence models to a graph are important since only these can also be faithful to the graph:
\begin{proposition}\label{prop:01}
If an independence model $\mathcal{J}$ and a graph $G$ are faithful then $\mathcal{J}$ is minimally Markov to $G$.
\end{proposition}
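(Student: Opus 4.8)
The plan is to unfold the definition of faithfulness, $\mathcal{J} = \mathcal{J}(G)$, and then verify the two ingredients of minimal Markovness separately: that $\mathcal{J}$ is Markov to $G$, and that $\sk(G) = \sk(\mathcal{J})$. The first is immediate, since $\mathcal{J}(G)\subseteq\mathcal{J}$ holds (with equality), so by definition $\mathcal{J}$ is Markov to $G$. Hence everything reduces to establishing the skeleton identity.

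For the skeleton identity I would prove the two inclusions. One direction is already available: since $\mathcal{J}$ is Markov to $G$, Proposition \ref{prop:0} gives that $\sk(\mathcal{J})$ is a subgraph of $\sk(G)$. So the real content is the reverse inclusion, namely that every edge of $\sk(G)$ is an edge of $\sk(\mathcal{J})$. Equivalently, using $\mathcal{J} = \mathcal{J}(G)$ and the definition of $\sk(\mathcal{J})$, I need to show: if $i$ and $j$ are adjacent in $G$, then $i \notdse j \cd C$ in $G$ for every $C \subseteq V \setminus \{i,j\}$.

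To see this I would exhibit an explicit connecting walk. By adjacency there is an edge between $i$ and $j$; take the one-edge walk consisting of that edge. Regardless of whether the edge is a line, an arrow, or an arc, this walk has no collider section, and its non-collider section (or sections) have node set contained in $\{i,j\}$, which is disjoint from $C$. Hence the walk is connecting given $C$, so $i \notdse j \cd C$ for every admissible $C$, and therefore $\langle i,j \cd C\rangle \notin \mathcal{J}(G) = \mathcal{J}$ for all such $C$; that is, $i$ and $j$ are adjacent in $\sk(\mathcal{J})$. Combining the two inclusions yields $\sk(G) = \sk(\mathcal{J})$, and together with Markovness this is exactly minimal Markovness.

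The argument is short and presents no serious obstacle; the only point that needs a little care is the direction of the skeleton inclusion not supplied by Proposition \ref{prop:0}, together with the routine check — against the definitions of section and collider section for each of the three edge types — that a single edge always forms a walk that is connecting given any conditioning set avoiding its endpoints.
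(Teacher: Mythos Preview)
Your proof is correct and follows essentially the same approach as the paper: both first note that Markovness is immediate from faithfulness, invoke Proposition~\ref{prop:0} for one skeleton inclusion, and then establish the reverse inclusion via the fact that adjacent nodes are never separated. The only cosmetic difference is that the paper argues the reverse inclusion by contraposition (non-adjacency in $\sk(\mathcal{J})$ yields some $\langle i,j\cd C\rangle\in\mathcal{J}=\mathcal{J}(G)$, hence a separation, hence non-adjacency in $G$), whereas you argue it directly by exhibiting the single-edge connecting walk; these are the same idea.
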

\begin{proof}
Since $\mathcal{J}$ is Markov to $G$, we need to prove that $\sk(G)=\sk(\mathcal{J})$. By Proposition \ref{prop:0}, $\sk(\mathcal{J})$ is a subgraph of $\sk(G)$. Now, suppose that there is no edge between $i$ and $j$ in $\sk(\mathcal{J})$. By the construction of $\sk(\mathcal{J})$, it holds that $\langle i,j\cd C\rangle\in\mathcal{J}$ for some $C$. Since $\mathcal{J}$ and $G$ are faithful, $i\dse j\cd C$ in $G$. This implies that $i$ is not adjacent to $j$ in $G$.
\end{proof}
Hence, we need to discuss conditions under which $\mathcal{J}$ is minimally Markov to a graph as well as conditions for faithfulness of minimally Markov independence models and graphs. These will be presented in Section \ref{sec:charac}.
\section{Preordering in Graphs and Ordered Stabilities}\label{sec:order}
In this section, we first define the preordering for sets and its validity for nodes of the graphs, and then use preordering to define some new properties of conditional independence.
\subsection{Preordering of the Nodes of a Graph} Over a set $V$, a \emph{partial order} is a binary relation $\leq$ that satisfies the following properties for all members $a,b,c\in V$:
\begin{itemize}
  \item $a\leq a$ (\emph{reflexivity});
  \item if $a\leq b$ and $b\leq a$ then $a=b$ (\emph{antisymmetry});
  \item if $a\leq b$ and $b\leq c$ then $a\leq c$ (\emph{transitivity}).
\end{itemize}
If $a \leq b$ or $b \leq a$ then $a$ and $b$ are \emph{comparable}; otherwise they are \emph{incomparable}.  A partial order under which every pair of elements is comparable is called a \emph{total order} or a \emph{linear order}.

A \emph{preorder} $\lesssim$ over $V$, on the other hand,  is a binary relation that is only reflexive and transitive. Given a preorder $\lesssim$ over $V$, one may define an equivalence relation $\sim$ on $V$ such that $a \sim b$ if and only if $a \lesssim b$ and $b \lesssim a$. This explains the use of the notation  $\lesssim$. It then holds that
\begin{itemize}
  \item if $a\lesssim b$ and $b\sim c$ then $a\lesssim c$; and
  \item if $a\lesssim b$ and $a\sim c$ then $c\lesssim b$.
\end{itemize}
We also use the notation $a<b$ indicating that $a\lesssim b$ and $a\not\sim b$.

We first provide the following result; see Section 5.2.1 of \citet{sch03}:
\begin{proposition}\label{prop:500}
Let $\lesssim$ be a preorder over the set $V$, and $\sim$ the equivalence relation on $V$ as defined above. Let also $Q$ be the set of all equivalence classes of $V$ w.r.t.\ $\sim$. Then the relation $\leq$ defined over $Q$ by $\big[[a]\leq [b]\iff a\lesssim b\big]$ is a partial order over $Q$.
\end{proposition}

We say that a  graph $G=(V,E)$ admits a \emph{valid preorder} $\lesssim$ if, for nodes $i$ and $j$ of $G$, the following holds:
\begin{itemize}
  \item if $i\ful j$ then $i\sim j$;
  \item if $i\fra j$ then $j<i$;
  \item if $i\arc j$ then $i$ and $j$ are incomparable.
\end{itemize}
The global interpretation of a valid preorder on graphs is as follows:
\begin{proposition}\label{prop:50}
Let $\lesssim$ be a valid preorder for a graph $G$. It then holds for nodes $i$ and $j$ of $G$  that if $i\in\ant(j)$ then $j\lesssim i$; in particular,
\begin{enumerate}
  \item if there is a semi-directed path from $i$ to $j$ then $j< i$; and
  \item if $i$ and $j$ are connected by a path consisting only of lines then $i\sim j$.
\end{enumerate}
\end{proposition}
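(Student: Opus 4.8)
The plan is to prove the claim by induction on the length of an anterior walk from $i$ to $j$, using the definition of ``valid preorder'' to handle the single-edge case and then extending along the walk. First I would recall that by definition $i \in \ant(j)$ means there is an anterior walk $\omega = \langle i = i_0, i_1, \dots, i_n = j\rangle$, which is either undirected (all lines) or semi-directed from $i$ to $j$ (at least one arrow, no arcs, all arrows pointing forward). So it suffices to show: for each consecutive edge $i_k i_{k+1}$ on such a walk we have $i_{k+1} \lesssim i_k$, and then chain these together via transitivity of $\lesssim$. If the edge $i_k i_{k+1}$ is a line, validity gives $i_k \sim i_{k+1}$, hence $i_{k+1} \lesssim i_k$; if it is an arrow it must point from $i_k$ to $i_{k+1}$ (since the walk is semi-directed from $i$ to $j$), so validity gives $i_{k+1} < i_k$, hence $i_{k+1} \lesssim i_k$. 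No arc can occur on an anterior walk. Composing, $i_n \lesssim i_{n-1} \lesssim \dots \lesssim i_0$, i.e.\ $j \lesssim i$.

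For the two refinements, the point is to track whether any \emph{strict} step occurs. For part (1), a semi-directed path from $i$ to $j$ contains at least one arrow, say $i_k \fra i_{k+1}$; that step gives $i_{k+1} < i_k$, while all other steps give $i_{\ell+1} \lesssim i_\ell$. I would then invoke the mixed transitivity rules listed in the excerpt for preorders --- namely ``if $a \lesssim b$ and $b \sim c$ then $a \lesssim c$'' and ``if $a \lesssim b$ and $a \sim c$ then $c \lesssim b$'' --- together with ordinary transitivity, to conclude that the whole chain collapses to $j \lesssim i$ with $j \not\sim i$, i.e.\ $j < i$. Concretely, one shows that a chain of $\lesssim$-steps containing at least one $<$-step yields a strict inequality overall: if $j \sim i$ held, then since $j \lesssim i$ and $i \lesssim j$ we could push the $\sim$ back through the chain using the two rules above and force $i_{k+1} \sim i_k$, contradicting $i_{k+1} < i_k$. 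For part (2), if $i$ and $j$ are joined by a path of lines, every step gives $i_{\ell+1} \sim i_\ell$, and since $\sim$ is an equivalence relation (reflexive, symmetric, transitive — symmetry being immediate from its definition), transitivity of $\sim$ yields $i \sim j$ directly.

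The main obstacle I anticipate is the ``strictness propagates'' argument in part (1): one must be careful that $\lesssim$ is only a preorder, so a chain $a \lesssim b \lesssim c$ with $a < b$ need \emph{not} give $a < c$ in general --- it only fails to give $a < c$ when $b \sim c$, but then $a \sim c$ would force $a \sim b$, contradiction. So the clean way is the contrapositive: assume $j \sim i$, i.e.\ also $i \lesssim j$, and derive a contradiction by showing every node on the path is $\sim$-equivalent to every other, using that $i_{k+1} \lesssim i_k$ for all $k$ plus $i_0 \lesssim i_n$ collapses the cycle. I would present this as a short lemma: if $a_0 \lesssim a_1 \lesssim \dots \lesssim a_n \lesssim a_0$ then $a_0 \sim a_1 \sim \dots \sim a_n$, which follows since $a_0 \lesssim a_m \lesssim a_0$ for each $m$ by transitivity around the cycle. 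Applying this with the given path (extended by the assumed reverse relation) contradicts the existence of a strict step $i_{k+1} < i_k$. Everything else is routine bookkeeping with the definitions.
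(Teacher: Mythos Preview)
Your proposal is correct and follows the same idea as the paper, which simply says ``the proof follows from transitivity of both preorders and anterior paths.'' You have unpacked this one-liner in full detail, including the cycle-collapse lemma needed to justify that a single strict step $i_{k+1} < i_k$ forces $j < i$ in part (1); the paper leaves that step entirely implicit, so your version is more complete but not a different route.
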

\begin{proof}
The proof follows from transitivity of both preorders and anterior paths.
\end{proof}

\begin{corollary}
Let $\lesssim$ be a valid preorder for a graph $G$ and $G_{-}$ its subgraph induced by lines. The equivalence classes of $G$ based on $\lesssim$ are the connected components of $G_{-}$. In addition, $\leq$ defines a partial order over the connected components of $G_{-}$ by  [if $i\fra j$, $i\in \tau$ and $j\in \delta$  then $\tau>\delta$], for $\tau$ and $\delta$ two connected components of $G_{-}$.
\end{corollary}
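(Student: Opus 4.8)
The plan is to read the statement as two assertions about the valid preorder $\lesssim$ on $G$: first, that the equivalence classes of $\sim$ are exactly the connected components of $G_{-}$ (the subgraph of $G$ induced by its lines); and second, that the induced partial order $\leq$ on those classes (which exists by Proposition~\ref{prop:500}) is correctly described by the arrow-rule $[i\fra j,\ i\in\tau,\ j\in\delta \implies \tau>\delta]$. Both will follow by combining the definition of a valid preorder with Proposition~\ref{prop:50} and its Corollary's parent, Proposition~\ref{prop:500}.

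First I would establish the equivalence-class claim. For the ``$\supseteq$'' direction: if $i$ and $j$ lie in the same connected component of $G_{-}$, they are joined by a path consisting only of lines, so by part~2 of Proposition~\ref{prop:50} we get $i\sim j$; hence a connected component of $G_{-}$ is contained in a single $\sim$-class. For the reverse ``$\subseteq$'' direction, suppose $i\sim j$, i.e.\ $i\lesssim j$ and $j\lesssim i$. By the definition of a valid preorder, an arrow $a\fra b$ forces $b<a$ (strict), and an arc forces incomparability; so along any anterior walk witnessing $i\lesssim j$, the presence of even one arrow would make the relation strict, contradicting $j\lesssim i$ by antisymmetry of the induced partial order (Proposition~\ref{prop:500}) applied to the classes. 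Thus the witnessing walk for $i\lesssim j$ can only be an undirected walk, i.e.\ a walk of lines, placing $i$ and $j$ in the same connected component of $G_{-}$. (This is the step where I expect the only real care is needed: one must argue cleanly that $i\sim j$ rules out any semi-directed segment, using that $<$ on classes is a strict partial order and that anteriors go ``downward''; part~1 of Proposition~\ref{prop:50} is the tool.) Since $G$ is acyclic (no semi-directed cycles) and, being an anterial graph in the relevant context, its line-induced subgraph has the structure of chain components, this matches the description of $G_{-}$'s connected components.

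Next I would verify the partial-order description. Proposition~\ref{prop:500} already tells us that $\leq$, defined on the set $Q$ of $\sim$-classes by $[\tau]\le[\delta]\iff a\lesssim b$ for representatives, is a genuine partial order; so the only thing to check is that it is correctly characterized by the arrow rule. If $i\fra j$ with $i\in\tau$, $j\in\delta$, then validity gives $j<i$, hence $\delta=[j]<[i]=\tau$, i.e.\ $\tau>\delta$; so every arrow of $G$ between two distinct line-components forces the stated strict inequality. Conversely, I would note that $\leq$ is the partial order \emph{generated} by these relations: whenever $[i]<[j]$ for the induced order, by Proposition~\ref{prop:50} there is a semi-directed path from $j$ to $i$ (the case $i\in\ant(j)$ with strictness), and such a path, after collapsing each maximal run of lines into its component, decomposes into a sequence of arrows between components, each contributing one step of the arrow rule; transitivity of $\leq$ then recovers $[i]<[j]$. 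The main obstacle, as flagged above, is the ``$\subseteq$'' direction of the equivalence-class claim — everything else is a direct unwinding of the definition of valid preorder together with Propositions~\ref{prop:500} and~\ref{prop:50}.
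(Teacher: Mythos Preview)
Your $\supseteq$ direction and the forward arrow-rule check are fine and are exactly what the paper's proof amounts to (the paper gives a one-line proof citing Propositions~\ref{prop:50} and~\ref{prop:500}). The problem is your $\subseteq$ direction. You write ``along any anterior walk witnessing $i\lesssim j$'', but nothing in the definition of a valid preorder guarantees that a relation $i\lesssim j$ is witnessed by a walk in $G$. A valid preorder is an \emph{abstract} preorder on $V$, constrained only at the edges of $G$; it need not coincide with the anterior relation. Concretely, take $G$ with two nodes $a,b$ and no edges: every preorder on $\{a,b\}$ is vacuously valid, so $a\sim b$ is allowed, yet $\{a\}$ and $\{b\}$ are distinct components of $G_{-}$. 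The same gap undercuts your converse for the arrow rule (that $[i]<[j]$ forces a semi-directed path from $j$ to $i$): Proposition~\ref{prop:50} only gives the forward implication.

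So the literal bidirectional reading you are trying to prove is not true for arbitrary valid preorders; what Propositions~\ref{prop:50} and~\ref{prop:500} actually deliver---and what the paper's proof invokes---is that each connected component of $G_{-}$ lies inside a single $\sim$-class (Proposition~\ref{prop:50}, part~2), that $\leq$ is a well-defined partial order on the quotient (Proposition~\ref{prop:500}), and that an arrow $i\fra j$ forces $[j]<[i]$ (directly from validity). Read the Corollary in that spirit: the preorder \emph{descends} to the line-components and the arrow rule holds there. The reverse inclusions you attempt only hold for the \emph{minimal} preorder, which the paper introduces immediately after this Corollary; your argument is implicitly assuming that case.
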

\begin{proof}
The results follows from Propositions \ref{prop:50} and \ref{prop:500}.
\end{proof}
For example, consider the graph in Fig.\ \ref{fig:ordex}. The graph admits a valid preorder with the preorder provided over the nodes, or equivalently over the connected components of $G_{-}$ (with the abuse of the notation for numbering). The notation implies two nodes with label $2$ are in the same equivalence class, $2>1$ and $2'>1'$, but $a$, $b'$, and $c''$, $a,b,c\in\{1,2\}$, are not comparable.
\begin{figure}[h]
\centering
\scalebox{0.65}{\includegraphics{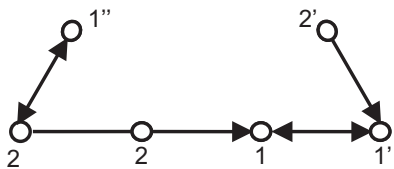}}
  \caption[]{\small{A preordered graph.}}
     \label{fig:ordex}
\end{figure}

There may be many different preorders that are valid for a graph. If there is a valid preordering and we expect the other direction of Proposition \ref{prop:50} to hold  then we obtain a unique preordering for the graph: Given a graph $G$, if $i\notin\ant(j)$ and $j\notin\ant(i)$ then set $i$ and $j$ to be incomparable. Otherwise, let $i\sim j$ when $i\ful j$, and $i>j$ when $i\fra j$. It is easy to see that this, in fact, is a preorder for the nodes of $G$. We call this preordering the \emph{minimal preorder} for $G$ since it gives the fewest possible comparable pairs of nodes. For example, the preordering in the graph of Fig.\ \ref{fig:ordex} is minimal. In this paper, we mostly deal with this type of preorderings for graphs. It is easy to observe the following:
\begin{proposition}\label{prop:600}
If $G$ is anterial then the minimal preorder for $G$ is a valid preorder for $G$.
\end{proposition}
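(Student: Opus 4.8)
The plan is to verify directly that the minimal preorder $\lesssim$ defined on the nodes of an anterial graph $G$ satisfies the three defining conditions of a valid preorder, namely: $i\ful j \Rightarrow i\sim j$; $i\fra j \Rightarrow j<i$; and $i\arc j \Rightarrow i,j$ incomparable. Recall the minimal preorder is defined by: $i$ and $j$ are incomparable unless $i\in\ant(j)$ or $j\in\ant(i)$, in which case $i\sim j$ if $i\ful j$ and $i>j$ if $i\fra j$. So first I would confirm the claim already made in the text that this relation is genuinely a preorder (reflexive and transitive); reflexivity is automatic from the convention, and transitivity of $\lesssim$ follows from transitivity of the anterior relation (concatenating anterior walks gives an anterior walk, using the walk-based definitions of Section~2.1). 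This groundwork lets me treat $\lesssim$ as a legitimate preorder with associated equivalence $\sim$.

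Next I would check the three conditions one at a time. If $i\ful j$, then since a single line is an (undirected, hence anterior) walk both from $i$ to $j$ and from $j$ to $i$, we have $i\in\ant(j)$ and $j\in\ant(i)$, so by definition of the minimal preorder $i\sim j$. If $i\fra j$ (arrow pointing from $i$ to $j$), then the single arrow is a semi-directed walk from $i$ to $j$, so $i\in\ant(j)$; by definition this forces $i>j$, i.e.\ $j<i$. The case $i\arc j$ is where the hypothesis that $G$ is anterial is essential: by definition of an anterial graph, an endpoint of an arc cannot be an anterior of the other endpoint, so $i\notin\ant(j)$ and $j\notin\ant(i)$; hence by the minimal preorder definition $i$ and $j$ are incomparable, as required.

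The only subtlety — and the place I expect to need a little care — is the arc case: I must ensure that the ``incomparable unless an anterior relation holds'' clause really does yield incomparability here, i.e.\ that there is no other edge or path forcing $i$ and $j$ into comparability. But this is immediate from the way the minimal preorder is defined: comparability holds \emph{only} when one node is an anterior of the other, and the anterial condition rules exactly this out for arc endpoints. (One should also note in passing that $G$ being anterial guarantees no arrow can coexist with a semi-directed path in the opposite direction, so the conditions for lines and arrows are mutually consistent and $\lesssim$ is well-defined — but this is already built into the preorder being well-defined, which we established at the start.) Since all three conditions hold, $\lesssim$ is a valid preorder for $G$, completing the proof.
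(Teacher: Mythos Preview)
Your argument is correct and is precisely the direct verification the paper has in mind --- the paper itself gives no proof, prefacing the proposition only with ``It is easy to observe the following.'' One small tightening: in the arrow case $i\fra j$, to conclude the \emph{strict} relation $j<i$ you need not only $i\in\ant(j)$ but also $j\notin\ant(i)$; this holds because any anterior walk from $j$ to $i$ (whether semi-directed or purely undirected) concatenated with the arrow $i\fra j$ would form a semi-directed cycle, contradicting that anterial graphs are CMGs --- your closing parenthetical gestures at this, but it is worth making explicit since that is where acyclicity actually enters.
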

In fact, in general, we have the following:
\begin{proposition}\label{prop:60}
A graph admits a valid preorder if and only if it is anterial.
\end{proposition}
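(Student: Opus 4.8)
The statement to prove is Proposition \ref{prop:60}: a graph admits a valid preorder if and only if it is anterial. Let me sketch how I would attack both directions.

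The plan is to prove the two implications separately, leaning on Propositions~\ref{prop:50} and~\ref{prop:600}, which between them do essentially all of the work. For the ``if'' direction, suppose $G$ is anterial. Then by Proposition~\ref{prop:600} the minimal preorder for $G$ is a valid preorder for $G$; in particular $G$ admits a valid preorder, and nothing further is needed here beyond invoking Proposition~\ref{prop:600}.

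For the ``only if'' direction I would argue by contraposition. Recall that in this paper ``graph'' means a CMG, so $G$ has no semi-directed cycles; hence if $G$ fails to be anterial, the only possible reason is that there is an arc $i\arc j$ in $G$ with one endpoint an anterior of the other, say $i\in\ant(j)$ (the case $j\in\ant(i)$ is symmetric). Note this single clause already covers the multiple-edge situations allowed in a CMG: if $i$ and $j$ are joined by an arc together with a line, or by an arc together with an arrow, then in either case $i\in\ant(j)$ or $j\in\ant(i)$. Now suppose, toward a contradiction, that $G$ nonetheless admits a valid preorder $\lesssim$. On the one hand, validity applied to the edge $i\arc j$ forces $i$ and $j$ to be incomparable. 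On the other hand, since $i\in\ant(j)$, Proposition~\ref{prop:50} gives $j\lesssim i$, so $i$ and $j$ are comparable. This contradiction shows $G$ admits no valid preorder, which is exactly what contraposition required.

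I expect no serious obstacle here: once Propositions~\ref{prop:50} and~\ref{prop:600} are available, the argument reduces to a two-line incompatibility between ``incomparable'' (demanded by an arc under a valid preorder) and ``$j\lesssim i$'' (forced by anteriority via Proposition~\ref{prop:50}). The only point worth being careful about is the standing convention that ``graph'' abbreviates CMG, so that absence of semi-directed cycles comes for free and the failure of anteriality can be localized to a single arc; it is worth remarking in passing that a valid preorder would in any case preclude semi-directed cycles, since a semi-directed cycle through a node $v$ would yield $v<v$ by Proposition~\ref{prop:50}(1), contradicting irreflexivity of $<$, so the equivalence is genuinely between ``admits a valid preorder'' and ``is anterial'' with no acyclicity tacitly smuggled in.
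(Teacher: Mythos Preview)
Your proposal is correct and follows essentially the same approach as the paper: the ``if'' direction is exactly Proposition~\ref{prop:600}, and the ``only if'' direction uses Proposition~\ref{prop:50} to derive a contradiction between the incomparability forced by an arc and the comparability forced by anteriority. The paper's own proof is simply a terser two-sentence version of your argument, mentioning in passing (as you also do) that a valid preorder precludes directed cycles.
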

\begin{proof}
If a graph $G$ admits a valid preorder then by Proposition \ref{prop:50}, there cannot be a directed cycle, nor can there be an arc with one endpoint anterior of the other. The converse is Proposition \ref{prop:600}.
\end{proof}
Therefore,  in addition to CMGs, SGs (and ADMGs) do not admit a valid preorder, but AGs do. However, notice that for every CMG, there exists a Markov equivalent AnG (see \citealt{sad16}); and the same for SGs and AG. Hence, the question of whether there is a CMG that is faithful to $\mathcal{J}$ is the same as  whether there is an AnG that is faithful to $\mathcal{J}$; and the same holds for SG and AG.

As discussed above, there is a one-to-one correspondence between AnGs and the minimal preorder for the AnGs. When the skeleton of the concerned graph is known, we have the following trivial result:
\begin{proposition}\label{prop:00}
Given the skeleton of a graph $G$ and a preorder $\lesssim $ of its nodes, $G$ can be constructed uniquely by directing the edges of $\sk(G)$. In addition, $\lesssim$ is a valid preorder for $G$.
\end{proposition}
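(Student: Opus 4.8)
The plan is to describe the directing procedure explicitly, to verify that it makes $\lesssim$ a valid preorder for the resulting graph, and to observe that both the procedure and its output are forced by the data. First I set up the construction: fix the given skeleton $\sk(G)$ together with the preorder $\lesssim$ and its induced equivalence relation $\sim$. For any two distinct nodes $i,j$ exactly one of the following holds --- $i\sim j$ (both $i\lesssim j$ and $j\lesssim i$), $i<j$, $j<i$, or $i$ and $j$ incomparable (neither $i\lesssim j$ nor $j\lesssim i$) --- these four cases being mutually exclusive and exhaustive directly from the definition of a preorder. For each edge of $\sk(G)$ between $i$ and $j$ I then direct it according to which case applies: a line $i\ful j$ when $i\sim j$; an arrow with its arrowhead at the $\lesssim$-smaller endpoint when $i,j$ are comparable but not equivalent; an arc $i\arc j$ when $i,j$ are incomparable. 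Since $\sk(G)$ is a simple graph and each of its edges is replaced by exactly one mixed edge, this produces a well-defined simple mixed graph $G$ with skeleton $\sk(G)$, and as the procedure is deterministic, $G$ is the unique graph it yields.

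Next I check that $\lesssim$ is a valid preorder for $G$ by running through the three defining conditions. If $i\ful j$ in $G$, the line was drawn because $i\sim j$. If $i\fra j$ in $G$, the endpoints are comparable and non-equivalent and the arrowhead was placed at the smaller one, which is $j$, so $j<i$. If $i\arc j$ in $G$, the arc was drawn because $i$ and $j$ are incomparable. Hence $\lesssim$ is valid for $G$, and then Proposition~\ref{prop:60} gives that $G$ is anterial; in particular $G$ has no semi-directed cycles, so it genuinely belongs to the class of graphs under consideration.

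For the uniqueness of the directing itself, suppose $H$ is a graph with $\sk(H)=\sk(G)$ for which $\lesssim$ is valid. Then for each edge the validity conditions force its type --- a line when the endpoints are $\sim$-equivalent, an arc when they are incomparable, an arrow otherwise --- and for an arrow force the arrowhead to sit at the $\lesssim$-smaller endpoint. Since the four cases for a pair of nodes are mutually exclusive, every edge of $H$ is oriented exactly as in the construction, whence $H=G$. The only point needing any care throughout is the bookkeeping of the four cases together with the ``arrowhead at the smaller node'' convention; I anticipate no genuine obstacle, which matches the paper's billing of the statement as trivial.
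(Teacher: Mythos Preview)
Your proposal is correct; the paper gives no proof at all, labeling the result ``trivial,'' and your explicit case-by-case directing rule together with the uniqueness check is exactly the natural way to fill in that triviality. The invocation of Proposition~\ref{prop:60} to confirm the constructed graph is anterial (hence a CMG) is a nice touch the paper leaves implicit.
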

Let $\mathcal{J}$ be defined over $V$. We introduce preordering in this paper in order to stress that, in principle, a preordering of members of $V$ (which are possibly random variables) is defined irrespective of graphs to which they are Markov or faithful. However, for our purposes, knowing the skeleton of such graphs, one can directly discover the directions of the edges of the graph, which correspond to the minimal preordering for the graph, rather than working directly on the set $V$.

Suppose that there exists a preorder $\lesssim$ over $V$. Proposition \ref{prop:00} implies that we can direct the edges of $\sk(\mathcal{J})$ based on $\lesssim$ in order to define a \emph{dependence graph} $G(\mathcal{J},\lesssim)$ induced by $\mathcal{J}$ and $\lesssim$. Notice that, by Proposition \ref{prop:60}, $G(\mathcal{J},\lesssim)$ is anterial. Similarly, for $P$, one can define $G(P,\lesssim)=G(\mathcal{J}(P),\lesssim)$.

As mentioned, we are only interested in preorderings that are minimal preorders of graphs. Given $\mathcal{J}$, we define a preordering $\lesssim$ to be \emph{$\mathcal{J}$-compatible} if $\lesssim$ is the minimal preorder for $G(\mathcal{J},\lesssim)$. Similarly, one can define a $P$-compatible preorder.

\subsection{Ordered Upward- and Downward-stabilities} We now exploit the preodering for independence models in order to define two other properties in addition to the seven properties defined in Section \ref{sec:propind} (namely singleton-transitive compositional graphoid axioms). We say that an independence model $\mathcal{J}$ over the set $V$ respectively satisfies \emph{ordered upward-} and \emph{downward-stability} w.r.t.\ a preorder $\lesssim$ of $V$ if the following hold:
  \begin{itemize}
	\item[8.] if $\langle i,j\cd C\rangle\in \mathcal{J}$  then $\langle i,j\cd C\cup\{k\}\rangle\in \mathcal{J}$ for every $k\in V\setminus\{i,j\}$ such that $l\lesssim k$ for some $l\in \{i,j\}$ or $l\sim k$ for some $l\in C$ (ordered upward-stability);
	\item[9.] if $\langle i,j\cd C\rangle\in \mathcal{J}$ then $\langle i,j\cd C\setminus\{k\}\rangle\in \mathcal{J}$ for every $k\in V\setminus\{i,j\}$ such that $l\not\lesssim k$ for every $l\in \{i,j\}$ and $l\not< k$ for every $l\in C\setminus\{k\}$ (ordered downward-stability).
\end{itemize}
Ordered upward-stability is a generalization of a modification of \emph{upward stability}, defined in \citet{fal15}, and \emph{strong union}, defined in \citet{pea85}, for undirected graphs, with singletons instead of node subsets.
\begin{proposition}\label{prop:3}
Let $G$ be an AnG and $\lesssim$ the minimal preorder for $G$. Then $\mathcal{J}(G)$ satisfies ordered upward- and downward-stability w.r.t.\ $\lesssim$.
\end{proposition}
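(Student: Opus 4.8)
The plan is to work entirely on the level of separation, i.e.\ of connecting walks, and to prove the contrapositive of each implication by surgery on walks. Since $G$ is anterial, the minimal preorder $\lesssim$ is a valid preorder by Proposition~\ref{prop:600}, so by Proposition~\ref{prop:50} and its corollary I may read off that, for the minimal preorder, $i\lesssim k$ amounts to $k\in\ant(i)\cup\{i\}$ and $i\sim k$ amounts to $i$ and $k$ lying in a common connected component of the subgraph of $G$ induced by lines; note that every section of a walk lies inside one such line-component. In both parts I would take the relevant connecting walk $\pi$ to have minimum length, hence to be a path, so that each node — and in particular $k$ — lies on at most one section of $\pi$; and I may assume $k\notin C$ for upward- and $k\in C$ for downward-stability, the other case being vacuous.

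\emph{Ordered upward-stability.} Suppose $\pi$ connects $i$ and $j$ given $C\cup\{k\}$ but no walk connects them given $C$. Then $\pi$ itself fails to connect given $C$, and since every non-collider section of $\pi$ is disjoint from $C\cup\{k\}$, the only possible cause is a collider section $\rho$ of $\pi$ with $k\in\rho$ and $\rho\cap C=\varnothing$. Now I split on the hypothesis on $k$. If $k\sim l$ for some $l\in C$, then $l$ lies in the line-component of $\rho$, and I insert into $\rho$, at $k$, a detour of lines from $k$ out to $l$ and back; this replaces $\rho$ by a collider section meeting $C$, changes no other section, and so yields a walk connecting given $C$, contradicting $\langle i,j\cd C\rangle\in\mathcal{J}(G)$. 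Otherwise $l\lesssim k$ for some $l\in\{i,j\}$, say $i\lesssim k$, so $k\in\ant(i)$, and moreover the line-component of $k$ is now disjoint from $C$ (the previous case having failed). Let $v$ be the exit node of $\rho$ on the $j$-side of $\pi$. If $k\sim i$, then $i$, $v$ and $k$ share this $C$-free line-component, and I replace the $i$-to-$v$ part of $\pi$ by a line-path inside it, turning the former dead collider into an ordinary initial (hence non-collider) section disjoint from $C$; if $k\not\sim i$ then $i<k$, and I take a shortest semi-directed path from $k$ to $i$ and splice it in reverse in front of the $k$-to-$j$ tail of $\pi$, the tail it presents at $k$ making the section at $k$ a non-collider.

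\emph{Ordered downward-stability.} Suppose $\pi$ connects $i$ and $j$ given $C\setminus\{k\}$ but no walk connects them given $C$. Every collider section of $\pi$ already meets $C\setminus\{k\}\subseteq C$, so the cause must be a non-collider section $\sigma$ with $k\in\sigma$ and $\sigma\cap(C\setminus\{k\})=\varnothing$. The hypothesis $l\not\lesssim k$ for $l\in\{i,j\}$ forces $\sigma$ to be neither the first nor the last section of $\pi$ — otherwise $i\sim k$ or $j\sim k$ — so $\sigma$ is an inner section bounded by two non-line edges that are not both arrowheads into $\sigma$; hence one boundary is an arrow $x\fra y$ leaving $\sigma$ with $x\in\sigma$ (so $x\sim k$). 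The plan is to reroute $\pi$ through the line-component of $\sigma$ so as to bypass $k$ while retaining $x\fra y$ as the exit; the remaining hypothesis, $l\not<k$ for all $l\in C\setminus\{k\}$ — i.e.\ no semi-directed path runs from $k$ into the conditioning set — is what guarantees the rerouted line-stretch can be chosen disjoint from $C$, and that, when it cannot, the obstructing node of $C$ lies in the line-component of $\sigma$ and can be absorbed.

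The step I expect to be the real obstacle, in both parts, is that the auxiliary walks I splice in — a line-path inside a component, or a semi-directed path from $k$ to $i$ — may themselves pass through nodes of $C$, and a node of $C$ on a non-collider section of the new walk destroys the connecting property. I anticipate handling this by an induction (on the length of $\pi$, or on $|C|$): if a spliced-in walk is obstructed by some $c\in C$, one shows that this $c$ either lies in the line-component of the dead collider (upward) — reducing to the easy sub-case — or permits $\pi$ to be shortened (downward), in either case invoking the induction hypothesis. A second, purely bookkeeping hurdle is to verify at each splice that the two incident edges combine into the section I claim (two lines into one larger section, an arrow or arc into the singleton $\{k\}$, and so on) and that the connecting condition holds there. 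As an alternative organisation, one can exploit that $\mathcal{J}(G)$ is a compositional graphoid (Proposition~\ref{prop:110}): upward-stability reduces to showing that $\langle i,j\cd C\rangle\in\mathcal{J}(G)$ together with the hypothesis forces $\langle i,k\cd C\rangle\in\mathcal{J}(G)$ or $\langle j,k\cd C\rangle\in\mathcal{J}(G)$ (then compose and apply weak union), and downward-stability to the analogous statement with conditioning set $C\setminus\{k\}$ (then apply contraction and decomposition); this isolates the same walk surgery in a more modular form.
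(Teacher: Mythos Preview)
Your upward-stability argument is essentially the paper's: the walk surgery you describe (a line detour when $k\sim l$ for some $l\in C$; splicing an anterior path when $l\lesssim k$ for some $l\in\{i,j\}$) is exactly the right mechanism. The paper, however, dispatches the obstacle you flag --- the spliced anterior path $\varpi$ from $k$ to $l$ meeting $C$ --- by a direct construction rather than an induction: take the \emph{first} node $h\in C$ along $\varpi$ and insert the detour $\varpi[k,h]$ followed by its reverse between the two halves of $\pi$ at $k$. If $\varpi[k,h]$ contains an arrow, both occurrences of $k$ sit on non-collider sections and $h$ is a collider in $C$; if $\varpi[k,h]$ is all lines, $h$ is absorbed into the collider section of $k$ and supplies the missing $C$-node. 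Your proposed reduction (``$c$ lies in the line-component of the dead collider'') covers only the second of these sub-cases, so your induction sketch would not close as stated.

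The substantive gap is in downward-stability. You try to \emph{bypass} $k$ by rerouting inside the line-component of its non-collider section $\sigma$, but this need not be possible: if $\sigma=\{k\}$ there is nothing to reroute through, and in general $k$ may be a cut vertex of that component separating the $i$-side entry from your exit $x$. The hypothesis you invoke bars only \emph{semi-directed} paths from $k$ into $C\setminus\{k\}$, not line paths, so it does not rescue the reroute. The paper's key observation is that no rerouting is needed because this case is vacuous: from the side of $\sigma$ with no incoming arrowhead, walk along $\pi$ itself --- lines, then an outgoing arrow, then the next section, and so on --- until you hit either $j$ or a collider section (which meets $C\setminus\{k\}$). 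Either way you have built an anterior path from $k$ to some $l\in\{i,j\}\cup(C\setminus\{k\})$, contradicting $l\not\lesssim k$ or $l\not<k$. Hence $\pi$, unchanged, already connects given $C$.
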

\begin{proof}
First we prove that $\mathcal{J}(G)$ satisfies ordered upward-stability w.r.t.\ the minimal preorder: If $k\in C$, the result is obvious, thus suppose that $k\notin C$. Suppose that there is a connecting walk $\pi$ between $i$ and $j$ given $C\cup \{k\}$. If $k$ is not on $\pi$ then we are done; otherwise, $k$ is on a collider section with no other node in $C$ on $\pi$. Notice that $l\lesssim k$ means here that $k\in\ant(l)$, and $l\sim k$ that $k$ and $l$ are connected by a path consisting of lines.

If $l\in C$ then denote the path of lines between $k$ and $l$ by $\omega$. The walk consisting of the subwalk of $\pi$ from $i$ to $k$, $\omega$, $\omega$ in the reverse direction (from $l$ to $k$), and the subwalk of $\pi$ from $k$ to $j$ is connecting given $C$. This is because on this walk 
$k$ and $l$ are in the same collider section with $l$ in $C$.

Now denote an anterior path from $k$ to $l\in \{i,j\}$ by $\varpi$. Without loss of generality we can assume that $l=j$. We now consider two cases: 1) If $\varpi$ has no nodes in $C$ then the walk containing the subwalk of $\pi$ between $i$ and $k$ and $\varpi$ is connecting given $C$ between $i$ and $j$ since $k$ and all nodes on $\varpi$ are on non-collider sections on this walk. 2) If $\varpi$ has a node in $C$ then consider the one closest to $k$ and call it $h$. Then the walk consisting of the subwalk of $\pi$ from $i$ to $k$, the subpath of $\varpi$ from $k$ to $h$, the reverse of the subpath from $h$ to $k$, and the subwalk of $\pi$ from $k$ to $j$ is connecting given $C$ as proven as follows: 1) if there is an arrow on $\varpi$ then $k$ is on a non-collider section in both instances in which it appears on this walk, and $h$ is on a collider section and in $C$. 2) if $\varpi$ only consists of lines then $k$ and $h$ are in the same collider section with a node ($h$) in $C$.

Now we prove that $\mathcal{J}(G)$ satisfies ordered downward-stability: If $k\notin C$ then the result is obvious, thus suppose that $k\in C$. Suppose that there is a connecting walk $\pi$ between $i$ and $j$ given $C\setminus\{k\}$. If $k$ is not on $\pi$ then we are done. Otherwise, first it is clear that since $\pi$ is connecting given $C\setminus\{k\}$, $k$ cannot be the only node on a collider section that is in $C$.

If $k$ is on the same section as another member of $C$ then $\pi$ is clearly connecting given $C$. The only case that is left is when $k$ is on a non-collider section on $\pi$; but this is impossible: there is no arrowhead at the section containing $k$ from one side on $\pi$, say from the
$j$ side. By moving towards $j$ from $k$, it can be implied that $k$ is either an anterior of a node in a collider section that is in $C$, or an anterior of $j$, but both are impossible since the former implies $l< k$ for $l\in C$, and the latter implies $j\lesssim k$.
\end{proof}

\section{Characterization of Graphical Independence Models}\label{sec:charac}
In this section, we first provide sufficient conditions for minimal Markovness, and then necessary and sufficient conditions for faithfulness.
\subsection{Sufficient Conditions for Minimal Markovness}\label{sec:mark}
Here we provide sufficient conditions for an independence model to be minimally Markov to a graph. First, we have the following trivial result:
\begin{proposition}\label{prop:minord}
If $\mathcal{J}$ is minimally Markov to an anterial graph $G$ then $G=G(\mathcal{J},\lesssim)$ for the minimal preorder $\lesssim$ for $G$.
\end{proposition}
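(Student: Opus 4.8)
The plan is to show the two graphs $G$ and $G(\mathcal{J},\lesssim)$ are equal by showing they have the same skeleton and the same directed structure (which, given a common skeleton, is determined by the preorder via Proposition \ref{prop:00}). First I would invoke the definition of minimal Markovness: since $\mathcal{J}$ is minimally Markov to $G$, we have $\sk(G)=\sk(\mathcal{J})$. By construction, $G(\mathcal{J},\lesssim)$ is obtained by directing the edges of $\sk(\mathcal{J})$ according to $\lesssim$, so in particular $\sk\big(G(\mathcal{J},\lesssim)\big)=\sk(\mathcal{J})=\sk(G)$. Thus $G$ and $G(\mathcal{J},\lesssim)$ share a common skeleton.

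Next I would use the fact that $\lesssim$ is the minimal preorder for $G$. By Proposition \ref{prop:600}, since $G$ is anterial, this minimal preorder is a valid preorder for $G$; and by Proposition \ref{prop:00}, given the skeleton $\sk(G)$ together with the preorder $\lesssim$, the graph is reconstructed uniquely by directing the edges of $\sk(G)$ in the manner prescribed by $\lesssim$ (lines between $\sim$-equivalent nodes, arrows from the larger to the smaller comparable node, arcs between incomparable nodes). Applying this same directing procedure to $\sk(\mathcal{J})=\sk(G)$ with the same preorder $\lesssim$ is precisely the definition of $G(\mathcal{J},\lesssim)$. Since the skeletons agree and the directing rule is a function of the (common) skeleton and $\lesssim$ alone, the two resulting graphs coincide: $G=G(\mathcal{J},\lesssim)$.

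The only point needing a little care — and what I would flag as the main (if minor) obstacle — is the hidden consistency check that the edge directions already present in $G$ really do match what the minimal preorder $\lesssim$ prescribes, i.e.\ that $G$ is recovered from its own skeleton and its own minimal preorder. This is exactly the content invoked through Propositions \ref{prop:600} and \ref{prop:00}: validity of the minimal preorder (anteriality of $G$) guarantees that $i\ful j$ forces $i\sim j$, $i\fra j$ forces $j<i$, and $i\arc j$ forces incomparability, so the reconstruction step cannot produce any edge type other than the one already in $G$. Since the statement permits us to assume these earlier results, the argument is essentially immediate once the skeletons are identified; hence the proposition is ``trivial'' in the sense the paper claims.
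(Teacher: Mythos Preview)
Your proposal is correct and is exactly the unpacking of why the paper calls this result ``trivial'' (the paper gives no explicit proof at all): identify the skeletons via the definition of minimal Markovness, then observe that directing $\sk(G)$ by the minimal preorder for $G$ must return $G$ because that preorder is valid for $G$. Your flagged ``consistency check'' is the only substantive step, and you handle it correctly via Propositions~\ref{prop:600} and~\ref{prop:00}.
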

By Proposition \ref{prop:01}, the above statement also holds when $\mathcal{J}$ and $G$ are faithful.
\begin{proposition}\label{prop:80}
Suppose that there exists a $\mathcal{J}$-compatible preorder $\lesssim$ over $V$ w.r.t.\ which $\mathcal{J}$ satisfies ordered downward- and upward-stability. It then holds that $\mathcal{J}$ is pairwise Markov to $G(\mathcal{J},\lesssim)$.
\end{proposition}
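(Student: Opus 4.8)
The goal is to show that for every non-adjacent pair $i,j$ in $G = G(\mathcal{J},\lesssim)$, the pairwise independence statement $\langle i,j\cd C(i,j)\rangle$ belongs to $\mathcal{J}$, where $C(i,j) = \ant(i)\cup\ant(j)\setminus\{i,j\}$ with anteriors computed in $G$. The plan is to start from the defining property of the skeleton: since $i,j$ are non-adjacent in $G$, they are non-adjacent in $\sk(\mathcal{J}) = \sk(G)$, so by construction of $\sk(\mathcal{J})$ there exists \emph{some} set $C \subseteq V\setminus\{i,j\}$ with $\langle i,j\cd C\rangle\in\mathcal{J}$. The task is then to transform this arbitrary $C$ into the specific conditioning set $C(i,j)$ by repeatedly applying ordered upward- and downward-stability, exactly as one converts a separating set into the canonical ancestral separating set for graphs.

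First I would use ordered downward-stability to remove from $C$ every node $k$ that is ``too high'' or ``off to the side'' relative to $i,j$ — more precisely, I would argue that any $k\in C$ with $l\not\lesssim k$ for both $l\in\{i,j\}$ and $l\not< k$ for all remaining $l\in C$ can be deleted while preserving membership in $\mathcal{J}$; iterating this peels $C$ down until every surviving element is an anterior of $i$ or $j$, or is forced to stay by a chain of $<$-relations leading to such an anterior. Here I would need the compositional-graphoid axioms (in particular to handle the bookkeeping as $C$ shrinks, since the downward-stability condition references the current set $C\setminus\{k\}$) and the fact that $\lesssim$ is the minimal preorder for $G$, so that $l\lesssim k$ and $l<k$ translate precisely into $k\in\ant(l)$ statements in $G$. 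The outcome of this phase is $\langle i,j\cd C'\rangle\in\mathcal{J}$ with $C'\subseteq \ant(i)\cup\ant(j)\setminus\{i,j\}$.

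Next I would use ordered upward-stability to add in the missing anteriors: for any $k\in \big(\ant(i)\cup\ant(j)\big)\setminus\big(C'\cup\{i,j\}\big)$, we have $l\lesssim k$ or $k\lesssim l$ for some $l\in\{i,j\}$ — and since $k$ is an anterior of $i$ or $j$ in the minimal preorder, the relevant direction $k\lesssim l$ (or $k\sim l$) holds, which is exactly the trigger for ordered upward-stability; so $k$ can be adjoined to the conditioning set. Iterating over all such $k$ yields $\langle i,j\cd C(i,j)\rangle\in\mathcal{J}$, which is the pairwise Markov property. I would close by noting that $G(\mathcal{J},\lesssim)$ need not be maximal, but the pairwise Markov property with conditioning set $\ant(i)\cup\ant(j)\setminus\{i,j\}$ is exactly what is claimed, so no maximality is required at this stage.

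The main obstacle I anticipate is the inductive control of the downward-stability phase: ordered downward-stability only lets you drop $k$ when the side-condition holds relative to the \emph{current} set $C\setminus\{k\}$, so one must choose the order of removals carefully (e.g. remove a $\lesssim$-maximal removable element first) and verify that no removal ever destroys the removability of another intended removal, or equivalently set up a well-founded induction on $|C|$ together with the partial order on equivalence classes from Proposition \ref{prop:500}. A secondary subtlety is translating the abstract preorder conditions ($l\lesssim k$, $l<k$, $l\sim k$) into the graph $G(\mathcal{J},\lesssim)$ via minimality of the preorder, so that ``anterior in $G$'' and ``$\lesssim$-below'' coincide on the nose; this is where Propositions \ref{prop:50}, \ref{prop:600}, and the $\mathcal{J}$-compatibility hypothesis do the work.
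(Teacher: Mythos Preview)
Your plan is essentially the paper's own proof: start from some $C$ with $\langle i,j\cd C\rangle\in\mathcal{J}$, use ordered downward-stability (iterated, choosing removable nodes in the right order via acyclicity) to shrink $C$ down to $C\cap\ant(\{i,j\})$, then use ordered upward-stability to grow back up to $\ant(\{i,j\})\setminus\{i,j\}$.

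Two small corrections. First, you do \emph{not} need the compositional-graphoid axioms anywhere in this argument; ordered downward- and upward-stability alone suffice, and the paper's proof invokes nothing else (the compositional-graphoid hypothesis enters only later, in Theorem~\ref{thm:0}, to pass from pairwise to global). Second, your direction in the upward-stability step is flipped: if $k\in\ant(i)$ then Proposition~\ref{prop:50} gives $i\lesssim k$, i.e.\ $l\lesssim k$ with $l\in\{i,j\}$, which is exactly the trigger in property~(8); you wrote ``the relevant direction $k\lesssim l$'', which is backwards.
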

\begin{proof}
Suppose that there are non-adjacent nodes $i$ and $j$ in $G(\mathcal{J},\lesssim)$. These are non-adjacent in $\sk(\mathcal{J})$ too. By definition of $\sk(\mathcal{J})$, it holds that $\langle i,j\cd C\rangle\in \mathcal{J}$ for some $C\subseteq V\setminus\{i,j\}$.

Since $\mathcal{J}$ satisfies ordered downward-stability and since the preorder is $\mathcal{J}$-compatible, as long as a node $k\in C$ is not an anterior of $\{i,j\}$ or  there is no semi-directed path from it to $C\setminus\{k\}$, it can be removed from the conditioning set, that is we have that $\langle i,j\cd C\setminus k\rangle\in \mathcal{J}$. Since $G(\mathcal{J},\lesssim)$ is acyclic, as long as $\ant(\{i,j\})\subset C$, such a $k$ exists.  Now consider a node $k'\in C\setminus\{k\}$ that is not an anterior of $\{i,j\}$ or there is no semi-directed path from it to $C\setminus\{k,k'\}$ and apply downward-stability again. By an inductive argument we imply  that $\langle i,j\cd C\cap\ant(\{i,j\})\rangle\in \mathcal{J}$.
Now since $P$ satisfies ordered upward-stability, nodes outside $C$ that are in $\ant(\{i,j\})$ can be added to the conditioning set. Hence, it holds that $\langle i,j\cd \ant(\{i,j\})\setminus\{i,j\}\rangle\in \mathcal{J}$. This completes the proof.
\end{proof}
In principle, a compatible preorder w.r.t.\ which $\mathcal{J}$ satisfies ordered downward- and upward-stability can be found by going through all such preorderings. Finding such a preorder efficiently is a structural learning task. We believe we have an efficient way to do this when the skeleton of $\mathcal{J}$ has been learned, but this is beyond the scope of this paper.
\begin{theorem}\label{thm:0}
Suppose, for an independence model $\mathcal{J}$ over $V$, that
\begin{enumerate}
  \item $\mathcal{J}$ is a compositional graphoid; and
  \item  there exists a $\mathcal{J}$-compatible preorder $\lesssim$ over $V$ w.r.t.\ which $\mathcal{J}$ satisfies ordered downward- and upward-stability.
\end{enumerate}
It then holds that $\mathcal{J}$ is minimally Markov to $G(\mathcal{J},\lesssim)$.
\end{theorem}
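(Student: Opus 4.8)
The plan is to derive Theorem \ref{thm:0} more or less immediately from the machinery already assembled, by combining three facts: that $\mathcal{J}$ is pairwise Markov to $G(\mathcal{J},\lesssim)$, that $G(\mathcal{J},\lesssim)$ is a maximal anterial graph, and that Proposition \ref{prop:114} upgrades the pairwise Markov property to the global one for maximal graphs under compositional graphoids. Concretely, first I would invoke Proposition \ref{prop:80}: since hypothesis 2 gives a $\mathcal{J}$-compatible preorder w.r.t.\ which $\mathcal{J}$ satisfies ordered downward- and upward-stability, $\mathcal{J}$ is pairwise Markov to $G:=G(\mathcal{J},\lesssim)$. Next, note that by construction $\sk(G)=\sk(\mathcal{J})$ and, by the remark following Proposition \ref{prop:60} together with Proposition \ref{prop:00}, $G$ is anterial and $\lesssim$ is its minimal preorder.

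The step that needs a little care is the maximality of $G(\mathcal{J},\lesssim)$, since Proposition \ref{prop:114} has ``maximal graph'' as a hypothesis. Here the point is essentially definitional: the pairwise Markov property with conditioning set $C(i,j)=\ant(i)\cup\ant(j)\setminus\{i,j\}$ holds for $\mathcal{J}$ relative to $G$ (that is what Proposition \ref{prop:80} delivers, reading its proof), and a graph is maximal precisely when every missing edge corresponds to a separation statement of this form; so I would argue that the existence of such separating sets in $\mathcal{J}$, combined with the fact that $G$ is built to have exactly the non-edges recorded in $\sk(\mathcal{J})$, forces $G$ to be maximal. If invoking maximality is felt to be delicate, an alternative is to observe that anterial graphs obtained as $G(\mathcal{J},\lesssim)$ are automatically maximal because their non-edges are defined by the existence of separating sets — this is the cleanest route and I expect it to be the only genuine wrinkle.

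With maximality in hand, Proposition \ref{prop:114} applies: $\mathcal{J}$ is a compositional graphoid (hypothesis 1) and pairwise Markov to the maximal graph $G$, hence $\mathcal{J}$ is Markov to $G$, i.e.\ $\mathcal{J}(G)\subseteq\mathcal{J}$. Finally, since $\sk(G)=\sk(G(\mathcal{J},\lesssim))=\sk(\mathcal{J})$ by construction, $\mathcal{J}$ is not merely Markov but \emph{minimally} Markov to $G(\mathcal{J},\lesssim)$, which is the claim. The main obstacle, then, is not any hard inequality or combinatorial construction but rather pinning down why $G(\mathcal{J},\lesssim)$ qualifies as a maximal graph so that Proposition \ref{prop:114} may be cited; everything else is assembly of already-proven propositions.
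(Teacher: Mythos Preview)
Your approach is exactly the paper's: invoke Proposition \ref{prop:80} for pairwise Markovness, use $\sk(G(\mathcal{J},\lesssim))=\sk(\mathcal{J})$, and apply Proposition \ref{prop:114} to pass from pairwise to global. The paper's own proof is a single sentence citing precisely these three ingredients and says nothing further.

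You are more careful than the paper in flagging the maximality hypothesis of Proposition \ref{prop:114}, but your proposed resolutions are not sound. Maximality of $G$ means that every non-adjacent pair $i,j$ is \emph{separated in $G$} by some set; it is a statement about $\mathcal{J}(G)$, not about $\mathcal{J}$. Your arguments instead appeal to the existence of $C$ with $\langle i,j\cd C\rangle\in\mathcal{J}$, or to how the non-edges of $G$ were defined via $\sk(\mathcal{J})$; neither yields a separating set in $G$, and anterial (even ancestral) graphs are not automatically maximal. The clean fix is different from what you suggest: the direction of Proposition \ref{prop:114} actually used here---pairwise implies global for a compositional graphoid---does not rely on maximality (maximality is needed only for the converse, to ensure $\mathcal{J}(G)$ itself satisfies the pairwise property). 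So the gap you spotted is real as the proposition is stated, but it closes for a reason other than the one you give.
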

\begin{proof}
The proof follows from Proposition \ref{prop:80}, the fact that $\sk(\mathcal{J})=\sk(G(\mathcal{J},\lesssim))$, and Proposition \ref{prop:114}.
\end{proof}

\subsection{Conditions for Faithfulness}\label{sec:faith}
In this section, we present the main result of this paper, which provides necessary and sufficient conditions for faithfulness of independence models (and probability distributions) and a (not necessarily known) graph in the general case. In Section \ref{sec:spec}, we specialize the result to the more well-known subclasses as corollaries.

\begin{proposition}\label{prop:10}
Let $\mathcal{J}$ be an independence model over $V$. Suppose that $\mathcal{J}$ is  minimally Markov to an anterial graph $G$. It then holds that $\mathcal{J}$ and $G$ are faithful if and only if
\begin{enumerate}
  \item $\mathcal{J}$ satisfies singleton-transitivity; and
  \item $\mathcal{J}$ satisfies ordered downward- and upward-stability w.r.t.\ the minimal preorder for $G$.
\end{enumerate}
\end{proposition}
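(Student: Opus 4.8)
\textit{Proof proposal.} The necessity of conditions~1 and~2 is immediate and I would dispose of it in a sentence: if $\mathcal{J}$ and $G$ are faithful then $\mathcal{J}=\mathcal{J}(G)$, so singleton-transitivity holds by Proposition~\ref{prop:1}; and since $G$ is anterial its minimal preorder $\lesssim$ is a valid preorder by Proposition~\ref{prop:600}, so ordered upward- and downward-stability w.r.t.\ $\lesssim$ hold by Proposition~\ref{prop:3}. The real content is the converse.

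So assume conditions~1 and~2. Since $\mathcal{J}$ is Markov to $G$ we already have $\mathcal{J}(G)\subseteq\mathcal{J}$, and it remains to prove $\mathcal{J}\subseteq\mathcal{J}(G)$, i.e.\ that $\langle A,B\cd C\rangle\in\mathcal{J}$ forces $A\dse B\cd C$ in $G$. The first move is to reduce to singletons: splitting $\langle A,B\cd C\rangle$ by decomposition into the statements $\langle a,b\cd C\rangle\in\mathcal{J}$ for $a\in A$, $b\in B$, applying the singleton case, and recombining by composition in $\mathcal{J}(G)$ (which is a compositional graphoid, Proposition~\ref{prop:110}), it is enough to show that $\langle i,j\cd C\rangle\in\mathcal{J}$ implies $i\dse j\cd C$ in $G$ for all single nodes $i,j$ and all $C\subseteq V\setminus\{i,j\}$.

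For the singleton statement I would argue by contradiction through a minimal counterexample. Suppose some triple $\langle i,j\cd C\rangle\in\mathcal{J}$ has $i\notdse j\cd C$ in $G$; among all such triples pick one for which a connecting walk $\pi$ between $i$ and $j$ given $C$ in $G$ has the fewest edges (normalising $\pi$ to a path and, secondarily, minimising $|C|$). If $\pi$ has a single edge then $i$ and $j$ are adjacent in $G$, hence in $\sk(G)=\sk(\mathcal{J})$, so no conditioning set separates them in $\mathcal{J}$, contradicting $\langle i,j\cd C\rangle\in\mathcal{J}$; thus $\pi$ has at least two edges and $i,j$ are non-adjacent. Now I would peel off a node $k$ of $\pi$ that is adjacent in $G$ to an endpoint of $\pi$, say to $j$ (choosing $k$ extremal w.r.t.\ $\lesssim$ among the nodes of $\pi$ when this helps); since $j$ and $k$ are adjacent in $\sk(G)=\sk(\mathcal{J})$, no conditioning set makes them independent in $\mathcal{J}$. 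Depending on whether $k\in C$ and on the edge types of $\pi$ at $k$ --- which, through validity of the minimal preorder (Propositions~\ref{prop:600} and~\ref{prop:50}), pins down how $k$ compares under $\lesssim$ with $i$, $j$ and the nodes of $C$ --- I would use ordered upward-stability to move $k$ into the conditioning set, or ordered downward-stability to move it out, obtaining $\langle i,j\cd C'\rangle\in\mathcal{J}$ with $C'=C\cup\{k\}$ or $C'=C\setminus\{k\}$. Applying singleton-transitivity to $\langle i,j\cd C\rangle$ and $\langle i,j\cd C'\rangle$ then yields $\langle i,k\cd D\rangle\in\mathcal{J}$ or $\langle j,k\cd D\rangle\in\mathcal{J}$, where $D$ is the smaller of $C$ and $C'$. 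The second disjunct is impossible by adjacency of $j$ and $k$; the first disjunct $\langle i,k\cd D\rangle\in\mathcal{J}$ is a strictly smaller counterexample, since a subwalk of $\pi$ yields a connecting walk between $i$ and $k$ given $D$ in $G$ with strictly fewer edges (possibly after one further stability step to reconcile the conditioning set). Either way minimality is contradicted, which completes the proof.

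The hard part will be the last paragraph --- making the ``peel, stabilise, apply singleton-transitivity'' step go through uniformly. The connecting-walk condition is phrased in terms of collider and non-collider sections, with the subtleties that sections containing endpoints are never colliders and that arcs behave differently from arrows under the preorder, whereas the applicability of ordered upward- and downward-stability is governed entirely by $\lesssim$ (e.g.\ downward-stability may delete $k$ only when no node of $C\setminus\{k\}$ lies strictly below $k$, a genuine restriction once $C$ contains descendants of $k$). Matching these so that simultaneously (i)~a stability rule really applies, (ii)~singleton-transitivity fires with one disjunct forced impossible by adjacency, and (iii)~the surviving configuration is an honest shorter connecting walk for an admissible conditioning set, is where essentially all the case analysis lives; I expect one must choose $k$ and the direction of stabilisation carefully, argue separately according to whether the section of $\pi$ at $k$ is a collider section, and occasionally combine the stabilisation step with the rerouting of $\pi$ rather than perform them one after the other.
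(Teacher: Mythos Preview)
Your necessity argument and the reduction to singletons match the paper. The core of the sufficiency argument, however, is organised dually: the paper does not run a minimal-counterexample descent but a \emph{forward} induction along the connecting walk $\pi=\langle i_1,\dots,i_r\rangle$, proving at each stage that $\langle i_1,i_q\cd (\cdot)\rangle\notin\mathcal{J}$ for a carefully maintained conditioning set. That conditioning set is the technical device your sketch lacks: for each inner node $i_q$ the paper defines an auxiliary $C_{i_q}\subseteq C\setminus\pi$ (empty if $i_q\notin C$; otherwise those $l\in C\setminus\pi$ reachable by a semi-directed path from $i_q$ but from no later $i_p\in C$), and the conditioning set at stage $q$ is $(\bigcup_{p<q}C_{i_p})\cup[C\setminus(\{i_q,\dots,i_r\}\cup C_\pi)]$. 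This bookkeeping is precisely what makes the preorder hypotheses of upward- or downward-stability verifiable at each step (and, in the collider-in-$C$ case, what allows the members of $C_{i_q}$ to be fed back in one by one in decreasing order), so that a single application of singleton-transitivity followed by stability advances $q$ to $q+1$. Your peel-and-descend scheme is the mirror image and is plausibly completable, but the hedges you insert --- choosing $k$ extremal, ``possibly after one further stability step'' --- are exactly the places where the paper's $C_{i_q}$ bookkeeping does real work; without an analogue of it your descent step is not yet a proof.

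One concrete correction: you cannot normalise $\pi$ to a path. In anterial graphs a shortest connecting walk may genuinely repeat nodes; the paper's worked example immediately following this proposition exhibits an LWF chain graph in which $l\notdse j\cd l_2$ but no connecting \emph{path} exists, only a walk that revisits a node. Your minimal-edge-count formulation survives this (and the paper's induction is likewise phrased for walks), but the parenthetical normalisation should be dropped.
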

\begin{proof}
Suppose that $\mathcal{J}$ and $G$ are faithful. This is equivalent to $\mathcal{J}=\mathcal{J}(G)$. By Proposition \ref{prop:1}, $\mathcal{J}$ satisfies singleton-transitivity, and by Proposition \ref{prop:3}, the result follows.

Conversely, suppose that $\mathcal{J}$ satisfies singleton-transitivity as well as ordered downward- and upward-stability w.r.t.\ the minimal preorder $\lesssim$ for $G$. By Proposition \ref{prop:minord}, $G=G(\mathcal{J},\lesssim)$. We show that $\mathcal{J}$ and $G$ are faithful.  We need to show that if $\langle A,B\cd C\rangle\in \mathcal{J}$ then $A\dse B\cd C$ in $G$. Consider $i\in A$ and $j\in B$. By decomposition, we have that $\langle i,j\cd C\rangle\in \mathcal{J}$, which implies that $i$ and $j$ are not adjacent in $G$.

If, for contradiction, $C$ does not separate nodes $i$ and $j$ then there exists a connecting walk $\pi=\langle i=i_1, i_2,\dots , i_r = j\rangle$, on which all collider sections have a node in $C$ and all non-collider sections are outside $C$. 
In addition, for every node $i_q$, $2\leq q\leq r-1$, on $\pi$ define $C_{i_q}$ as follows: If $i_q\notin C$ then $C_{i_q}=\varnothing$; if $i_q\in C$ then $C_{i_q}$ is the set of all $l\in C\setminus \pi$ such that there is a semi-directed path from $i_q$ to $l$ but not from any $i_p\in C$ to $l$, $p\in\{q+1,\dots,r-1\}$. Let $C_{\pi}=\bigcup_{q=2}^{r-1}C_{i_q}$.

We prove by induction along nodes of $\pi$ that for every $q$, $2\leq q\leq r$, it holds that $\langle i_1,i_q\cd (\cup_{p=2}^{q-1} C_{i_p})\cup [C\setminus(\{i_q,\dots,i_{r}\}\cup C_{\pi})]\rangle\notin \mathcal{J}$. The base for $q=2$ is trivial since by the construction of $G(\mathcal{J},\lesssim)$, adjacent nodes are dependent given anything. Suppose now that the result holds for $q$ and we prove it for $q+1$.

First, again by the construction of $G(\mathcal{J},\lesssim)$, it holds that $\langle i_q,i_{q+1}\cd (\cup_{p=2}^{q-1} C_{i_p})\cup [C\setminus(\{i_q,\dots,i_r\}\cup C_{\pi})]\rangle\notin \mathcal{J}$.  By singleton-transitivity, this and the induction hypothesis imply (a): $\langle i_1,i_{q+1}\cd (\cup_{p=2}^{q-1} C_{i_p})\cup [C\setminus(\{i_q,\dots,i_r\}\cup C_{\pi})]\rangle\notin \mathcal{J}$; or (b): $\langle i_1,i_{q+1}\cd \{i_q\}\cup (\cup_{p=2}^{q-1} C_{i_p})\cup [C\setminus(\{i_q,\dots,i_r\}\cup C_{\pi})]\rangle\notin \mathcal{J}$. We now have two cases:

1) If $i_q$ is on a collider section: The section containing $i_q$ has a node in $C$. We again consider two subcases:

1.i) If $i_q$ is in $C$:  If (a) holds then first notice that $i_q$ is not an anterior of \{$i_1,i_{q+1}\}$ nor is there a semi-directed path from $i_q$ to $(\cup_{p=2}^{q-1} C_{i_p})\cup [C\setminus(\{i_q,\dots,i_r\}\cup C_{\pi})]$.  We apply downward-stability to (a) to obtain (b). If $C_{i_q}\neq\varnothing$ then consider a highest order node $l$ in  $C_{i_q}$. Again by downward-stability, we obtain  $\langle i_1,i_{q+1}\cd \{l\} \cup (\cup_{p=2}^{q-1} C_{i_p})\cup [C\setminus(\{i_{q+1},\dots,i_r\}\cup C_{\pi})]\rangle\notin \mathcal{J}$. By an inductive argument along the members of $C_{i_q}$ (by, at each step, choosing a highest order node in $C_{i_q}$ that has not yet been chosen), we eventually obtain $\langle i_1,i_{q+1}\cd (\cup_{p=2}^q C_{i_p})\cup [C\setminus(\{i_{q+1},\dots,i_r\}\cup C_{\pi})]\rangle\notin \mathcal{J}$. Notice that $i_q$ is in the conditioning set of this dependency.

1.ii) If $i_q$ is not in $C$:  If (b) holds then observe that $i_q$ is in the same equivalence class as that of either $i_{q+1}$ or a node $i_s\in C$, $s<q$. 
Hence we can apply upward-stability to obtain  (a). Since $C_{i_2}=\varnothing$, (a) is clearly the same as $\langle i_1,i_{q+1}\cd (\cup_{p=2}^q C_{i_p})\cup [C\setminus(\{i_{q+1},\dots,i_r\}\cup C_{\pi})]\rangle\notin \mathcal{J}$. Notice that $i_q$ is not in the conditioning set of this dependency.

2) If $i_q$ is on a non-collider section: We have that $i_q\notin C$, and $i_q\in\ant(\{i_1,i_{q+1}\})$. Hence, by upward-stability, from (b) we obtain (a). Again, since $C_{i_q}=\varnothing$, we obtain the result.

Therefore, we established that $\langle i_1,i_q\cd (\cup_{p=2}^{q-1} C_{i_p})\cup [C\setminus(\{i_q,\dots,i_{r}\}\cup C_{\pi})]\rangle\notin \mathcal{J}$, for all $q$, where depending on whether $i_q\in C$ or not, the conditioning set contains or does not contain $i_q$. By letting $q=r$, we obtain $\langle i_1,i_r\cd C\rangle\notin \mathcal{J}$, a contradiction. Therefore, it holds that $i\dse j \cd C$ in $G$. By the composition property for separation for graphs (Proposition \ref{prop:110}), we obtain the result.
\end{proof}
We follow the arguments in the proof for the following example.
\begin{example}
Consider the  undirected (LWF) chain graph $G$ in Fig.\ \ref{fig:ex1}. Suppose that a probability distribution $P$ is minimally Markov to $G$, and $\mathcal{J}(P)$ satisfies singleton-transitivity and ordered downward- and upward-stability w.r.t.\ the minimal preorder for $G$. In order to prove faithfulness, one needs to show that if two sets of nodes are not separated given a third set then they are not independent either.

For example, we have that $l\notdse j\cd l_2$. Notice that this is a case where there is no path that connects  $l$ and $j$. In order to show that $l\notci j\cd l_2$, (as in the proof of the above theorem) we start from $l=l_1$ and move towards $j=l_7$ via the connecting walk $\langle l_1,l_2,l_3,l_4,l_5,l_6,l_7 \rangle$ given $l_2$.
\begin{figure}[h]
\centering
\scalebox{0.28}{\includegraphics{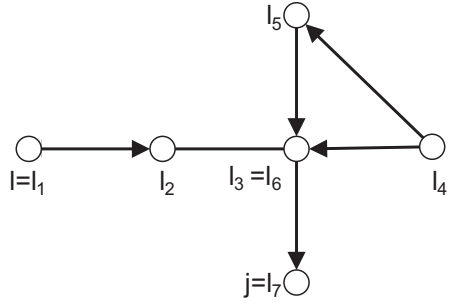}}
  \caption[]{\small{An undirected (LWF) chain graph.}}
     \label{fig:ex1}
\end{figure}

We abbreviate singleton-transitivity, ordered upward-stability, and ordered downward stability by ``$st$'', ``$ous$'', and ``$ods$'' respectively. We write the arguments in a condensed from. For example, the first implication (on the left hand side) says that $l_1\notci l_2$  and $l_2\notci l_3$, using singleton-transitivity, imply that either $l_1\notci l_3$ or $l_1\notci l_3\cd l_2$ ; but now ordered downward-stability implies that we must have $l_1\notci l_3\cd l_2$.\\

\begin{tabular}{@{}c@{}@{}c@{}@{}c@{}@{}c@{}@{}@{}c@{}@{}c@{}@{}c@{}@{}c@{}@{}c@{}@{}c@{}@{}@{}c@{}}
$l_1\notci l_2$ & & $l_1\notci l_3$ & & $l_4\notci l_5\cd l_2$ & &  $l_1\notci l_5\cd \{l_2,l_4\}$ & & $l_6\notci l_7\cd l_2$ &&  $l_1\notci l_7\cd \{l_2,l_6\}$  \nls
	       &$\stackrel{st}{\Rightarrow}$ & \small{or \nn\n $\Downarrow$\hspace{-1mm} ods}&&&  \hspace{-2mm}$\stackrel{st}{\Rightarrow}$ & \small{or \nn\n $\Downarrow$\hspace{-1mm} ous} & & & \hspace{-2mm}$\stackrel{st}{\Rightarrow}$ &  \small{or \nn\n $\Downarrow$\hspace{-1mm} ous} \nls
$l_2\notci l_3$ & & $l_1\notci l_3\cd l_2$ & & $l_1\notci l_4\cd l_2$ & &  $l_1\notci l_5\cd l_2$ & & $l_1\notci l_6\cd l_2$ & &   $l_1\notci l_7\cd l_2$.\nls
&&&$\stackrel{st}{\Rightarrow}$& \small{or \nn\n $\Uparrow$\hspace{-1mm} ous} & & &   \hspace{-2mm}$\stackrel{st}{\Rightarrow}$ & \small{or \nn\n $\Uparrow$\hspace{-1mm} ous} & &  \nls
&&$l_3\notci l_4\cd l_2$ & & $l_1\notci l_3\cd \{l_2,l_3\}$ & & $l_5\notci l_6\cd l_2$ & & $l_1\notci l_6\cd \{l_2,l_5\}$ &&
\end{tabular}
\end{example}
The following example shows how singleton-transitivity and ordered stabilities are necessary for faithfulness.
\begin{example}
Consider the DAG $G$ in Fig.\ \ref{fig:ex2}. If an independence model $\mathcal{J}$ is minimally Markov to $G$ then we must have $\langle h,k\cd \{j,l\}\rangle\in \mathcal{J}$  and $\langle j,l\cd k\rangle\in \mathcal{J}$. Now, in violation of faithfulness, if only $\langle h,k\cd \varnothing\rangle\in \mathcal{J}$ in addition to these holds then upward-stability is violated since for $j>h$, $\langle h,k\cd j\rangle\in \mathcal{J}$  must hold.   If, in violation of faithfulness, only $\langle h,k\cd j\rangle\in \mathcal{J}$ in addition to the original statements holds then singleton-transitivity is violated since  $\langle h,k\cd j\rangle\in \mathcal{J}$ and $\langle h,k\cd \{j,l\}\rangle\in \mathcal{J}$ must imply either $\langle h,l\cd j\rangle\in \mathcal{J}$  or $\langle k,l\cd j\rangle\in \mathcal{J}$, which are both impossible due to minimality since $h,l$ and $k,l$ are adjacent in $G$.
\begin{figure}[h]
\centering
\scalebox{0.28}{\includegraphics{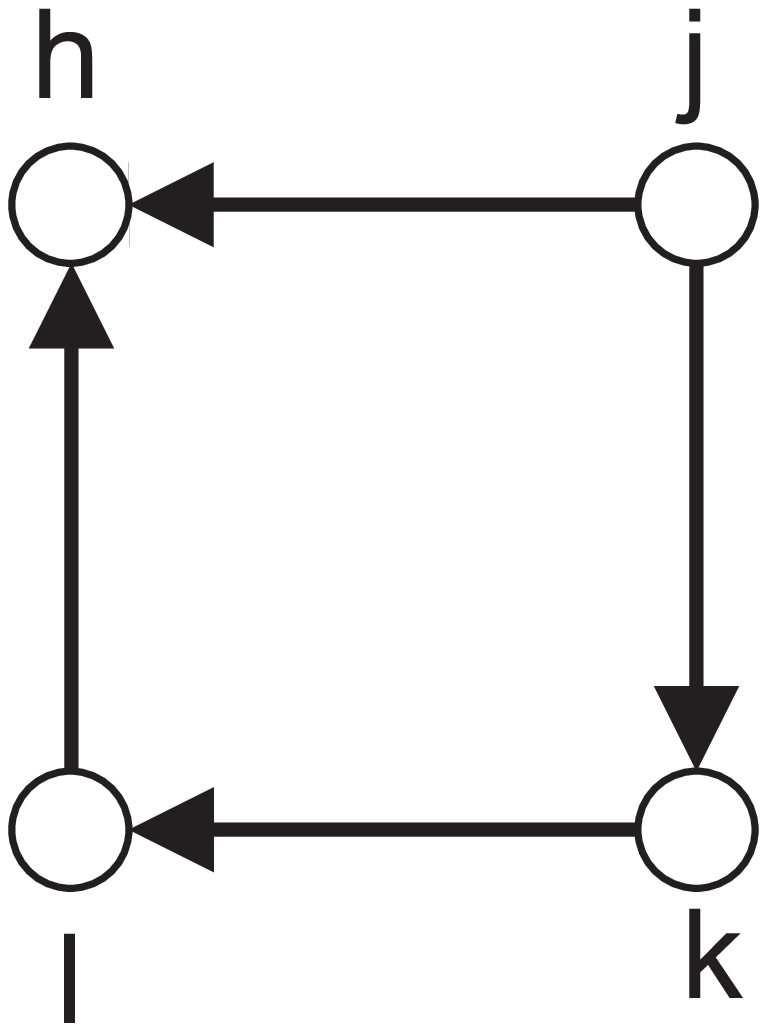}}
  \caption[]{\small{A DAG.}}
     \label{fig:ex2}
\end{figure}
\end{example}

Without assuming the Markov assumption we have the following:
\begin{theorem}\label{thm:2}
Let $\mathcal{J}$ be an independence model defined over $V$. It then holds that $\mathcal{J}$ is graphical if and only if
\begin{enumerate}
  \item $\mathcal{J}$ is a singleton-transitive compositional graphoid; and
  \item there exists a $\mathcal{J}$-compatible preorder $\lesssim$ over $V$ w.r.t.\ which $\mathcal{J}$ satisfies ordered downward- and upward-stability.
\end{enumerate}
In addition, if 1 and 2 hold then $\mathcal{J}$ is faithful to $G(\mathcal{J},\lesssim)$.
\end{theorem}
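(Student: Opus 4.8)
The plan is to derive Theorem~\ref{thm:2} by assembling the pieces already established, with a $\mathcal{J}$-compatible preorder serving as the bridge between the abstract hypotheses and the concrete graph $G(\mathcal{J},\lesssim)$.

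For the ``only if'' direction, I would start from a graph $H$ to which $\mathcal{J}$ is faithful. Since every CMG has a Markov equivalent anterial graph (as noted after Proposition~\ref{prop:60}) and Markov equivalence preserves the induced independence model, I may assume $H$ is anterial. Faithfulness gives $\mathcal{J}=\mathcal{J}(H)$, so Propositions~\ref{prop:110} and~\ref{prop:1} immediately yield that $\mathcal{J}$ is a singleton-transitive compositional graphoid. For the preorder, let $\lesssim$ be the minimal preorder for $H$; Proposition~\ref{prop:01} says $\mathcal{J}$ is minimally Markov to $H$, and then Proposition~\ref{prop:minord} gives $H=G(\mathcal{J},\lesssim)$. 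Since $\lesssim$ is the minimal preorder for $H=G(\mathcal{J},\lesssim)$, it is $\mathcal{J}$-compatible by definition, and Proposition~\ref{prop:3} shows $\mathcal{J}=\mathcal{J}(H)$ satisfies ordered upward- and downward-stability w.r.t.\ $\lesssim$. This establishes 1 and 2.

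For the ``if'' direction and the final clause, suppose 1 and 2 hold with witnessing preorder $\lesssim$. Theorem~\ref{thm:0} gives that $\mathcal{J}$ is minimally Markov to $G:=G(\mathcal{J},\lesssim)$, which is anterial (as noted when $G(\mathcal{J},\lesssim)$ was introduced, via Proposition~\ref{prop:60}). The crucial bookkeeping point is that $\mathcal{J}$-compatibility of $\lesssim$ means precisely that $\lesssim$ is the minimal preorder for $G$; this is what lets me invoke Proposition~\ref{prop:10} with its hypothesis ``w.r.t.\ the minimal preorder for $G$'' matched by the stability assumed in 2, while singleton-transitivity is supplied by 1. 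Proposition~\ref{prop:10} then concludes that $\mathcal{J}$ and $G$ are faithful, so $\mathcal{J}$ is graphical and, in particular, faithful to $G(\mathcal{J},\lesssim)$.

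I expect the only real subtlety — the ``hard part'', such as it is — to be this identification of the minimal preorder of $G(\mathcal{J},\lesssim)$ with $\lesssim$ itself, i.e.\ checking that the definition of $\mathcal{J}$-compatibility is exactly the hinge needed to align the conclusion of Theorem~\ref{thm:0} with the hypothesis of Proposition~\ref{prop:10}; everything else is a direct citation. A secondary point to be careful about is the reduction to anterial graphs in the ``only if'' direction, which relies on the fact that a faithful graph and its Markov equivalent anterial graph induce the same independence model, hence are faithful to the same $\mathcal{J}$.
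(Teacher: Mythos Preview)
Your proposal is correct and follows essentially the same route as the paper's proof: the forward direction cites Propositions~\ref{prop:110}, \ref{prop:1}, and \ref{prop:3} together with the reduction to an anterial graph, and the backward direction chains Theorem~\ref{thm:0} with Proposition~\ref{prop:10} via $\mathcal{J}$-compatibility. If anything, you are more explicit than the paper in verifying that the minimal preorder for the faithful anterial graph is $\mathcal{J}$-compatible (invoking Propositions~\ref{prop:01} and~\ref{prop:minord}), a step the paper's terse proof leaves implicit.
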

\begin{proof}
($\Rightarrow$) follows from Propositions \ref{prop:110}, \ref{prop:1}, and \ref{prop:3}, and the fact that there is always an AnG that is Markov equivalent to a given CMG. To prove ($\Leftarrow$) and the last statement of the theorem, consider $G(\mathcal{J},\lesssim)$. Theorem \ref{thm:0} implies that $\mathcal{J}$ is minimally Markov to $G(\mathcal{J},\lesssim)$. This together with the $\mathcal{J}$-compatibility of the preorder, using Proposition \ref{prop:10}, implies that $\mathcal{J}$ and $G(\mathcal{J},\lesssim)$ are faithful.
\end{proof}
Hence, for probability distributions, we have the following characterization:
\begin{corollary}\label{coro:vvn}
Let $P$ be a probability distribution defined over $\{X_\alpha\}_{\alpha\in V}$. It then holds that $P$ is graphical if and only if
\begin{enumerate}
  \item $\mathcal{J}(P)$ satisfies intersection, composition, and singleton-transitivity; and
  \item there exists a $P$-compatible preorder $\lesssim$ over $V$ w.r.t.\ which $\mathcal{J}(P)$ satisfies ordered downward- and upward-stability.
\end{enumerate}
In addition, if 1 and 2 hold then $P$ is faithful to $G(P,\lesssim)$.
\end{corollary}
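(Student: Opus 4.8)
The plan is to derive Corollary \ref{coro:vvn} directly from Theorem \ref{thm:2} by checking that, for a probability distribution $P$, the independence model $\mathcal{J}(P)$ automatically inherits the structural properties listed in Theorem \ref{thm:2} except for the ones explicitly stated in the corollary. First I would recall, as noted in Section \ref{sec:propind}, that any probabilistic independence model $\mathcal{J}(P)$ is a semi-graphoid; hence symmetry, decomposition, weak union, and contraction hold for $\mathcal{J}(P)$ for free, with no assumption on $P$. Therefore, to say that $\mathcal{J}(P)$ is a singleton-transitive compositional graphoid is, for a probabilistic independence model, exactly the same as requiring that it additionally satisfy the intersection property (which upgrades a semi-graphoid to a graphoid), the composition property (which makes it compositional), and singleton-transitivity. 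This is precisely condition 1 of the corollary, so conditions 1 of Theorem \ref{thm:2} and of the corollary are equivalent for $\mathcal{J}=\mathcal{J}(P)$.

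Next I would observe that condition 2 of the corollary is simply condition 2 of Theorem \ref{thm:2} transcribed for the special case $\mathcal{J}=\mathcal{J}(P)$, using the already-introduced terminology that a $P$-compatible preorder means a $\mathcal{J}(P)$-compatible preorder, and that $G(P,\lesssim)$ is defined as $G(\mathcal{J}(P),\lesssim)$. So the two conditions of the corollary, taken together, are logically equivalent to the two conditions of Theorem \ref{thm:2} applied to $\mathcal{J}(P)$.

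With this translation in hand, the proof is a one-line appeal: by definition, $P$ is graphical if and only if $\mathcal{J}(P)$ is graphical (this is exactly the definition of ``graphical'' for a distribution given in Section \ref{sec:faith}). Applying Theorem \ref{thm:2} to the independence model $\mathcal{J}(P)$ then yields that $\mathcal{J}(P)$ is graphical if and only if conditions 1 and 2 (in their probabilistic form) hold, and in that case $\mathcal{J}(P)$ — hence $P$ — is faithful to $G(\mathcal{J}(P),\lesssim)=G(P,\lesssim)$. The only genuine content beyond quoting Theorem \ref{thm:2} is the bookkeeping identification ``semi-graphoid + intersection + composition $=$ compositional graphoid'', which is immediate from the definitions in Section \ref{sec:propind}; I do not anticipate any real obstacle, as this corollary is purely a specialization. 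The one point worth stating explicitly, to avoid circularity, is that we never need positivity or Gaussianity of $P$: all the ``free'' axioms come from $P$ being a probability distribution at all, and everything else is hypothesized outright.
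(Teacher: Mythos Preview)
Your proposal is correct and matches the paper's approach: the paper presents this corollary without proof immediately after Theorem~\ref{thm:2}, introduced by ``Hence, for probability distributions, we have the following characterization,'' relying on exactly the observation you make---that $\mathcal{J}(P)$ is always a semi-graphoid, so the singleton-transitive compositional graphoid requirement reduces to intersection, composition, and singleton-transitivity.
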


In fact, we have shown in Theorem \ref{thm:2} that if $\mathcal{J}$ is graphical then for every $\mathcal{J}$-compatible preorder w.r.t.\ which $\mathcal{J}$ satisfies ordered downward- and upward-stability, $G(\mathcal{J},\lesssim)$ is a graph that is faithful to $\mathcal{J}$. A consequence of Proposition \ref{prop:minord} implies that such graphs (that is $G=G(\mathcal{J},\lesssim)$ for some preorder $\lesssim$ w.r.t.\ which $\mathcal{J}$ satisfies ordered downward- and upward-stability) constitute the set of all graphs that are faithful to $\mathcal{J}$, that is a Markov equivalence class of graphs whose members are faithful to $\mathcal{J}$. Every member of the equivalence class corresponds to a different preorder  w.r.t.\ which $\mathcal{J}$ satisfies ordered downward- and upward-stability. This equivalence set can be represented by one member (a graph with the least number of arrowheads) similar to the idea of a CPDAG for the Markov equivalence class of DAGs (see \citealt{spioo}), but we do not go through the details of this here.

Of course, if  the goal is to verify whether $\mathcal{J}$ and a given $G$ are faithful then we can use the following corollary:
\begin{corollary}\label{coro:2}
Let $\mathcal{J}$ be an independence model defined over $V$, and $G$ an AnG with node set $V$. It then holds that if
\begin{enumerate}
  \item $\mathcal{J}$ is a singleton-transitive compositional graphoid; and
  \item $\mathcal{J}$ satisfies ordered downward- and upward-stability w.r.t.\ the minimal preorder for $G$,
\end{enumerate}
then $\mathcal{J}$ and $G$ are faithful.
\end{corollary}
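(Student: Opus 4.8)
The plan is to obtain the corollary by combining Theorem~\ref{thm:0} (for minimal Markovness) with Proposition~\ref{prop:10} (for faithfulness of a minimally Markov pair). Write $\lesssim$ for the minimal preorder for $G$. Since $G$ is anterial, Proposition~\ref{prop:600} guarantees that $\lesssim$ is a valid preorder for $G$, and Proposition~\ref{prop:00} says that directing the edges of $\sk(G)$ according to $\lesssim$ recovers $G$ exactly; so $G$ is the graph singled out by its skeleton together with $\lesssim$.

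The first and central step is to show that $\mathcal{J}$ is minimally Markov to $G$, and I would do this by identifying $G$ with the dependence graph $G(\mathcal{J},\lesssim)$. Once one checks that $\sk(\mathcal{J})=\sk(G)$, Proposition~\ref{prop:00} gives $G(\mathcal{J},\lesssim)=G$, and then $\lesssim$, being the minimal preorder of $G=G(\mathcal{J},\lesssim)$, is by definition $\mathcal{J}$-compatible. With this in hand, hypothesis~1 supplies that $\mathcal{J}$ is a compositional graphoid and hypothesis~2 supplies a $\mathcal{J}$-compatible preorder w.r.t.\ which $\mathcal{J}$ has ordered upward- and downward-stability, so Theorem~\ref{thm:0} yields that $\mathcal{J}$ is minimally Markov to $G(\mathcal{J},\lesssim)=G$.

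For the conclusion I would then apply Proposition~\ref{prop:10} to the pair $(\mathcal{J},G)$: $\mathcal{J}$ is minimally Markov to the anterial graph $G$, it satisfies singleton-transitivity by hypothesis~1, and it satisfies ordered upward- and downward-stability w.r.t.\ the minimal preorder for $G$ by hypothesis~2; hence $\mathcal{J}$ and $G$ are faithful. Equivalently, once $\lesssim$ is known to be $\mathcal{J}$-compatible one can simply invoke Theorem~\ref{thm:2} directly, since its two hypotheses then hold and it delivers faithfulness to $G(\mathcal{J},\lesssim)=G$.

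I expect the one place needing genuine work to be the skeleton identification $\sk(\mathcal{J})=\sk(G)$ (equivalently, the verification that $\lesssim$ is $\mathcal{J}$-compatible). The hypotheses constrain $\mathcal{J}$ only through the compositional-graphoid axioms and through its interaction with $\lesssim$, and $\lesssim$ records the line/arrowhead pattern of $G$ rather than its adjacency structure; so one must argue that every pair of adjacent nodes of $G$ is genuinely dependent given all conditioning sets in $\mathcal{J}$, and that every non-adjacent pair admits some separating set in $\mathcal{J}$ — in other words, that $G$ really is the graph $\mathcal{J}$ picks out among anterial graphs carrying the preorder $\lesssim$. Everything after that is the routine chaining of the cited results described above.
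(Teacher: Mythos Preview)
Your overall strategy matches the paper's intention: the corollary is stated immediately after Theorem~\ref{thm:2} with no separate proof, so it is meant to be a direct specialization of that theorem (equivalently, of the combination Theorem~\ref{thm:0} + Proposition~\ref{prop:10} you describe). The chaining you lay out after the skeleton step is exactly right.

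However, the step you flag as ``needing genuine work'' --- the identification $\sk(\mathcal{J})=\sk(G)$ --- is not merely work you have not yet done; it cannot be carried out from the stated hypotheses. The minimal preorder for $G$ is only a preorder on $V$; it records the transitive line/arrow structure of $G$ but carries no information about arcs or about which pairs are non-adjacent. Two different AnGs can share the same minimal preorder, and the hypotheses of the corollary see only that preorder, not $G$ itself. Concretely, take $V=\{1,2\}$, let $G$ be the single arc $1\arc 2$, and let $\mathcal{J}$ be the full independence model (containing $\langle 1,2\cd\varnothing\rangle$). The minimal preorder for $G$ has $1$ and $2$ incomparable; conditions~1 and~2 are satisfied (the stability conditions are vacuous since $V\setminus\{1,2\}=\varnothing$); yet $\mathcal{J}(G)$ contains only trivial statements, so $\mathcal{J}$ and $G$ are not faithful.

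What is missing from the corollary as stated is precisely the hypothesis you were trying to manufacture: that the minimal preorder for $G$ be $\mathcal{J}$-compatible, i.e.\ $\sk(G)=\sk(\mathcal{J})$ (equivalently, $G=G(\mathcal{J},\lesssim)$). With that added, your argument goes through verbatim via Theorem~\ref{thm:2}. So your diagnosis is correct, but the resolution is not a proof of the skeleton identity --- it is recognizing that the corollary needs this as an additional assumption.
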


It is also important to note  that singleton-transitivity and ordered upward or downward-stability are necessary in the sense that they are not implied by one another, although there are different ways to axiomatize singleton-transitive compositional graphoids that satisfy ordered upward- and downward-stability.

If $P$ is Gaussian then we have the following:
\begin{corollary}\label{coro:gaus}
Let $P$ be regular Gaussian distribution. It then holds that $P$ is graphical if and only if there exists a $P$-compatible preorder $\lesssim$ w.r.t.\ which $\mathcal{J}(P)$ satisfies ordered downward- and upward-stability.
\end{corollary}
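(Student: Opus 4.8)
The goal is to deduce Corollary~\ref{coro:gaus} from Theorem~\ref{thm:2} (or equivalently from Corollary~\ref{coro:vvn}) by verifying that the conditions listed in part~1 of that characterization are automatically satisfied for a regular Gaussian distribution, so that only the preorder condition in part~2 remains. The plan is to invoke, essentially verbatim, the facts collected in Section~\ref{sec:propind}: first, that for a regular multivariate Gaussian distribution $P$ the induced independence model $\mathcal{J}(P)$ is a compositional graphoid (cited in the excerpt from \citet{stu05}); and second, that $\mathcal{J}(P)$ satisfies singleton-transitivity, which \citet{pea88} establishes for Gaussian (and binary) distributions and which is recalled in the paragraph introducing property~7. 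Together these say precisely that $\mathcal{J}(P)$ is a \emph{singleton-transitive compositional graphoid}, which is condition~1 of Theorem~\ref{thm:2}.

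With condition~1 discharged, the statement of Theorem~\ref{thm:2} specializes immediately: $P$ (via $\mathcal{J}(P)$) is graphical if and only if there exists a $P$-compatible preorder $\lesssim$ over $V$ w.r.t.\ which $\mathcal{J}(P)$ satisfies ordered downward- and upward-stability, and in that case $P$ is faithful to $G(P,\lesssim)$. One should note here that $P$-compatibility of a preorder is defined (in Section~\ref{sec:order}) exactly as $\mathcal{J}(P)$-compatibility, and $G(P,\lesssim)$ is defined as $G(\mathcal{J}(P),\lesssim)$, so there is no gap in translating between the distribution-level and independence-model-level statements. Thus the "if and only if" in the corollary is just the "if and only if" of the theorem with its first clause rendered vacuous.

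Concretely, I would structure the proof as two short implications plus a remark. For ($\Rightarrow$): if $P$ is graphical then $\mathcal{J}(P)$ is graphical, so by Theorem~\ref{thm:2} a $P$-compatible preorder with the ordered-stability properties exists. For ($\Leftarrow$): given such a preorder, condition~1 of Theorem~\ref{thm:2} holds because $P$ is regular Gaussian (citing \citet{stu05} and \citet{pea88} as above) and condition~2 holds by hypothesis, so $\mathcal{J}(P)$ is graphical, i.e.\ $P$ is graphical, and moreover faithful to $G(P,\lesssim)$. I expect no real obstacle: the only thing to be careful about is making explicit that "regular Gaussian" is exactly the hypothesis under which both the compositional graphoid property and singleton-transitivity are known to hold, and that nothing beyond Theorem~\ref{thm:2} is needed. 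If one wished, one could phrase the whole argument as a one-line corollary of Corollary~\ref{coro:vvn}, observing that a regular Gaussian $\mathcal{J}(P)$ automatically satisfies intersection, composition, and singleton-transitivity, so clause~1 there is free and only the preordering clause survives.
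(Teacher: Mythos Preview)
Your proposal is correct and takes essentially the same approach as the paper: the paper's proof is a one-line remark that regular Gaussian distributions satisfy intersection, composition, and singleton-transitivity, so condition~1 of Corollary~\ref{coro:vvn} (equivalently Theorem~\ref{thm:2}) is automatic and only the preorder condition survives. Your write-up is simply a more detailed unpacking of this same reduction.
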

\begin{proof}
The proof follows from the fact that regular Gaussian distributions satisfy intersection, composition, and singleton-transitivity.
\end{proof}

\section{Specialization to Subclasses}\label{sec:spec}
The definitions and results presented in this paper can be specialized to any subclass of CMGs including ancestral graphs and LWF chain graphs. However, here we only focus on the three of the most used classes of undirected (concentration), bidirected (covariance), and directed acyclic graphs.
\subsection{Specialization to Undirected and Bidirected Graphs}\label{sec:un-bi}
For connected UGs, every two nodes are in the same equivalence class, and for BGs every two nodes are incomparable; hence, the minimal (pre)ordering is trivial and uninteresting in these cases. More precisely, ordered upward- and downward-stabilities can be specialized for these two types of  trivial preordering:
\begin{itemize}
	\item[8a.] if $\langle i,j\cd C\rangle\in \mathcal{J}$ then $\langle i,j\cd C\cup\{k\}\rangle\in \mathcal{J}$ for every $k\in V\setminus\{i,j\}$ (\emph{upward-stability});
	\item[9a.] if $\langle i,j\cd C\rangle\in \mathcal{J}$ then $\langle i,j\cd C\setminus\{k\}\rangle\in \mathcal{J}$ for every $k\in V\setminus\{i,j\}$ (\emph{downward-stability}).
\end{itemize}
\begin{proposition}\label{prop:m}
An independence model $\mathcal{J}$ satisfies ordered upward- and downward-stability (8) and (9) w.r.t.\ the minimal preorder if and only if,
\begin{enumerate}
  \item for connected undirected graphs, it satisfies upward-stability (8a);
  \item for bidirected graphs, it satisfies downward-stability (9a).
\end{enumerate}
In addition, if, for undirected graphs, $\mathcal{J}$ satisfies (8a) then it satisfies (8).
\end{proposition}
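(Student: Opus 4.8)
The plan is to prove all three assertions by unwinding the definitions of ordered upward-stability (8) and ordered downward-stability (9) against the two degenerate preorders in play, using nothing beyond the explicit description of ``the minimal preorder'' in each subclass. The one preliminary I would record is: for a connected undirected graph the minimal preorder places all of $V$ in a single equivalence class, so that $i\sim k$, and hence $i\lesssim k$, for all $i,k$; while for a bidirected graph (which has no anterior walks, so $\ant(j)=\varnothing$ for every $j$) the minimal preorder makes every pair of distinct nodes incomparable, so that for $a\neq b$ we have $a\not\lesssim b$ and $a\not< b$, and $a\sim b$ only when $a=b$. With these descriptions fixed, each side condition attached to (8) and (9) turns out to be either always satisfied or never satisfied, and the two properties collapse as claimed.

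First I would treat the connected undirected case. Fixing $i\neq j$ and $k\in V\setminus\{i,j\}$, the relation $i\lesssim k$ holds, so the hypothesis ``$l\lesssim k$ for some $l\in\{i,j\}$'' in (8) is automatically met; thus (8) effectively ranges over all $k\in V\setminus\{i,j\}$ and is literally the statement (8a). For (9), the same fact $i\lesssim k$ shows that ``$l\not\lesssim k$ for every $l\in\{i,j\}$'' can never hold, so (9) has no instances and is vacuously true for every $\mathcal{J}$. Hence, for a connected undirected graph, $\mathcal{J}$ satisfies (8) and (9) w.r.t.\ the minimal preorder if and only if it satisfies (8a).

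Next I would treat the bidirected case. Again fix $i\neq j$ and $k\in V\setminus\{i,j\}$. In (8), ``$l\lesssim k$ for some $l\in\{i,j\}$'' would force $l=k$, which is excluded, whereas ``$l\sim k$ for some $l\in C$'' forces $k\in C$, and then $C\cup\{k\}=C$, so the conclusion is already the hypothesis; thus (8) imposes no restriction on any $\mathcal{J}$. In (9), since distinct nodes are incomparable we have $l\not\lesssim k$ for $l\in\{i,j\}$ and $l\not< k$ for every $l\in C\setminus\{k\}$, so both side conditions hold for every such $k$ and (9) becomes exactly (9a). Hence, using that (8) is automatic, $\mathcal{J}$ satisfies (8) and (9) w.r.t.\ the minimal preorder if and only if it satisfies (9a).

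Finally, for the ``in addition'' clause I would observe that for an arbitrary (possibly disconnected) undirected graph, property (8a) yields $\langle i,j\cd C\cup\{k\}\rangle\in\mathcal{J}$ for every $k\in V\setminus\{i,j\}$ whenever $\langle i,j\cd C\rangle\in\mathcal{J}$, hence in particular for those $k$ meeting the extra side condition of (8); so (8a) implies (8) at once. I do not expect a genuine obstacle here: the argument is purely the bookkeeping of quantifiers in (8) and (9) together with the correct identification of the minimal preorder for each subclass. The one point that deserves a little care is the degenerate subcase $k\in C$ inside (8), where the conclusion holds automatically; it is precisely the observation that ``$l\sim k$ with $l\in C$'' forces $k\in C$ that makes the ``vacuous'' claims above genuinely vacuous rather than merely trivial.
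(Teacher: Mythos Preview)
Your proof is correct and follows essentially the same approach as the paper: the paper's own argument is simply that ``in 1 all nodes are in the same equivalence class, and in 2 all nodes are incomparable,'' with the second part declared trivial. You have merely unpacked these two sentences into the explicit quantifier bookkeeping they stand for, including the careful handling of the degenerate subcase $k\in C$ in (8) for the bidirected setting.
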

\begin{proof}
The proof of 1 and 2 are straightforward since in 1 all nodes are in the same equivalence class, and in 2 all nodes are incomparable. The second part is trivial.
%
\end{proof}
Analogous to Proposition \ref{prop:3}, it is also straightforward to show the following:
\begin{proposition}\label{prop:2}
It holds that
\begin{enumerate}
  \item for an undirected graph $G$, $\mathcal{J}(G)$ satisfies upward-stability;
  \item for a bidirected graph $G$, $\mathcal{J}(G)$ satisfies downward-stability.
\end{enumerate}
\end{proposition}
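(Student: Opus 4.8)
### Proof Proposal

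The plan is to prove both parts by directly analyzing connecting walks in the respective special graph class, mirroring the structure of the proof of Proposition \ref{prop:3} but exploiting the simplifications available for undirected and bidirected graphs. By Proposition \ref{prop:m}, it suffices to establish upward-stability (8a) for undirected graphs and downward-stability (9a) for bidirected graphs, rather than the full ordered versions; this is what makes the argument short.

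For part 1, suppose $\langle i,j\cd C\rangle\in\mathcal{J}(G)$ for an undirected graph $G$, i.e.\ $i\dse j\cd C$, and let $k\in V\setminus\{i,j\}$ be arbitrary. I want to show $i\dse j\cd C\cup\{k\}$. Suppose for contradiction there is a walk $\pi$ connecting $i$ and $j$ given $C\cup\{k\}$. In an undirected graph every edge is a line, so every section is a non-collider section and there are no collider sections; the connectivity condition therefore reduces to ``$\pi$ avoids $C\cup\{k\}$ entirely''. But then $\pi$ also avoids $C$, so $\pi$ connects $i$ and $j$ given $C$, contradicting $i\dse j\cd C$. Hence $i\dse j\cd C\cup\{k\}$, which is upward-stability. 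For part 2, suppose $\langle i,j\cd C\rangle\in\mathcal{J}(G)$ for a bidirected graph $G$, so $i\dse j\cd C$, and let $k\in V\setminus\{i,j\}$; I want $i\dse j\cd C\setminus\{k\}$. Again argue by contradiction: let $\pi$ be a walk connecting $i$ and $j$ given $C\setminus\{k\}$. In a bidirected graph every edge is an arc, so every section is a single node, every inner node lies on a collider section, and there are no non-collider inner sections; connectivity given a set $D$ thus means ``every inner node of $\pi$ lies in $D$''. So every inner node of $\pi$ lies in $C\setminus\{k\}\subseteq C$, hence $\pi$ connects $i$ and $j$ given $C$ as well, contradicting $i\dse j\cd C$. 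Therefore $i\dse j\cd C\setminus\{k\}$, establishing downward-stability.

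I do not expect a serious obstacle here; the only thing requiring a little care is making the reduction of the connectivity condition precise in each degenerate case — spelling out that in a UG all sections are single-node non-colliders (so the collider requirement is vacuous and the non-collider requirement forces disjointness from the conditioning set) and that in a BG all sections are single-node colliders (so the non-collider requirement is vacuous and the collider requirement forces membership in the conditioning set). Once these two observations about sections are in place, both implications are immediate from the definition of a connecting walk, and Proposition \ref{prop:m} upgrades them to statements (8) and (9) w.r.t.\ the minimal preorder.
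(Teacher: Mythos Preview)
Your proof is correct and is exactly the direct verification the paper has in mind when it declares the result ``straightforward'' and ``analogous to Proposition~\ref{prop:3}''. One small correction to your closing commentary: in an undirected graph the entire walk constitutes a \emph{single} non-collider section (since every edge is a line), not a sequence of single-node sections; this does not affect your argument, as the conclusion---that a connecting walk must be disjoint from the conditioning set---is the same. Also note that the appeal to Proposition~\ref{prop:m} is unnecessary here, since the statement of Proposition~\ref{prop:2} already concerns (8a) and (9a) directly.
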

For UGs and BGs, denote the  unique and trivial valid preorders by $\lesssim_u^*$ and $\lesssim_b^*$ respectively. For UGs, $G^*_u(\mathcal{J}):=G(\mathcal{J},\lesssim_u^*)=\sk(\mathcal{J})$, whereas for BGs, $G^*_b(\mathcal{J}):=G(\mathcal{J},\lesssim_b^*)$ is $\sk(\mathcal{J})$ with all edges directed to be bidirected. Another way to construct induced UGs and BGs by $\mathcal{J}$ is to construct them based on their corresponding  pairwise Markov property, that is to connect every pair of nodes $i$ and $j$ if $\langle i,j\cd V\setminus\{i,j\}\rangle\notin \mathcal{J}$ for UGs, and if $\langle i, j\cd \varnothing\rangle\notin \mathcal{J}$ for BGs. Denote these graphs by $G_u(\mathcal{J})$ and $G_b(\mathcal{J})$ respectively. We then have the following:
\begin{proposition}\label{coro:4}
It holds that,
\begin{enumerate}
  \item for undirected graphs, $G^*_u(\mathcal{J})=G_u(\mathcal{J})$ if $\mathcal{J}$ satisfies upward-stability;
  \item for bidirected graphs, $G^*_b(\mathcal{J})=G_b(\mathcal{J})$ if $\mathcal{J}$ satisfies downward-stability.
\end{enumerate}
\end{proposition}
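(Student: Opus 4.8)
The plan is to prove each part by establishing that the two undirected (resp.\ bidirected) graphs have the same edge set; since both graphs are built on the node set $V$ and carry only one type of edge, agreement of adjacencies will immediately give equality of graphs. By definition $G^*_u(\mathcal{J})=\sk(\mathcal{J})$, and $G^*_b(\mathcal{J})$ is $\sk(\mathcal{J})$ with every edge turned into an arc, so in both cases the adjacencies of $G^*$ are exactly those of $\sk(\mathcal{J})$. Thus it suffices to compare $\sk(\mathcal{J})$ with $G_u(\mathcal{J})$ in part~1 and with $G_b(\mathcal{J})$ in part~2.

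For part~1, I would first record the inclusion that needs no hypothesis: if $i$ and $j$ are non-adjacent in $G_u(\mathcal{J})$ then $\langle i,j\cd V\setminus\{i,j\}\rangle\in \mathcal{J}$, and this single statement (taking $C=V\setminus\{i,j\}$) already witnesses that $i$ and $j$ are non-adjacent in $\sk(\mathcal{J})$; hence every edge of $\sk(\mathcal{J})$ is an edge of $G_u(\mathcal{J})$. For the reverse inclusion I would use upward-stability: if $i$ and $j$ are non-adjacent in $\sk(\mathcal{J})$ then $\langle i,j\cd C\rangle\in \mathcal{J}$ for some $C\subseteq V\setminus\{i,j\}$, and applying (8a) repeatedly to adjoin, one at a time, the finitely many nodes of $V\setminus(\{i,j\}\cup C)$ yields $\langle i,j\cd V\setminus\{i,j\}\rangle\in \mathcal{J}$, so $i$ and $j$ are non-adjacent in $G_u(\mathcal{J})$. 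Combining the two inclusions gives $\sk(\mathcal{J})=G_u(\mathcal{J})$, i.e.\ $G^*_u(\mathcal{J})=G_u(\mathcal{J})$.

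Part~2 is the mirror image. The hypothesis-free inclusion is: if $i$ and $j$ are non-adjacent in $G_b(\mathcal{J})$ then $\langle i,j\cd \varnothing\rangle\in \mathcal{J}$, which (taking $C=\varnothing$) witnesses non-adjacency in $\sk(\mathcal{J})$, so every edge of $\sk(\mathcal{J})$ is an edge of $G_b(\mathcal{J})$. For the converse I would invoke downward-stability: if $\langle i,j\cd C\rangle\in \mathcal{J}$ for some $C$, then applying (9a) repeatedly to delete the elements of $C$ one at a time gives $\langle i,j\cd \varnothing\rangle\in \mathcal{J}$, so $i$ and $j$ are non-adjacent in $G_b(\mathcal{J})$. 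Hence $\sk(\mathcal{J})$ and $G_b(\mathcal{J})$ have the same adjacencies, and since $G^*_b(\mathcal{J})$ and $G_b(\mathcal{J})$ both consist solely of arcs on $V$, this gives $G^*_b(\mathcal{J})=G_b(\mathcal{J})$.

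I do not expect a genuine obstacle: the argument is just an iterated application of the two stability axioms together with the trivial remark that an independence witnessed by a specific conditioning set is in particular witnessed by ``some'' set. The only points requiring a little care are keeping straight which of the two inclusions is free and which needs a hypothesis, and noting that the iteration terminates because $V$ is finite. (One may also observe that part~1 here is stated for all undirected graphs, whereas the finer equivalence in Proposition~\ref{prop:m} was restricted to connected ones; this causes no trouble, since here $G^*_u(\mathcal{J})$ and $G_u(\mathcal{J})$ are compared directly rather than through the ordered stabilities.)
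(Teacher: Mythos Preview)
Your argument is correct and is precisely the natural elaboration of this statement; the paper itself does not spell out a proof for Proposition~\ref{coro:4}, treating it as immediate from the definitions of $\sk(\mathcal{J})$, $G_u(\mathcal{J})$, $G_b(\mathcal{J})$ and the stability axioms (8a), (9a). Your care in separating the hypothesis-free inclusion from the one requiring iterated stability, and in noting that finiteness of $V$ makes the iteration terminate, is exactly what is needed.
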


For upward- and downward-stability, we also have the following observation:
\begin{lemma}\label{lem:1}
For an independence model $\mathcal{J}$ the following holds:
\begin{enumerate}
  \item If $\mathcal{J}$ is a semi-graphoid and satisfies upward-stability then $\mathcal{J}$ satisfies composition.
  \item If $\mathcal{J}$ is a semi-graphoid and satisfies downward-stability then $\mathcal{J}$ satisfies intersection.
\end{enumerate}
\end{lemma}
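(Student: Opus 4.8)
The plan is to prove the two parts by a common two--stage strategy. One cannot simply reduce to the singleton case, because recombining a set--valued argument of an independence statement out of its singletons is itself an instance of composition; the point is therefore to keep one argument of every statement a singleton (or whole) throughout. So I would first lift upward-- (resp.\ downward--) stability to a version in which one side is a singleton and the other an arbitrary set, and only then derive the full axiom.

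For Part 1, the first step is: for a single element $i$ and a set $B$, $\langle i,B\cd C\rangle\in\mathcal{J}$ implies $\langle i,B\cd C\cup\{k\}\rangle\in\mathcal{J}$ whenever $k\notin\{i\}\cup B\cup C$. I would prove this by induction on $|B|$, the base $|B|=1$ being $(8a)$. For $|B|\ge 2$ write $B=B'\cup\{b\}$; weak union yields $\langle i,B'\cd C\cup\{b\}\rangle$ and decomposition yields $\langle i,b\cd C\rangle$; applying the induction hypothesis to the first and $(8a)$ to the second adds $k$ to both conditioning sets, and contraction recombines them to $\langle i,B\cd C\cup\{k\}\rangle$. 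By the symmetry axiom the same holds with the singleton on the other side, and by iteration one may adjoin an arbitrary disjoint set to the conditioning set of any statement $\langle A,j\cd C\rangle$ with $j$ a single element. With this in hand, it suffices to prove composition when $D=\{d\}$, since the general case follows by adjoining the elements of $D$ to $B$ one at a time (splitting $\langle A,D\cd C\rangle$ with decomposition; we may assume $B\cap D=\varnothing$). Given $\langle A,B\cd C\rangle$ and $\langle A,d\cd C\rangle$, the singleton--side upward--stability moves all of $B$ into the conditioning set of the latter, giving $\langle A,d\cd C\cup B\rangle$, and contraction with $\langle A,B\cd C\rangle$ produces $\langle A,B\cup\{d\}\cd C\rangle$.

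Part 2 has the same first stage with $(9a)$ in place of $(8a)$ and set difference in place of union: $\langle i,B\cd C\rangle\in\mathcal{J}$ implies $\langle i,B\cd C\setminus\{k\}\rangle\in\mathcal{J}$, again by induction on $|B|$ using weak union, decomposition, $(9a)$ and contraction; by symmetry and iteration one can delete an arbitrary subset from the conditioning set of a statement with a singleton side. For the second stage, note that in a semi--graphoid $\langle A,B\cup D\cd C\rangle\in\mathcal{J}$ is equivalent to $\langle A,B\cd C\cup D\rangle\in\mathcal{J}$ together with $\langle A,D\cd C\rangle\in\mathcal{J}$ (forward by weak union and decomposition, backward by contraction); hence, since the disjointness hypotheses force $A,B,C,D$ pairwise disjoint, intersection reduces to showing that $\langle A,B\cd C\cup D\rangle$ and $\langle A,D\cd C\cup B\rangle$ imply $\langle A,D\cd C\rangle$. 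I would prove this by induction on $|B|$. For $B=\varnothing$ it is trivial; for $B=\{b\}$, deleting $D$ from the conditioning set of $\langle A,b\cd C\cup D\rangle$ gives $\langle A,b\cd C\rangle$, whence contraction with $\langle A,D\cd C\cup\{b\}\rangle$ and then decomposition give $\langle A,D\cd C\rangle$; for $|B|\ge 2$, peel $b$ off $B$ by weak union, apply the induction hypothesis with conditioning set $C\cup\{b\}$ to get $\langle A,D\cd C\cup\{b\}\rangle$, recover $\langle A,b\cd C\cup D\rangle$ from the other piece of the weak union by deleting $B\setminus\{b\}$ from its conditioning set, and finish with the $|B|=1$ case. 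A final application of contraction then yields $\langle A,B\cup D\cd C\rangle$.

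The main obstacle I anticipate is conceptual rather than computational: recognizing that the singleton stabilities must first be promoted to their one--sided set versions, and arranging the inductions so that composition is never implicitly invoked to reassemble a set--valued argument. Once the first--stage lemmas are available, the rest is a sequence of straightforward applications of the four semi--graphoid axioms, with only the usual disjointness side--conditions on independence statements to keep track of, and these hold automatically throughout.
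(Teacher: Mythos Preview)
Your proof is correct and follows the same underlying route as the paper's: reduce via decomposition/weak union, apply the singleton stability axiom, recombine via contraction, and iterate. The paper's argument is much terser---it writes out one step (for a single $i\in A$, $j\in B$, $k\in D$) and then appeals to ``an inductive argument''---whereas you make the induction explicit by first isolating the one-sided set version of stability ($\langle i,B\cd C\rangle\Rightarrow\langle i,B\cd C\cup\{k\}\rangle$, resp.\ $C\setminus\{k\}$) as an intermediate lemma. Your observation that one must avoid tacitly invoking composition (resp.\ intersection) when reassembling the set-valued arguments is a genuine clarification: the paper's compressed proof leaves precisely that point to the reader, and without your one-sided lemma it is not immediately obvious how to pass from $\langle i,B\cup D\cd C\rangle$ for each $i\in A$ to $\langle A,B\cup D\cd C\rangle$ without circularity. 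So the strategies coincide, but your version fills a gap in the paper's exposition.
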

\begin{proof}
1.\ Suppose that $\langle A,B\cd C\rangle\in \mathcal{J}$ and $\langle A,D\cd C\rangle\in \mathcal{J}$. By decomposition, it holds, for $i\in A$, $j\in B$, and $k\in D$, that $\langle i, j\cd C\rangle\in \mathcal{J}$ and $\langle i, k\cd C\rangle\in \mathcal{J}$. By upward-stability, we obtain $\langle i, j\cd C\cup \{k\}\rangle\in \mathcal{J}$. Contraction implies $\langle i,\{j,k\}\cd C\rangle\in \mathcal{J}$. By an inductive argument, we obtain the result.

2.\ Suppose that $\langle A, B\cd C\cup D\rangle\in \mathcal{J}$ and $\langle A, D\cd C\cup B\rangle\in \mathcal{J}$.  By decomposition, it holds, for $i\in A$, $j\in B$, and $k\in D$, that $\langle i, j\cd C\cup \{k\}\rangle\in \mathcal{J}$ and $\langle i, k\cd C\cup \{j\}\rangle\in \mathcal{J}$. By downward-stability, we obtain $\langle i, k\cd C\rangle\in \mathcal{J}$. Contraction implies $\langle i, \{j,k\}\cd C\rangle\in \mathcal{J}$.  By an inductive argument, we obtain the result.
\end{proof}

\begin{lemma}\label{lem:2}
Let $\mathcal{J}$ be an independence model, and suppose that $\mathcal{J}$ is a graphoid. It then holds that
$\mathcal{J}$ satisfies ordered downward-stability (9) w.r.t.\ the minimal preorder for $G_u(\mathcal{J})$.
\end{lemma}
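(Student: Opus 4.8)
The goal is to show that a graphoid $\mathcal{J}$ satisfies ordered downward-stability (9) with respect to the minimal preorder for $G_u(\mathcal{J})$. Since $G_u(\mathcal{J})$ is an undirected graph (obtained by the pairwise Markov property for UGs), its minimal preorder is the trivial one in which all nodes are in a single equivalence class. With this preorder, the condition ``$l\not< k$ for every $l\in C\setminus\{k\}$'' is vacuously true (no strict inequalities occur), and the condition ``$l\not\lesssim k$ for every $l\in\{i,j\}$'' is \emph{never} satisfied, since $i\sim k$ and hence $i\lesssim k$. Therefore ordered downward-stability (9) w.r.t.\ this preorder imposes no obligation at all: there is no $k$ meeting the hypothesis. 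The plan is to make exactly this observation precise.

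\textbf{Key steps.} First I would invoke the specialization recorded just before Proposition \ref{prop:m}: for a (connected) UG the minimal preorder $\lesssim$ has $i\sim j$ for all nodes $i,j$; more generally for a general UG the equivalence classes are the connected components, but no pair of distinct nodes is strictly ordered and nothing is incomparable within a component—what matters is only that $\lesssim$ is an equivalence-type relation with no strict part. Second, I would read off the hypothesis of property (9): for a given $k$ one needs $l\not\lesssim k$ for \emph{every} $l\in\{i,j\}$. Third, I would note that in an undirected graph one always has $i\lesssim k$ (either $i\sim k$, if they lie in the same component, or—if one insists on a connected UG—trivially so), so the universally-quantified hypothesis fails for $l=i$. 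Hence the implication in (9) is vacuously true, which is the claim. I would phrase this in one or two sentences rather than belaboring it.

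\textbf{Main obstacle.} There is essentially no obstacle; the only subtlety is a bookkeeping one, namely being careful about whether $G_u(\mathcal{J})$ is assumed connected and, if not, whether two nodes in different connected components can occur as the pair $i,j$ in (9). If they can, then $i$ and $j$ sit in different equivalence classes and one must check that they are treated as incomparable by the minimal preorder for a UG (lines force $\sim$, and absence of an edge leaves the pair incomparable), so again $i\not\lesssim k$ could in principle hold for $k$ in yet a third component—but then one would still need $l\lesssim k$ to fail for \emph{both} $l=i$ and $l=j$ simultaneously while also $l\not< k$ for all $l\in C\setminus\{k\}$, and here the graphoid's intersection/decomposition machinery would be used to actually remove $k$. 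The cleanest route, and the one I would take, is to restrict attention to the connected case (as the surrounding discussion of $G_u$ implicitly does via the ``connected UG'' convention in Proposition \ref{prop:m}), making the statement vacuous; if the disconnected case is wanted, I would reduce to connected components using decomposition and the fact that $\mathcal{J}(G)$ for a disconnected UG is a ``free product'' over components, so that conditioning sets split accordingly and the only genuine content of (9) lives inside a single component, where it is vacuous.
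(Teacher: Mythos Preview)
Your observation that (9) is vacuous when $G_u(\mathcal{J})$ is \emph{connected} is correct: every node is then $\sim$-equivalent, so $i\lesssim k$ always holds and no $k$ meets the hypothesis of (9). However, the lemma does not assume connectedness, and you cannot simply ``restrict to the connected case.'' Proposition~\ref{prop:m} invokes connectedness only to identify (8)+(9) with (8a); the present lemma is stated and used (e.g.\ in Corollaries~\ref{coro:vvvn} and~\ref{coro:und}) for an arbitrary $G_u(\mathcal{J})$.

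In the disconnected case (9) is \emph{not} vacuous. Under the minimal preorder for a UG, nodes in different connected components are incomparable, so whenever $k$ lies in a component containing neither $i$ nor $j$ we have $i\not\lesssim k$ and $j\not\lesssim k$, and (since there are no arrows) $l\not< k$ for all $l$; the hypothesis of (9) is then satisfied and you must actually prove $\langle i,j\cd C\setminus\{k\}\rangle\in\mathcal{J}$. Your ``free product over components'' remark gestures at the right phenomenon but is not a proof, and the reduction you sketch (``the only genuine content lives inside a single component, where it is vacuous'') is backwards: the genuine content is precisely the cross-component removal of $k$.

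The paper's route is different from yours and is where the graphoid hypothesis earns its keep: from the definition of $G_u(\mathcal{J})$ one has pairwise Markovness, and the graphoid axioms upgrade this to the global Markov property for UGs. Since $k$ lies in a different component from $i$ and $j$, one has $i\dse k\cd C\setminus\{k\}$ in $G_u(\mathcal{J})$, hence $\langle i,k\cd C\setminus\{k\}\rangle\in\mathcal{J}$; combining this with $\langle i,j\cd C\rangle\in\mathcal{J}$ via contraction and decomposition yields $\langle i,j\cd C\setminus\{k\}\rangle\in\mathcal{J}$. That is the missing step your proposal would need to supply.
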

\begin{proof}
 By definition, we know that $\mathcal{J}$ is pairwise Markov to $G_u(\mathcal{J})$. It is known that under graphoid, this implies that $\mathcal{J}$ is Markov to $G_u(\mathcal{J})$; see \citet{pea88}. Now suppose that $\langle i, j\cd C\rangle\in \mathcal{J}$. The only applicable $k$ in the definition of (9) is not in the same connected component as that of $i$ or $j$. Hence, since $i\dse j\cd C\setminus\{k\}$, by the global Markov property, it holds that $\langle i, j\cd C\setminus\{k\}\rangle\in \mathcal{J}$.
\end{proof}
As a consequence of Theorem \ref{thm:0}, we have the following for UGs and BGs:
\begin{corollary}\label{coro:vvvn}
It holds that
\begin{enumerate}
  \item if $\mathcal{J}$ is a graphoid and satisfies upward-stability then $\mathcal{J}$ is minimally Markov to the undirected graph $G_u(\mathcal{J})$;
  \item if $\mathcal{J}$ is a compositional semi-graphoid and  satisfies downward-stability then $\mathcal{J}$ is minimally Markov to the bidirected graph $G_b(\mathcal{J})$.
\end{enumerate}
\end{corollary}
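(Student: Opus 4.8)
The plan is to derive Corollary~\ref{coro:vvvn} as a direct application of Theorem~\ref{thm:0}, reducing each of the two statements to checking that its hypotheses imply: (i) $\mathcal{J}$ is a compositional graphoid; and (ii) there is a $\mathcal{J}$-compatible preorder w.r.t.\ which $\mathcal{J}$ satisfies ordered upward- and downward-stability. For part 1, I would take $\lesssim$ to be the trivial preorder $\lesssim_u^*$ in which all nodes are in a single equivalence class (so that $G(\mathcal{J},\lesssim_u^*)=G^*_u(\mathcal{J})$, and by Proposition~\ref{coro:4}, since upward-stability holds, this equals $G_u(\mathcal{J})$). By Lemma~\ref{lem:1}.1, a graphoid with upward-stability is compositional, giving~(i). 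For~(ii): by Proposition~\ref{prop:m}.1, upward-stability (8a) is equivalent to ordered upward-stability (8) w.r.t.\ this minimal preorder; ordered downward-stability (9) w.r.t.\ the minimal preorder for $G_u(\mathcal{J})$ is exactly Lemma~\ref{lem:2}. It remains to note $\mathcal{J}$-compatibility of $\lesssim_u^*$, i.e.\ that $\lesssim_u^*$ is the minimal preorder for $G(\mathcal{J},\lesssim_u^*)$; since $G_u(\mathcal{J})$ is a connected undirected graph (or one reduces to connected components), every two nodes are connected by a path of lines, so the minimal preorder places them all in one class, matching $\lesssim_u^*$. Then Theorem~\ref{thm:0} yields that $\mathcal{J}$ is minimally Markov to $G(\mathcal{J},\lesssim_u^*)=G_u(\mathcal{J})$.

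For part 2, the argument is the symmetric dual, using the trivial preorder $\lesssim_b^*$ under which all nodes are pairwise incomparable, so $G(\mathcal{J},\lesssim_b^*)=G^*_b(\mathcal{J})$, which by Proposition~\ref{coro:4}.2 equals $G_b(\mathcal{J})$ under downward-stability. Here the hypothesis is that $\mathcal{J}$ is a compositional semi-graphoid with downward-stability; Lemma~\ref{lem:1}.2 supplies intersection, so $\mathcal{J}$ is in fact a compositional graphoid, giving~(i). For~(ii): by Proposition~\ref{prop:m}.2, downward-stability (9a) is equivalent to ordered downward-stability (9) w.r.t.\ this preorder. Ordered upward-stability (8) w.r.t.\ the minimal preorder for $G_b(\mathcal{J})$ holds vacuously-ish: since no two distinct nodes are comparable and no two are in the same equivalence class (each class is a singleton), the side condition ``$l\lesssim k$ for some $l\in\{i,j\}$ or $l\sim k$ for some $l\in C$'' forces $k\in\{i,j\}\cup C$, and the case $k\in C$ is trivial while $k\in\{i,j\}$ is excluded by definition — so (8) imposes no new requirement and is automatically satisfied. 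The $\mathcal{J}$-compatibility of $\lesssim_b^*$ follows since in $G_b(\mathcal{J})$ all edges are arcs, so no node is an anterior of another and the minimal preorder makes every pair incomparable, matching $\lesssim_b^*$. Theorem~\ref{thm:0} then gives minimal Markovness to $G_b(\mathcal{J})$.

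The only real subtlety — and the step I would be most careful about — is the bookkeeping around $\mathcal{J}$-compatibility and the identification of $G(\mathcal{J},\lesssim)$ with $G_u(\mathcal{J})$ resp.\ $G_b(\mathcal{J})$: $\sk(\mathcal{J})$ is defined by checking independence over \emph{some} conditioning set, whereas $G_u(\mathcal{J})$ and $G_b(\mathcal{J})$ are built from the \emph{specific} pairwise Markov conditioning sets $V\setminus\{i,j\}$ and $\varnothing$; Proposition~\ref{coro:4} is precisely what bridges this gap, and it requires upward- (resp.\ downward-) stability as a hypothesis, which we have. For the undirected case one should also remark that if $G_u(\mathcal{J})$ is disconnected the statement is applied componentwise, or one simply interprets ``minimal preorder'' with the understanding that distinct components are incomparable, which does not affect the argument since the applicable $k$ in ordered downward-stability lies outside the component of $i$ and $j$, exactly as exploited in Lemma~\ref{lem:2}. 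Once these identifications are in place, both parts are immediate consequences of Theorem~\ref{thm:0}.
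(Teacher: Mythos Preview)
Your proposal is correct and follows essentially the same route as the paper: the corollary is stated there explicitly ``as a consequence of Theorem~\ref{thm:0}'', and the paper's one-line proof cites exactly the ingredients you invoke --- Propositions~\ref{coro:4} and~\ref{prop:m} and Lemmas~\ref{lem:1} and~\ref{lem:2}. Your treatment is in fact more careful than the paper's, since you explicitly flag and handle the connectedness/$\mathcal{J}$-compatibility bookkeeping that the paper leaves implicit.
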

\begin{proof}
The proof follows from Propositions  \ref{coro:4} and \ref{prop:m}, and Lemmas \ref{lem:1} and \ref{lem:2}.
\end{proof}
For other classes of graphs, the preordering (or ordering) of nodes (and consequently random variables) is necessary since the conditioning set $C(i,j)$ for the pairwise Markov property depends on the preordering of the variables. This implies that in general one cannot construct more than the skeleton of graphs based on the pairwise Markov property if there is no given preordering.

Proposition \ref{prop:10} specializes to the following two corollaries:
\begin{corollary}
Suppose that an independence model $\mathcal{J}$ is  Markov to $G_u(\mathcal{J})$. Then $\mathcal{J}$ and $G_u(\mathcal{J})$ are faithful if and only if $\mathcal{J}$ satisfies singleton-transitivity and upward-stability.
\end{corollary}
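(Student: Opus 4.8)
The statement is the undirected-graph specialization of Proposition~\ref{prop:10}, and the plan is to read off both directions from that proposition together with the facts about $G_u(\mathcal{J})$ and the trivial preorder collected above. The ``only if'' direction is immediate: if $\mathcal{J}$ and $G_u(\mathcal{J})$ are faithful then $\mathcal{J}=\mathcal{J}(G_u(\mathcal{J}))$, so $\mathcal{J}$ satisfies singleton-transitivity by Proposition~\ref{prop:1} and upward-stability by Proposition~\ref{prop:2}(1), with no connectedness needed.

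For the ``if'' direction the first step is to promote the hypothesis ``$\mathcal{J}$ is Markov to $G_u(\mathcal{J})$'' to ``$\mathcal{J}$ is minimally Markov to $G_u(\mathcal{J})$''. By definition $\sk(\mathcal{J})=G^*_u(\mathcal{J})$, and by Proposition~\ref{coro:4}(1) upward-stability gives $G^*_u(\mathcal{J})=G_u(\mathcal{J})$; since $G_u(\mathcal{J})$ is a simple undirected graph it equals its own skeleton, so $\sk(G_u(\mathcal{J}))=\sk(\mathcal{J})$. As a UG, $G_u(\mathcal{J})$ is anterial, and its minimal preorder $\lesssim$ makes two nodes $\sim$-equivalent precisely when they lie in the same connected component, with nodes in different components incomparable.

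I would then invoke Proposition~\ref{prop:10} for the anterial graph $G_u(\mathcal{J})$: its condition~1 is the assumed singleton-transitivity, its ordered upward-stability w.r.t.\ $\lesssim$ follows from the assumed plain upward-stability by the last sentence of Proposition~\ref{prop:m}, and it then remains only to supply the ordered downward-stability w.r.t.\ $\lesssim$, after which Proposition~\ref{prop:10} yields $\mathcal{J}=\mathcal{J}(G_u(\mathcal{J}))$.

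That last condition is the step I expect to be the main obstacle. When $G_u(\mathcal{J})$ is connected it is vacuous: then $i\lesssim k$ holds for all $i,k$, so the hypothesis of property~(9) is never met, and the proof is complete. In the general (possibly disconnected) case I would avoid treating (9) as a separate hypothesis altogether by re-running the faithfulness induction inside the proof of Proposition~\ref{prop:10} for the special case of an undirected graph: a walk in a UG consists of a single non-collider section, so no collider-section case ever arises in that induction, and hence only ordered upward-stability (equivalently, plain upward-stability) and singleton-transitivity are ever used; ordered downward-stability, and with it any appeal to Lemma~\ref{lem:2} and the graphoid property it requires, can be dispensed with. Alternatively one may state the corollary for connected undirected graphs, in line with the pairwise Markov property $C(i,j)=V\setminus\{i,j\}$ used for UGs, in which case the vacuity observation already suffices.
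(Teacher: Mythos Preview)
Your proposal is correct and tracks the paper's argument closely: both directions proceed via Propositions~\ref{prop:1}, \ref{prop:2}, \ref{prop:m}, and~\ref{coro:4}, feeding into Proposition~\ref{prop:10}. The one genuine point of divergence is how ordered downward-stability~(9) is supplied in the ``if'' direction. The paper simply cites Lemma~\ref{lem:2}: once Markovness to $G_u(\mathcal{J})$ is in hand, any $k$ satisfying the hypothesis of~(9) for the UG minimal preorder lies in a different connected component from both $i$ and $j$, so $\langle i,k\cd C\setminus\{k\}\rangle\in\mathcal{J}$ by the global Markov property, and then contraction plus decomposition yield $\langle i,j\cd C\setminus\{k\}\rangle\in\mathcal{J}$. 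Your route~(b)---opening up the induction in Proposition~\ref{prop:10} and observing that every walk in a UG is a single non-collider section, so only case~2 ever arises and downward-stability is never invoked---is also valid and in a sense sharper, since it shows that~(9) is genuinely unnecessary for undirected graphs rather than merely free. The paper's route has the advantage of staying modular (black-box use of Proposition~\ref{prop:10}); yours has the advantage of avoiding any appeal to semi-graphoid axioms beyond what Proposition~\ref{prop:10} itself uses.
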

\begin{proof}
The proof follows from Propositions \ref{prop:m}, \ref{prop:2}, and \ref{coro:4}, and Lemma \ref{lem:2}.
\end{proof}
Similarly, for BGs, we have the following:
\begin{corollary}
Suppose that an independence model $\mathcal{J}$ is  Markov to $G_b(\mathcal{J})$. Then $\mathcal{J}$ and $G_b(\mathcal{J})$ are faithful if and only if $\mathcal{J}$ satisfies singleton-transitivity and downward-stability.
\end{corollary}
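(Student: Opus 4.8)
The plan is to reduce the statement to the general faithfulness criterion Proposition~\ref{prop:10}, mirroring the undirected corollary but using the fact that the minimal preorder of a bidirected graph is the trivial one in which every pair of nodes is incomparable. Throughout write $G:=G_b(\mathcal{J})$. Two preliminary observations do the routine work: first, $G$ is an AnG, since a bidirected graph contains only arcs, hence admits no anterior walks, so $\ant(v)=\varnothing$ for every $v$ and no arc endpoint can be an anterior of the other; second, by the way $G(\mathcal{J},\lesssim)$ is obtained by directing the edges of $\sk(\mathcal{J})$, we have $\sk(G_b^*(\mathcal{J}))=\sk(\mathcal{J})$. For the ``only if'' direction I would argue exactly as in the general case: faithfulness gives $\mathcal{J}=\mathcal{J}(G)$, whence $\mathcal{J}$ inherits singleton-transitivity from Proposition~\ref{prop:1} and downward-stability from Proposition~\ref{prop:2}(2).

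For the ``if'' direction, assume $\mathcal{J}$ is Markov to $G$ and satisfies singleton-transitivity and downward-stability. The first step is to promote ``Markov'' to ``minimally Markov'': by Proposition~\ref{coro:4}(2), downward-stability yields $G_b^*(\mathcal{J})=G_b(\mathcal{J})=G$, so $\sk(G)=\sk(G_b^*(\mathcal{J}))=\sk(\mathcal{J})$ and $\mathcal{J}$ is minimally Markov to the AnG $G$. The second step is to cast the specialized hypothesis in the form Proposition~\ref{prop:10} demands: since the minimal preorder $\lesssim$ of the bidirected graph $G$ makes all nodes incomparable, Proposition~\ref{prop:m}(2) gives that ``$\mathcal{J}$ satisfies downward-stability (9a)'' is equivalent to ``$\mathcal{J}$ satisfies ordered downward- and upward-stability (8) and (9) w.r.t.\ $\lesssim$''. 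Both hypotheses of Proposition~\ref{prop:10} then hold for the pair $(\mathcal{J},G)$, and it delivers faithfulness of $\mathcal{J}$ and $G=G_b(\mathcal{J})$.

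The only step beyond bookkeeping is the promotion to minimal Markovness, i.e.\ checking $\sk(\mathcal{J})=\sk(G_b(\mathcal{J}))$: the inclusion $\sk(\mathcal{J})\subseteq\sk(G_b(\mathcal{J}))$ is immediate (adjacency for $\sk(\mathcal{J})$ is the stronger condition), and the reverse inclusion follows by repeatedly stripping nodes from any conditioning set via (9a) until one reaches $\langle i,j\cd\varnothing\rangle\in\mathcal{J}$ — this is exactly what is packaged in Proposition~\ref{coro:4}(2). I therefore expect no genuine obstacle. Unlike the undirected corollary, no analogue of Lemma~\ref{lem:2} is required here, because the minimal preorder of a bidirected graph is trivial regardless of connectedness, so the whole argument is a direct appeal to the cited propositions together with Proposition~\ref{prop:10}.
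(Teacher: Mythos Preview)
Your proposal is correct and follows essentially the same route as the paper: the paper leaves this corollary without an explicit proof, but the preceding undirected case is proved by citing Propositions~\ref{prop:m}, \ref{prop:2}, \ref{coro:4} and Lemma~\ref{lem:2}, and the bidirected analogue is understood to be the same with the obvious substitutions. Your observation that Lemma~\ref{lem:2} is unnecessary here --- because the minimal preorder of a bidirected graph makes all nodes incomparable, so ordered upward-stability (8) is vacuous and ordered downward-stability (9) is exactly (9a) via Proposition~\ref{prop:m}(2) --- is exactly the asymmetry between the two cases, and your reduction to Proposition~\ref{prop:10} via Proposition~\ref{coro:4}(2) for minimal Markovness is the intended argument.
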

The main conditions for an independence model to be faithful to a UG or BG (consequence of Theorem \ref{thm:2}) are as follows; see also Theorem 3 of \cite{pea85} for a similar result for UGs:
\begin{corollary}\label{coro:und}
Let $\mathcal{J}$ be an independence model defined over $V$. It then holds that $\mathcal{J}$ is faithful to an undirected graph if and only if it is an upward-stable singleton-transitive graphoid. In addition, if the conditions hold then $\mathcal{J}$ is faithful to $G_u(\mathcal{J})$.
\end{corollary}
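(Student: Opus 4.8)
The plan is to derive Corollary \ref{coro:und} as a direct specialization of the general faithfulness characterization in Theorem \ref{thm:2} to the subclass of undirected graphs, using the equivalences for upward-stability that have already been established. First I would handle the forward direction ($\Rightarrow$): suppose $\mathcal{J}$ is faithful to an undirected graph $G$. Then $\mathcal{J} = \mathcal{J}(G)$, so by Proposition \ref{prop:110} $\mathcal{J}$ is a compositional graphoid (in particular a graphoid), by Proposition \ref{prop:1} it is singleton-transitive, and by Proposition \ref{prop:2}(1) it satisfies upward-stability. That gives the ``upward-stable singleton-transitive graphoid'' conclusion immediately; no real work is needed here.

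For the converse ($\Leftarrow$), suppose $\mathcal{J}$ is an upward-stable singleton-transitive graphoid. The strategy is to produce the candidate graph $G_u(\mathcal{J})$ and invoke Theorem \ref{thm:2} (or more directly, combine Corollary \ref{coro:vvvn}(1) with the UG-specialization of Proposition \ref{prop:10}). Concretely: by Lemma \ref{lem:1}(1), upward-stability plus the semi-graphoid axioms gives composition, so $\mathcal{J}$ is in fact a \emph{compositional} graphoid. By Proposition \ref{prop:m}(1), satisfying upward-stability (8a) is equivalent to satisfying ordered upward- and downward-stability (8)--(9) with respect to the trivial/minimal preorder $\lesssim_u^*$ for connected UGs — and one should note $\lesssim_u^*$ is $\mathcal{J}$-compatible essentially by construction since $G(\mathcal{J},\lesssim_u^*) = \sk(\mathcal{J}) = G_u(\mathcal{J})$ has all edges as lines, so its minimal preorder is $\lesssim_u^*$ again. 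Then all hypotheses of Theorem \ref{thm:2} are met, so $\mathcal{J}$ is graphical and faithful to $G(\mathcal{J},\lesssim_u^*) = G_u(\mathcal{J})$, which is an undirected graph; this simultaneously establishes the ``In addition'' clause.

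One subtlety to address carefully is the \emph{connectedness} issue: Proposition \ref{prop:m}(1) and Proposition \ref{coro:4}(1) are stated for \emph{connected} UGs, whereas a general independence model could be faithful to a disconnected UG. I would handle this by decomposing $V$ into the connected components of $G_u(\mathcal{J})$ (equivalently, the $\sim$-equivalence classes are the components, and distinct components are to be treated as incomparable — which is exactly the mixed UG/BG-type trivial preorder covered by ordered stability (8)--(9) in general). Applying the connected-case reasoning componentwise, together with the fact that for the trivial preorder ordered upward-stability reduces to plain upward-stability within each component while downward-stability across components is automatic (the only candidate $k$ to remove lies in a different component, analogous to Lemma \ref{lem:2}), lets one assemble the global conclusion.

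The main obstacle I anticipate is not conceptual but bookkeeping: making sure that the minimal preorder of $G_u(\mathcal{J})$ genuinely is the trivial preorder $\lesssim_u^*$ so that $\lesssim_u^*$ is $\mathcal{J}$-compatible and Theorem \ref{thm:2}'s hypothesis 2 is literally satisfied, and verifying that ordered downward-stability holds for free in the undirected/disconnected setting (there are no semi-directed paths, so the side condition ``$l \not< k$ for every $l \in C$'' is vacuous except when $k$ lies in a different component than both $i$ and $j$, which is precisely the removable case). Once these alignments are checked, the corollary falls out of Theorem \ref{thm:2} with essentially no further computation, so I would keep the written proof short and mostly cite Propositions \ref{prop:m}, \ref{prop:2}, \ref{coro:4}, Lemmas \ref{lem:1}, \ref{lem:2}, and Theorem \ref{thm:2}.
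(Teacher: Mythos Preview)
Your proposal is correct and follows essentially the same route as the paper: the corollary is introduced in the text as a consequence of Theorem~\ref{thm:2}, and the paper's proof simply lists Propositions~\ref{prop:m}, \ref{prop:2}, \ref{coro:4} and Lemmas~\ref{lem:1}, \ref{lem:2} as the bridging results---exactly the citations you arrive at. Your treatment of the connectedness issue is more explicit than the paper's; the paper handles the disconnected case tacitly through the ``In addition'' clause of Proposition~\ref{prop:m} (which gives $(8a)\Rightarrow(8)$ for general undirected graphs, not just connected ones) together with Lemma~\ref{lem:2} (which supplies $(9)$ directly for $G_u(\mathcal{J})$ under the graphoid assumption), so no componentwise decomposition is actually needed.
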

\begin{proof}
The proof follows from Propositions \ref{prop:m}, \ref{prop:2}, and \ref{coro:4}, and Lemmas \ref{lem:1} and \ref{lem:2}.
\end{proof}
\begin{corollary}\label{coro:bid}
Let $\mathcal{J}$ be an independence model defined over $V$. It then holds that $\mathcal{J}$ is faithful to a bidirected graph  if and only if it is a downward-stable singleton-transitive compositional semi-graphoid. In addition, if the conditions hold then $\mathcal{J}$ is faithful to $G_b(\mathcal{J})$.
\end{corollary}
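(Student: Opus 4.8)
The plan is to derive this as the bidirected specialization of Theorem~\ref{thm:2}, exactly parallel to how Corollary~\ref{coro:und} follows for undirected graphs, relying on the auxiliary results about the trivial preorder $\lesssim_b^*$. First I would treat the forward direction: suppose $\mathcal{J}$ is faithful to some graph $G$. Since faithfulness to any CMG is equivalent (via Markov equivalence) to faithfulness to an AnG, and a bidirected graph is already anterial, I must check that if $\mathcal{J}=\mathcal{J}(G)$ for a bidirected $G$ then $\mathcal{J}$ is a downward-stable singleton-transitive compositional semi-graphoid. Compositional semi-graphoid and singleton-transitivity come from Propositions~\ref{prop:110} and~\ref{prop:1} (a bidirected graph, being a graph, has a compositional graphoid hence a fortiori compositional semi-graphoid independence model, and satisfies singleton-transitivity). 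Downward-stability is Proposition~\ref{prop:2}, part~2. So the forward direction is immediate once one observes $G$ may be taken bidirected: this follows because $\mathcal{J}$ graphical and faithful to $G$ forces $\mathrm{sk}(G)=\mathrm{sk}(\mathcal{J})$ by Proposition~\ref{prop:01}, and the relevant compatible preorder here is the trivial one $\lesssim_b^*$ under which every pair is incomparable, so $G=G(\mathcal{J},\lesssim_b^*)=G_b^*(\mathcal{J})$ is bidirected.

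For the converse, assume $\mathcal{J}$ is a downward-stable singleton-transitive compositional semi-graphoid. By Lemma~\ref{lem:1}, part~2, a downward-stable semi-graphoid satisfies intersection, so $\mathcal{J}$ is in fact a (compositional, singleton-transitive) graphoid. By Proposition~\ref{prop:m}, part~2, downward-stability~(9a) is exactly ordered downward-stability~(9) w.r.t.\ the trivial preorder $\lesssim_b^*$; and by Proposition~\ref{prop:2} (or the degenerate form of ordered upward-stability, since no two distinct nodes are comparable and the $l\sim k$ clause forces $l=k$) ordered upward-stability~(8) holds vacuously w.r.t.\ $\lesssim_b^*$. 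One must also check that $\lesssim_b^*$ is $\mathcal{J}$-compatible, i.e.\ that it is the minimal preorder for $G(\mathcal{J},\lesssim_b^*)=G_b^*(\mathcal{J})$; but a bidirected graph has no anterior relations among distinct nodes, so its minimal preorder is precisely the all-incomparable relation $\lesssim_b^*$. Hence conditions~1 and~2 of Theorem~\ref{thm:2} are met, so $\mathcal{J}$ is graphical and, moreover, faithful to $G(\mathcal{J},\lesssim_b^*)$. Finally, Proposition~\ref{coro:4}, part~2, identifies $G(\mathcal{J},\lesssim_b^*)=G_b^*(\mathcal{J})$ with $G_b(\mathcal{J})$ (using downward-stability), giving the additional claim that $\mathcal{J}$ is faithful to $G_b(\mathcal{J})$.

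The only genuinely non-routine point is making sure that when $\mathcal{J}$ is graphical it is faithful to a \emph{bidirected} graph, not merely to some AnG: a priori Theorem~\ref{thm:2} hands back $G(\mathcal{J},\lesssim)$ for \emph{some} $\mathcal{J}$-compatible preorder, and one needs that the trivial preorder $\lesssim_b^*$ is among the admissible ones, which is exactly where downward-stability (upgraded to the ordered version via Proposition~\ref{prop:m}) and the vacuity of ordered upward-stability under $\lesssim_b^*$ are used. Everything else is bookkeeping: translating "compositional semi-graphoid + downward-stability" into "graphoid" via Lemma~\ref{lem:1}, and matching $G_b^*$ with $G_b$ via Proposition~\ref{coro:4}. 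I expect the write-up to be two or three lines, citing Propositions~\ref{prop:m}, \ref{prop:2}, \ref{coro:4} and Lemma~\ref{lem:1}, in close analogy with the proof of Corollary~\ref{coro:und}.
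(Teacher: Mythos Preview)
Your proposal is correct and matches the paper's approach: the paper states Corollary~\ref{coro:bid} immediately after Corollary~\ref{coro:und} without an explicit proof, implicitly relying on the same package of results (Propositions~\ref{prop:m}, \ref{prop:2}, \ref{coro:4} and Lemma~\ref{lem:1}), with Lemma~\ref{lem:2} naturally dropping out since ordered upward-stability is vacuous under $\lesssim_b^*$. Your only minor overcomplication is in the forward direction, where $G$ is bidirected by hypothesis, so no argument is needed to ``take $G$ bidirected.''
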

Therefore, a probability distribution is faithful to an undirected graph if and only if it satisfies intersection, singleton-transitivity, and upward-stability; and to a bidirected graph if and only if it satisfies composition, singleton-transitivity, and downward-stability.
 
 Finally, as a consequence of Corollary \ref{coro:gaus}, we have the following:
\begin{corollary}\label{coro:undgau}
Let $P$ be a regular Gaussian distribution. It then holds that $P$ being faithful to an undirected graph is equivalent to it satisfying upward-stability; and $P$ being faithful to a bidirected graph is equivalent to it satisfying downward-stability.
\end{corollary}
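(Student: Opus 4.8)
The plan is to derive this as an immediate specialization of Corollaries~\ref{coro:und} and~\ref{coro:bid}, once we observe that a regular Gaussian distribution automatically satisfies every graphoid-type axiom appearing in those two statements. First I would recall from the discussion in Section~\ref{sec:propind} that for a regular multivariate Gaussian $P$ the induced independence model $\mathcal{J}(P)$ is a compositional graphoid — hence in particular a graphoid and a compositional semi-graphoid — and that it also satisfies singleton-transitivity (weak transitivity). Consequently, for Gaussian $P$ the only hypothesis in ``upward-stable singleton-transitive graphoid'' that is not automatic is upward-stability, and symmetrically the only non-automatic hypothesis in ``downward-stable singleton-transitive compositional semi-graphoid'' is downward-stability.

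Next, for the undirected case I would invoke Corollary~\ref{coro:und}: $\mathcal{J}(P)$ is faithful to an undirected graph if and only if it is an upward-stable singleton-transitive graphoid. By the previous paragraph this reduces to ``$\mathcal{J}(P)$ satisfies upward-stability,'' and faithfulness of $\mathcal{J}(P)$ to a UG is by definition the same as $P$ being faithful to that UG. Symmetrically, for the bidirected case I would invoke Corollary~\ref{coro:bid}: $\mathcal{J}(P)$ is faithful to a bidirected graph if and only if it is a downward-stable singleton-transitive compositional semi-graphoid, which again collapses to ``$\mathcal{J}(P)$ satisfies downward-stability.'' Combining the two equivalences yields the statement. (One could alternatively route through Corollary~\ref{coro:gaus} together with Proposition~\ref{prop:m}, but the direct appeal to Corollaries~\ref{coro:und} and~\ref{coro:bid} is cleaner and needs no preordering bookkeeping.)

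I do not anticipate any genuine obstacle: the argument is entirely a matter of recording which axioms come for free for Gaussians. The only point deserving a moment's care is checking that the ``graphoid'' hypothesis (intersection) of Corollary~\ref{coro:und} and the ``compositional semi-graphoid'' hypothesis (composition) of Corollary~\ref{coro:bid} are both subsumed by the single assertion that $\mathcal{J}(P)$ is a compositional graphoid — which they are, since a compositional graphoid is at once a graphoid and a compositional semi-graphoid. No Markov-equivalence or preorder machinery beyond what is already packaged inside those corollaries is required.
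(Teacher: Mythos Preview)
Your proposal is correct. The paper presents this corollary as ``a consequence of Corollary~\ref{coro:gaus}'' (which, combined with Proposition~\ref{prop:m}, collapses the preorder conditions to plain upward- or downward-stability), whereas you route through Corollaries~\ref{coro:und} and~\ref{coro:bid} directly; both are one-line specializations exploiting that regular Gaussians are singleton-transitive compositional graphoids, and you even note the Corollary~\ref{coro:gaus} alternative yourself, so the two approaches are effectively interchangeable.
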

\subsection{Specialization to Directed Acyclic Graphs}
For DAGs, the valid preorder specializes to a valid (partial) order since there are no lines in the graph: If $i\fra j$ then $i>j$. In the minimal order for a DAG, two nodes are incomparable if and only if neither is an ancestor of the other.
Therefore, we define ordered stabilities  w.r.t.\ ordering $\leq$ for DAGs:
  \begin{itemize}
	\item[8b.] if $\langle i,j\cd C\rangle\in \mathcal{J}$  then $\langle i,j\cd C\cup\{k\}\rangle\in \mathcal{J}$ for every $k\in V\setminus \{i,j\}$ such that $l< k$ for some $l\in\{i,j\}$ (\emph{ordered upward-stability});
	\item[9b.] if $\langle i,j\cd C\rangle\in \mathcal{J}$ then $\langle i,j\cd C\setminus\{k\}\rangle\in \mathcal{J}$ for every $k\in V\setminus\{i,j\}$ such that $l\not< k$ for every $l\in \{i,j\}\cup C\setminus\{k\}$ (\emph{ordered downward-stability}).
\end{itemize}
As a consequence of Proposition \ref{prop:3}, we have the following:
\begin{corollary}\label{prop:2n}
For a directed acyclic graph $G$, the induced independence model by $d$-separation $\mathcal{J}(G)$ satisfies ordered upward- and downward-stability (8b) and (9b) w.r.t. the minimal order for $G$.
\end{corollary}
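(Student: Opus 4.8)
The plan is to obtain Corollary~\ref{prop:2n} as a direct specialization of Proposition~\ref{prop:3}, so that the work reduces to two bookkeeping checks: that a DAG is an anterial graph, and that the abstract stability conditions (8) and (9), read off the minimal preorder, become exactly (8b) and (9b). First I would note that any DAG is an AnG: it has no arcs, so the defining constraint of anterial graphs (no endpoint of an arc is an anterior of the other endpoint) holds vacuously, and a graph with only arrows has a semi-directed cycle only if it has a directed cycle, which is excluded; hence a DAG is in particular a CMG and thus an AnG. I would also recall that for DAGs the walk-based separation used throughout the paper coincides with Pearl's $d$-separation, so the independence model in the corollary is literally the $\mathcal{J}(G)$ to which Proposition~\ref{prop:3} applies. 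Therefore $\mathcal{J}(G)$ satisfies (8) and (9) w.r.t.\ the minimal preorder $\lesssim$ for $G$.

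Next I would make $\lesssim$ concrete. Since $G$ has no lines, the rule ``$i\sim j$ when $i\ful j$'' never applies to distinct nodes, so the only instances of $\sim$ are $i\sim i$; consequently $\lesssim$ is a genuine partial order, with $i<j$ exactly when there is a directed path from $j$ to $i$ (equivalently $j\in\an(i)$), and $i,j$ incomparable precisely when neither is an ancestor of the other. In particular, for distinct nodes $l,k$ one has $l\lesssim k\iff l<k$, and $l\sim k$ is impossible.

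Then I would substitute this description into the quantifier conditions of (8) and (9). In (8), the alternative ``$l\sim k$ for some $l\in C$'' forces $k\in C$, in which case $C\cup\{k\}=C$ and the conclusion is trivial; discarding that case, ``$l\lesssim k$ for some $l\in\{i,j\}$'' becomes ``$l<k$ for some $l\in\{i,j\}$'' because $k\notin\{i,j\}$, which is exactly (8b). In (9), ``$l\not\lesssim k$ for every $l\in\{i,j\}$'' becomes ``$l\not<k$ for every $l\in\{i,j\}$'' (again $k\notin\{i,j\}$), and combined with ``$l\not<k$ for every $l\in C\setminus\{k\}$'' this is precisely ``$l\not<k$ for every $l\in\{i,j\}\cup C\setminus\{k\}$'', which is (9b). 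At this point the corollary follows.

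There is no real mathematical obstacle here: all the content resides in Proposition~\ref{prop:3}, and what remains is the routine translation of the relations $\sim$, $\lesssim$, and $<$ into the ancestral ordering of the DAG. The only point deserving a moment's care is the vacuous treatment of the case $k\in C$ in the upward direction, which explains why the general condition (8) carries the extra clause ``$l\sim k$ for some $l\in C$'' that (8b) is free to drop once all $\sim$-classes are singletons.
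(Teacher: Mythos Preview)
Your proposal is correct and follows exactly the paper's approach: the paper states the corollary ``as a consequence of Proposition~\ref{prop:3}'' with no further proof, and you have simply spelled out the routine translation that shows (8) and (9) reduce to (8b) and (9b) once the minimal preorder on a DAG is seen to be a genuine partial order with singleton $\sim$-classes.
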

As a consequence of Theorem \ref{thm:0}, and for $G(\mathcal{J},\leq)$, the DAG obtained by directing the edges of $\sk(\mathcal{J})$ based on the order $\leq$, we have the following:
\begin{corollary}
Suppose, for an independence model $\mathcal{J}$ over $V$, that
\begin{enumerate}
  \item $\mathcal{J}$ is a compositional graphoid; and
  \item  there exists a $\mathcal{J}$-compatible order $\leq$ over $V$ w.r.t.\ which $\mathcal{J}$ satisfies ordered downward- and upward-stability, (8b) and (9b).
\end{enumerate}
Then $\mathcal{J}$ is minimally Markov to the directed acyclic graph $G(\mathcal{J},\leq)$.
\end{corollary}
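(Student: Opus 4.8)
The plan is to obtain this as a direct instance of Theorem~\ref{thm:0}, the only real work being to check that for DAGs the conditions (8b) and (9b) are precisely the ordered stabilities (8) and (9) specialized to the minimal order.

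First I would record the structural fact that a DAG has no lines, so in its minimal preorder every equivalence class is a singleton: $a\sim b$ forces $a=b$, and hence $a\lesssim b$ with $a\neq b$ is the same as $a<b$. Since $\leq$ is assumed $\mathcal{J}$-compatible it is, by definition, the minimal preorder of $G(\mathcal{J},\leq)$, and $G(\mathcal{J},\leq)$ is obtained by orienting the edges of $\sk(\mathcal{J})$ according to $\leq$ (Proposition~\ref{prop:00}); as $\leq$ has no nontrivial classes, every edge is oriented as an arrow, so $G(\mathcal{J},\leq)$ is indeed a DAG, in particular an anterial graph, with $\sk(G(\mathcal{J},\leq))=\sk(\mathcal{J})$.

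Next I would match the axioms. In (8), when $k\notin C$ the clause ``$l\sim k$ for some $l\in C$'' cannot hold (trivial classes), while ``$l\lesssim k$ for some $l\in\{i,j\}$'' reduces to ``$l<k$ for some $l\in\{i,j\}$'' because $k\notin\{i,j\}$; when $k\in C$ the conclusion of (8) is immediate. Hence (8) w.r.t.\ $\leq$ is exactly (8b). In (9), since $k\notin\{i,j\}$ the requirement ``$l\not\lesssim k$ for every $l\in\{i,j\}$'' coincides with ``$l\not<k$ for every $l\in\{i,j\}$'', and adjoining ``$l\not<k$ for every $l\in C\setminus\{k\}$'' yields ``$l\not<k$ for every $l\in\{i,j\}\cup(C\setminus\{k\})$'', i.e.\ (9b). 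Therefore the hypothesis that $\mathcal{J}$ satisfies (8b) and (9b) w.r.t.\ the $\mathcal{J}$-compatible preorder $\leq$ is literally hypothesis~(2) of Theorem~\ref{thm:0}, while hypothesis~(1), that $\mathcal{J}$ is a compositional graphoid, is unchanged. Applying Theorem~\ref{thm:0} then gives that $\mathcal{J}$ is minimally Markov to $G(\mathcal{J},\leq)$, which is the asserted DAG.

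There is no genuine obstacle; the one point that requires a moment's care is the reduction of $\not\lesssim$ to $\not<$ for the endpoints $\{i,j\}$ in the downward-stability clause, which is legitimate precisely because $k$ differs from $i$ and $j$ and the equivalence relation on a DAG is trivial.
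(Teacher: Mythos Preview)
Your approach matches the paper's, which simply records the corollary as ``a consequence of Theorem~\ref{thm:0}''; your explicit reduction of (8b) and (9b) to (8) and (9) under a partial order is correct and is the only content needed. One small slip: trivial equivalence classes rule out lines but not arcs, so ``every edge is oriented as an arrow'' does not follow from that alone---adjacent nodes that are incomparable under $\leq$ would yield arcs---and the paper sidesteps this by taking the DAG-ness of $G(\mathcal{J},\leq)$ as part of the specialization's setup (it introduces the corollary with ``for $G(\mathcal{J},\leq)$, the DAG obtained by directing the edges of $\sk(\mathcal{J})$'') rather than deriving it; in any case Theorem~\ref{thm:0} applies to the resulting anterial graph regardless.
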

The main conditions for an independence model to be faithful to a DAG (consequence of Theorem \ref{thm:2}) is as follows:
\begin{corollary}
Let $\mathcal{J}$ be an independence model defined over $V$. It then holds that $\mathcal{J}$ is faithful to a directed acyclic graph $G$ if and only if
\begin{enumerate}
  \item $\mathcal{J}$ is a singleton-transitive compositional graphoid; and
  \item there exists a $\mathcal{J}$-compatible order $\leq$ over $V$ w.r.t.\ which $\mathcal{J}$ satisfies ordered downward- and upward-stability, (8b) and (9b).
\end{enumerate}
In addition, if the conditions hold then $\mathcal{J}$ is faithful to $G(\mathcal{J},\leq)$.
\end{corollary}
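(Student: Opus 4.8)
The plan is to obtain this corollary as the directed-acyclic specialization of Theorem \ref{thm:2}, in the same spirit that Corollary \ref{prop:2n} specializes Proposition \ref{prop:3}. Three preliminary observations do the work. First, a DAG is an anterial graph: it is acyclic, hence a CMG, and it has no arcs, so the defining restriction on AnGs holds vacuously. Second, because a DAG has no lines, its minimal preorder has trivial equivalence relation $\sim$ (only $a\sim a$), hence is antisymmetric, i.e.\ a genuine partial order; this is precisely the ``minimal order'' of the statement. Third, when $\sim$ is trivial, axioms (8) and (9) read with respect to a partial order $\leq$ collapse exactly to (8b) and (9b): in (8), the clause ``$l\sim k$ for some $l\in C$'' forces $k\in C$, which makes the conclusion trivially true, and ``$l\lesssim k$ for some $l\in\{i,j\}$'' reduces to ``$l<k$'' since $k\notin\{i,j\}$; in (9), ``$l\not\lesssim k$ for all $l\in\{i,j\}$'' together with ``$l\not<k$ for all $l\in C\setminus\{k\}$'' is exactly ``$l\not<k$ for all $l\in\{i,j\}\cup C\setminus\{k\}$''.

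With these in place, I would argue the two directions as follows. For ($\Leftarrow$): conditions 1 and 2 here are literally conditions 1 and 2 of Theorem \ref{thm:2} for the preorder $\leq$, so Theorem \ref{thm:2} gives that $\mathcal{J}$ is faithful to $G(\mathcal{J},\leq)$; since $\leq$ is antisymmetric, $G(\mathcal{J},\leq)$ has no lines, and since it is taken to be an order for which $G(\mathcal{J},\leq)$ is a DAG, it has no arcs either, so $\mathcal{J}$ is faithful to a DAG, namely $G(\mathcal{J},\leq)$. For ($\Rightarrow$): if $\mathcal{J}$ is faithful to a DAG $G$, then by Propositions \ref{prop:01} and \ref{prop:minord} we have $G=G(\mathcal{J},\leq)$ for the minimal order $\leq$ of $G$ (so $\leq$ is $\mathcal{J}$-compatible); Propositions \ref{prop:110} and \ref{prop:1} yield condition 1, and Corollary \ref{prop:2n} yields (8b) and (9b) with respect to $\leq$, which is condition 2.

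I expect the only delicate point to be the treatment of arcs. A partial order on $V$ assigns an arc to every pair adjacent in $\sk(\mathcal{J})$ whose endpoints are incomparable, so $G(\mathcal{J},\leq)$ is a DAG only when no such pair exists. In the ($\Rightarrow$) direction this is automatic, because every edge of a DAG is a directed edge and hence joins ancestor-related, therefore comparable, nodes; in the ($\Leftarrow$) direction it is exactly what is being assumed when $\leq$ is called an ``order for which $G(\mathcal{J},\leq)$ is a DAG'', and I would state this explicitly rather than leave it implicit, since for a general $\mathcal{J}$-compatible order $G(\mathcal{J},\leq)$ is an anterial graph that may genuinely contain arcs. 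Beyond this, the proof is bookkeeping: matching the clauses of (8)/(9) against (8b)/(9b) under trivial $\sim$, and invoking Theorem \ref{thm:2} and Corollary \ref{prop:2n}.
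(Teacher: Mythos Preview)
Your approach is exactly the paper's: the corollary is stated there merely as a ``consequence of Theorem \ref{thm:2}'' with no separate proof, and your three preliminary observations together with the two directions are precisely the specialization needed. Your treatment of the arc issue is in fact more careful than the paper's---the paper leaves implicit that ``order'' here means one for which adjacent pairs in $\sk(\mathcal{J})$ are comparable (so that $G(\mathcal{J},\leq)$ contains only arrows), and you are right to flag that a general $\mathcal{J}$-compatible partial order could produce arcs and hence only an AnG.
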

For a probability distribution $P$, the first condition of the above corollary simplifies to that $P$ satisfies intersection, composition, and singleton-transitivity.
\section{Comparison to the Results in the Literature}\label{sec:res}
All results in the literature concerning faithfulness of certain probability distributions and certain types of graphs can be considered corollaries (or examples) of Theorem \ref{thm:2}. Here we verify this for some of the more well-known or interesting results. For brevity, we leave out some interesting results such as faithfulness of Gaussian and discrete distributions and LWF chain graphs; see \citet{pen11} and \citet{pen09,stub98} respectively.
\subsection{Faithfulness of $\mtp$ Distributions and Undirected Graphs} \emph{Multivariate totally positive of order $2$} ($\mtp$) distributions \citep{kar80} are distributions whose density $f$ is $\mtp$, that is $f(x)f(y)\leq f(x\wedge y)f(x\vee y)$, for all vectors $x,y$, where $x\wedge y=(\min(x_{1},y_{1}),\ldots,\min(x_{m},y_{m}))$ and  $x\vee y=(\max(x_{1},y_{1}),\ldots,\max(x_{m},y_{m}))$. In \citet{fal15}, it was shown that when an $\mtp$ distribution $P$ is a graphoid, it holds that $P$ and what we denote here by $G_u(P)$ are faithful. It is known that $\mtp$ distributions satisfy both singleton-transitivity and upward-stability, but not necessarily the intersection property; see \citet{fal15}. Hence, this result is a direct implication of Corollary \ref{coro:und}.
\subsection{Faithfulness of Gaussian Distributions and Undirected Graphs} \citet{lne07} provides regular Gaussian distributions that are faithful to any undirected graph; see Corollary 3 of the mentioned paper. In order to do so, given an undirected graph $G=(V,E)$, they provide the following matrix $A^{G,\epsilon}$, which acts as the covariance matrix of a Gaussian distribution that is faithful to $G$:
$$A^{G,\epsilon}=\left\{
    \begin{array}{ll}
1 &  \text{ if } i=j,\\
\epsilon & \text{ if } i\neq j  \text{ and $i$ and $j$ are adjacent in $G$},\\
0 & \text{ otherwise,}
    \end{array}
  \right.$$
where $i,j\in V$.

It is known that, for sufficiently small $\epsilon>0$, the corresponding Gaussian distribution is regular, and by Corollary \ref{coro:undgau}, we only need to show that it satisfies upward-stability. It is straightforward to show that for sufficiently small $\epsilon>0$, the inverse of $A^{G,\epsilon}$, which is the concentration matrix of the Gaussian distribution, is an \emph{$M$-matrix}, that is all its on-diagonal elements are positive and all its off-diagonal elements are non-positive. This is equivalent to the Gaussian distribution being $\mtp$; see \citet{kar83}. Hence, upward-stability is satisfied as discussed above.
\subsection{Unfaithfulness of Certain Gaussian Distributions and Undirected Graphs and DAGs} There are many examples in the literature for, and in fact it is easy to construct examples of, probability distributions that are not faithful to undirected graphs; for one of the first observations of this behaviour in a data set, see  Example 4 of \citet{cox93}. As a constructed example, let us discuss Example 1 of \citet{soh14}, which provided the Gaussian distribution for $X = (X_1 , X_2 , X_3 , X_4)$ with zero mean and positive definite covariance matrix
$$
\Sigma=\left[ {\begin{array}{cccc}
      3 & 2 & 1 & 2\\
      2 & 4 & 2 & 1\\
      1 & 2 & 7 & 1\\
      2 & 1 & 1 & 6
      \end{array} } \right].
$$
It can be seen that the concentration matrix $\Sigma^{-1}$ has no zero entries, but $X_1\ci X_3 \cd X_2$ since $\Sigma_{13} = \Sigma_{12}\Sigma^{-1}_{22}\Sigma_{23}$. Hence, the distribution and its complete skeleton are not faithful. This is clearly a consequence of the violation of upward-stability.

For DAGs too, it is easy to construct or observe examples where faithfulness is violated. One of the simplest of such examples, is what is related to the so-called ``transitivity of causation" in philosophical causality; see, for example, \citet{ram06}, where the goal is to show how the PC algorithm \citep{spioo} errs. Consider the graph $a\fra b\fra c$, where $a\ci c$ and $a\ci c\cd b$. This, in our language, is a clear violation of singleton-transitivity since the graph implies that $a,b$ and $b,c$ are always dependent.
\subsection{Faithfulness of Gaussian and Discrete Distributions and DAGs} It was shown in \citet{mee95} that, in certain
measure-theoretic senses, almost all the regular Gaussian and discrete distributions
that are Markov to a given DAG are faithful to it. It is known, for DAGs, that the global Markov property is equivalent to the \emph{factorization property} (see \citealt{lau96}), which holds if the joint density can factorize as follows:
$$f_V(x)=\prod_{v\in V}f_V(x_v\cd x_{\pa(v)}).$$
Since Gaussian and discrete distributions satisfy singleton-transitivity, in order to prove the result, one needs to show that ordered upward- and downward-stabilities, (8b) and (9b), are satisfied for almost all distributions that factorize w.r.t.\ the DAG. We refrain from showing this explicitly here due to technicality of the proof, but the general idea is similar to the proofs in Appendix B of \citet{mee95}: For (9b), we have that the probabilistic conditional independence holds for the marginal probability over $(i,j,C,k)$, and we need to exploit the factorization to show that the same conditional independence formula holds for the marginal probability over $(i,j,C)$ when $k\notin\an(\{i,j\}\cup C$. In this case the corresponding resulting polynomials are much simpler than those of \citet{mee95} since one needs to consider extra marginalization over only one variable $k$. For (8b), we will need to prove the other direction in a similar fashion.
\subsection{Faithfulness of Gaussian Distributions and Ancestral Graphs} The main goal of \citet{ric02} is to capture the conditional independence structure of a DAG model after marginalization and conditioning; see also \citet{sad13}. This was done by introducing the class of  AGs, and it was shown that every AG is probabilistic. In fact, there is a Gaussian distribution faithful to any AG, which is indeed the Gaussian distribution faithful to a DAG after marginalization and conditioning; see Theorem 7.5 of this paper.

However, by using our results, without going through the standard theory of ancestral graphs, it is possible to show that distributions that are faithful to a DAG are graphical after marginalization and conditioning: In order to do so, we use the notation in \citet{sad13}, for an independence model $\mathcal{J}$, to define the \emph{independence model after marginalization over $M$ and conditioning on $C$}, denoted by $\alpha(\mathcal{J};M,C$), for disjoint subsets $M$ and $C$ of $V$:
\begin{displaymath}
\alpha(\mathcal{J};M,C)=\{\langle A,B\cd D\rangle:\langle A,B\cd D\cup C\rangle\in \mathcal{J}\text{ and }(A\cup B\cup D)\cap (M\cup C)=\varnothing\}.
\end{displaymath}
First, we show that $\alpha(\mathcal{J};M,C$) satisfies condition 1 of Theorem \ref{thm:2}.
\begin{proposition}\label{prop:17}
 If $\mathcal{J}$ satisfies singleton-transitivity, intersection, and composition then so does $\alpha(\mathcal{J};M,C)$.
\end{proposition}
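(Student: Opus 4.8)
The plan is to verify the three closure properties one at a time, in each case translating a statement belonging to $\alpha(\mathcal{J};M,C)$ into a statement of $\mathcal{J}$ with the conditioning set enlarged by $C$, applying the corresponding property of $\mathcal{J}$, and then translating back. The observation that makes all of this routine is that $\alpha(\mathcal{J};M,C)$ is an independence model over $V\setminus(M\cup C)$, so every set occurring in any triple of $\alpha(\mathcal{J};M,C)$ is automatically disjoint from $M\cup C$. Hence for disjoint $X,Y,Z\subseteq V\setminus(M\cup C)$ one has $\langle X,Y\cd Z\rangle\in\alpha(\mathcal{J};M,C)$ if and only if $\langle X,Y\cd Z\cup C\rangle\in\mathcal{J}$, with no side condition left to check. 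I would record this equivalence at the start of the proof and then use it repeatedly without further comment.

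For singleton-transitivity: given $i,j,k\in V\setminus(M\cup C)$ and $D\subseteq V\setminus(M\cup C)$ with $\langle i,j\cd D\rangle\in\alpha(\mathcal{J};M,C)$ and $\langle i,j\cd D\cup\{k\}\rangle\in\alpha(\mathcal{J};M,C)$, the equivalence turns these into $\langle i,j\cd D\cup C\rangle\in\mathcal{J}$ and $\langle i,j\cd (D\cup C)\cup\{k\}\rangle\in\mathcal{J}$; singleton-transitivity of $\mathcal{J}$, applied with conditioning set $D\cup C$, yields $\langle i,k\cd D\cup C\rangle\in\mathcal{J}$ or $\langle j,k\cd D\cup C\rangle\in\mathcal{J}$, i.e.\ $\langle i,k\cd D\rangle\in\alpha(\mathcal{J};M,C)$ or $\langle j,k\cd D\rangle\in\alpha(\mathcal{J};M,C)$. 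Composition is entirely parallel: from $\langle A,B\cd D\rangle\in\alpha(\mathcal{J};M,C)$ and $\langle A,F\cd D\rangle\in\alpha(\mathcal{J};M,C)$ one gets $\langle A,B\cd D\cup C\rangle\in\mathcal{J}$ and $\langle A,F\cd D\cup C\rangle\in\mathcal{J}$, composition of $\mathcal{J}$ gives $\langle A,B\cup F\cd D\cup C\rangle\in\mathcal{J}$, and translating back gives $\langle A,B\cup F\cd D\rangle\in\alpha(\mathcal{J};M,C)$. For intersection, $\langle A,B\cd D\cup F\rangle\in\alpha(\mathcal{J};M,C)$ and $\langle A,F\cd D\cup B\rangle\in\alpha(\mathcal{J};M,C)$ become $\langle A,B\cd (D\cup C)\cup F\rangle\in\mathcal{J}$ and $\langle A,F\cd (D\cup C)\cup B\rangle\in\mathcal{J}$, and intersection of $\mathcal{J}$ with conditioning set $D\cup C$ delivers $\langle A,B\cup F\cd D\cup C\rangle\in\mathcal{J}$, that is, $\langle A,B\cup F\cd D\rangle\in\alpha(\mathcal{J};M,C)$.

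There is no genuine obstacle here; the only point requiring attention is the disjointness bookkeeping, namely that adjoining $C$ to a conditioning set keeps the three sets of each triple pairwise disjoint, so that the axioms of $\mathcal{J}$ are legitimately applicable. This is immediate because all sets in play are contained in $V\setminus(M\cup C)$ and hence disjoint from $C$. I would also remark that the same pattern shows $\alpha(\mathcal{J};M,C)$ inherits symmetry, decomposition, weak union and contraction, so a compositional graphoid stays a compositional graphoid under marginalization and conditioning; only the three properties named in the statement are needed for the subsequent argument, so I would not belabour the others.
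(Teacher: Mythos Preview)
Your proof is correct and follows exactly the same approach as the paper's: translate each hypothesis from $\alpha(\mathcal{J};M,C)$ to $\mathcal{J}$ by adjoining $C$ to the conditioning set, apply the corresponding axiom in $\mathcal{J}$, and translate back. The paper only writes out the composition case explicitly and dismisses the rest as ``similar and as straightforward,'' whereas you spell out all three and add the disjointness bookkeeping remark, but the underlying argument is identical.
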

\begin{proof}
For composition, assume that $\langle A,B\cd C'\rangle\in\alpha(\mathcal{J};M,C)$ and $\langle A,D\cd C'\rangle\in\alpha(\mathcal{J};M,C)$. By definition, we have $\langle A,B\cd C'\cup C\rangle\in\mathcal{J}$ and $\langle A,D\cd C'\cup C\rangle\in\mathcal{J}$. Composition for $\mathcal{J}$ implies  $\langle A,B\cup D\cd C'\cup C\rangle\in\mathcal{J}$. Therefore, $\langle A,B\cup D\cd C'\rangle\in\alpha(\mathcal{J};M,C)$. Proof of the others is similar to this one and as straightforward.
\end{proof}
Denote by $\alpha(G;M,C)$, the generated AG from the DAG $G$ and $M$ and $C$ two disjoint subsets of its node set that are marginalized over and conditioned on respectively. Here we do not go through the generating process of AGs from DAGs; for more details, see \citet{ric02,sad13}.  Without explicitly using the theory of stability under marginalization and conditioning for AGs, we show the following.
\begin{proposition}
If $\mathcal{J}$ and a DAG $G$ are faithful then $\alpha(\mathcal{J};M,C)$ and the ancestral graph $\alpha(G;M,C)$ are faithful.
\end{proposition}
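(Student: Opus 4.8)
The plan is to apply Proposition~\ref{prop:10} to the independence model $\mathcal{J}':=\alpha(\mathcal{J};M,C)$ and the graph $G':=\alpha(G;M,C)$, which is an ancestral graph and hence an AnG. We must check three things: that $\mathcal{J}'$ is minimally Markov to $G'$, that $\mathcal{J}'$ satisfies singleton-transitivity, and that $\mathcal{J}'$ satisfies ordered upward- and downward-stability w.r.t.\ the minimal preorder for $G'$. Since $\mathcal{J}$ and $G$ are faithful we have $\mathcal{J}=\mathcal{J}(G)$, so by Propositions~\ref{prop:110} and~\ref{prop:1} it is a singleton-transitive compositional graphoid; in particular it satisfies singleton-transitivity, intersection and composition, so Proposition~\ref{prop:17} gives that $\mathcal{J}'$ too satisfies singleton-transitivity. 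That disposes of the cheap ingredient.

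For minimal Markovness, that $\mathcal{J}'$ is Markov to $G'$ is the soundness direction of the generating construction: a walk in $\alpha(G;M,C)$ connecting given $D$ lifts -- using how the arrows, arcs and lines of $\alpha(G;M,C)$ encode the ancestry and collider structure of $G$ -- to a walk in $G$ connecting given $D\cup C$, so $\mathcal{J}(G')\subseteq\alpha(\mathcal{J}(G);M,C)=\mathcal{J}'$, and then $\sk(\mathcal{J}')\subseteq\sk(G')$ by Proposition~\ref{prop:0}. The reverse inclusion $\sk(G')\subseteq\sk(\mathcal{J}')$, i.e.\ that every edge of $\alpha(G;M,C)$ is necessary -- equivalently that two non-adjacent nodes of $\alpha(G;M,C)$ can be $m$-separated in $G$ by a set of the form $D\cup C$ with $D\subseteq V':=V\setminus(M\cup C)$ -- is a standard property of the construction, strictly weaker than the faithfulness statement we are after; see \citet{ric02,sad13}. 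Hence $\mathcal{J}'$ is minimally Markov to $G'$, and by Proposition~\ref{prop:minord} we have $G'=G(\mathcal{J}',\lesssim')$ for $\lesssim'$ the minimal preorder of $G'$.

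The remaining, and main, task is to show that $\mathcal{J}'$ satisfies ordered upward- and downward-stability w.r.t.\ $\lesssim'$. Here I would work with the dictionary between $\lesssim'$ and $G$: an arrow $k\fra l$ of $G'$ occurs only when $k\in\an_G(\{l\}\cup C)$ and $l\notin\an_G(\{k\}\cup C)$, while a line $k\ful l$ of $G'$ occurs only when each of $k,l$ is an ancestor in $G$ of the other ``modulo $C$'', which -- $G$ being acyclic -- forces both of $k,l$ into $\an_G(C)$ along a suitable configuration. Unwinding $\langle i,j\cd D\rangle\in\mathcal{J}'\iff\langle i,j\cd D\cup C\rangle\in\mathcal{J}=\mathcal{J}(G)$, adding to or deleting from $D$ a node $k\in V'$ that $\lesssim'$ allows becomes a question of whether the corresponding node may be added to or deleted from the conditioning set $D\cup C$ of an $m$-separation statement of $G$; this has to be settled by inspecting connecting walks in $G$ directly, because a line of $G'$ may be created by conditioning even between nodes that are incomparable in $G$, so the ordered-stability properties of $\mathcal{J}$ cannot simply be quoted term by term. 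I expect this matching of the anterior relation of $\alpha(G;M,C)$ -- in particular of its lines, which are precisely the edges the construction produces through conditioning -- against the $m$-separation behaviour of $G$ to be the main obstacle; it amounts to a lightweight substitute for the stability theory of ancestral graphs, but Proposition~\ref{prop:10} lets us get away with verifying only these two properties instead of re-deriving that theory. With all three ingredients in hand, Proposition~\ref{prop:10} yields that $\mathcal{J}'=\alpha(\mathcal{J};M,C)$ and $G'=\alpha(G;M,C)$ are faithful. (One could equally run the argument through Theorem~\ref{thm:2}, obtaining the faithful graph $G(\mathcal{J}',\lesssim')$ and identifying it with $\alpha(G;M,C)$ via the matching skeletons.)
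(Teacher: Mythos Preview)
Your plan is essentially the paper's own argument. The paper applies Theorem~\ref{thm:2} rather than Proposition~\ref{prop:10} (you note both options), invokes Proposition~\ref{prop:17} for condition~1, and then spends the proof on exactly what you identify as the main obstacle: verifying ordered upward- and downward-stability of $\alpha(\mathcal{J};M,C)$ w.r.t.\ the preorder of $\alpha(G;M,C)$ by directly analysing connecting walks in $G$ given $C\cup C'$ versus $C\cup C'\cup\{k\}$ or $C\cup C'\setminus\{k\}$.

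One remark on your sketch of that step. Your ``dictionary between $\lesssim'$ and $G$'' is indeed what is needed; in the paper the relevant facts are imported as Lemmas~2 and~3 of \citet{sad13}, which translate anteriority in $\alpha(G;M,C)$ into ancestry in $G$ relative to $C$. With those lemmas in hand the walk rerouting is short: for upward-stability, if $k$ sits on a collider on the connecting walk, one reroutes via the directed path from $k$ to the appropriate $l'\in\{i,j\}\cup C\cup C'$; for downward-stability, $k$ cannot appear as a non-collider because that would make it an ancestor of some node in $\{i,j\}\cup C\cup C'$, contradicting the hypothesis on $k$. So your anticipated obstacle is real but modest once the cited lemmas are available. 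The minimal-Markovness ingredient you add (needed for the Proposition~\ref{prop:10} route) is indeed a standard consequence of the construction and the paper does not re-prove it either.
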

\begin{proof}
By Proposition \ref{prop:17}, we only need to show that w.r.t.\ the ordering associated with $\alpha(G;M,C)$, condition 2 of Theorem  \ref{thm:2} is also satisfied by $\alpha(\mathcal{J};M,C)$.

To prove ordered upward-stability, assume that  $\langle i,j\cd C'\cup\{k\}\rangle\notin\alpha(\mathcal{J};M,C)$, where $k$ is an anterior of a node $l$, $l\in\{i,j\}$ or connected by lines to $l$, $l\in C'$ in $H=\alpha(G;M,C)$. We have that $\langle i,j\cd C\cup C'\cup\{k\}\rangle\notin\mathcal{J}$. Since $\mathcal{J}$ and $G$ are faithful, there is a connecting walk $\pi$ between $i$ and $j$ given $C\cup C'\cup\{k\}$. If $k$ is not on $\pi$, $\pi$ is connecting given $C\cup C'$. Otherwise, it holds that $k$ must be a collider on $\pi$. By Lemma 2 in \citet{sad13}, $k$ is in ancestor of a node $l'\in\{i,j\}\cup C\cup C'$. By replacing the subwalk of $\pi$ from $l'$ to $k$ by the directed path from $k$ to $l'$ if $l'\in\{i,j\}$, or by adding the directed path from $k$ to $l'$ plus the reverse from $l'$ to $k$ to $\pi$ if $l'\in C\cup C'$, we conclude that  $\langle i,j\cd C\cup C'\rangle\notin\mathcal{J}$. Hence, $\langle i,j\cd C'\rangle\notin\alpha(\mathcal{J};M,C)$.


To prove ordered downward-stability, assume that $\langle i,j\cd C'\setminus\{k\}\rangle\notin\alpha(\mathcal{J};M,C)$, where $k\in C'$ is not an anterior of any node $l\in \{i,j\}\cup C'$ in $H$.  We have that $\langle i,j\cd C\cup C'\setminus\{k\}\rangle\notin\mathcal{J}$. Since $\mathcal{J}$ and $G$ are faithful, there is a connecting walk $\pi$ between $i$ and $j$ given $C\cup C'\setminus\{k\}$. Node $k$ cannot be on $\pi$ since $k$ must be a non-collider and consequently an ancestor of a node in $\{i,j\}\cup C\cup C'$ in $G$, which by Lemma 3 in \citet{sad13}, implies that $k\in\an(l)$ in $H$. Hence, $k$ is not on $\pi$, and consequently $\pi$ is connecting given $C\cup C'$.  We conclude that  $\langle i,j\cd C\cup C'\rangle\notin\mathcal{J}$. Hence, $\langle i,j\cd C'\rangle\notin\alpha(\mathcal{J};M,C)$.
\end{proof}
This shows that, for every AG, there is a distribution faithful to it since, for every AG, there is a DAG that, after marginalization and conditioning, results in the given AG.
\section{Summary and Future Work}\label{sec:dis}
We provided sufficient and necessary conditions for an independence model, and consequently a probability distribution, to be faithful to a chain mixed graph. All the definitions and results concerning independence models are true for probabilistic independence models. The conditions can be divided into two main categories: 1) Those that ensure that the distribution is Markov to a graph whose skeleton is the same as the skeleton of the distribution --- these properties are namely intersection, composition, and ordered upward- and downward-stabilities. If the distribution is already pairwise Markov to the graph then intersection and composition suffice. Ordered upward- and downward-stabilities direct the edges of the skeleton of the distribution so that the pairwise Markov property is satisfied. 2) Those that ensure that a Markov distribution is also faithful to the graph. These are singleton-transitivity and ordered upward- and downward-stabilities.

The type of preordering or preorderings w.r.t.\ which the distribution satisfies ordered upward- and downward-stabilities implies the type or types of graphs that are faithful to the distribution. In order to only deal with simpler classes of graphs, it is enough to search through a more refined set of preorderings determined by the conditions that graphs of a specific class must satisfy. It is indeed true that when the skeleton of the graph or distribution is known, the preordering is equivalent to different ways the edges of the skeleton can be directed, but in principle the preordering of the variables is unrelated to the skeleton of the distribution.

The conditions of faithfulness provided in this paper for probability distributions are sufficient and necessary. Thus, based on these conditions  many families of distributions can be shown not to be suitable for graphical modeling. This also shows the importance of Gaussian distributions in graphical models as the provided conditions (other than ordered upward- and downward-stabilities) are clearly satisfied by the regular Gaussian distributions. In addition, these conditions help those who devise new parametric models to ensure that their models satisfy the provided conditions for faithfulness.

It is clearly important to study the characteristics of distributions that satisfy these conditions. The intersection property is well-studied and, in addition to positivity of the density that has been known to be a sufficient condition, necessary and sufficient conditions for a probabilistic independence to satisfy this property are known; see \citet{pet15}. For composition and singleton-transitivity, more studies are needed; in particular, it would be worthwhile to find simple sufficient conditions for probability distributions in order to satisfy these properties.

Another approach is to devise algorithms that test whether these conditions are satisfied given data or an independence model. Indeed one can argue for an alternative approach to
determine whether a distribution is faithful to some graph: one first applies the structure learning
algorithm (taking as input the distribution and outputting a graph) and then one tests whether the
separation relationships in the found graph correspond to the conditional indepedencies in the
distribution. However, testing our conditions are much more efficient. As an example, in the case of undirected graphs and Gaussian distributions, by our results, one only needs to test upward-stability for non-adjacent single elements $i$ and $j$ (starting with a minimal node cut-set as the conditioning set) rather than testing all global conditional independencies implied by the graph.

As another algorithmic consequence that the results in this paper may prompt, they lead to a \emph{constraint-based} structural learning algorithm (as opposed to score-based, for example, \citet{chi03}) most similar to the Fast Causal Inference (FCI) algorithm \citep{spioo} --- it finds the skeleton of the graph and then explores directions of the edges of the graph on the skeleton. Directing the edges of the skeleton is performed based on an exploitation of the preordering of the variables (and consequently the nodes of the graph) so that w.r.t.\ such a preordering ordered upward- and downward-stabilities are satisfied. This algorithm would have two advantages over the known constraint-based algorithms in the literature: Firstly, it is devised for a larger class of AnGs, which can be thought of as learning the LWF chain graphs with hidden and selection variables; see \citet{sad16}. Secondly, the exploitation of ordered upward- and downward-stability for structural learning can be combined with actually testing whether these conditions are satisfied, and the theory in the paper ensures the faithfulness of the generated graphs. We plan on providing the details of this algorithm elsewhere.

When dealing with data, another issue is that obtaining a faithful graph is very
sensitive to hypothesis testing errors for inferring the independence statements from data; see \citet{uhl13}. The concept of \emph{strong faithfulness} \citep{zha03} has been suggested for the Gaussian case in order to deal with this issue. Although strong faithfulness, by nature, does not seem to be characterizable in this way, it is an interesting problem to study which of the equivalent conditions to faithfulness are more prone to such sensitivity.



\acks{The original idea of this paper was inspired by discussions at the
American Institute of Mathematics workshop ``Positivity, Graphical Models,
and the Modeling of Complex Multivariate Dependencies'' in October 2014.
The author thanks the AIM and NSF for their support of the workshop and is indebted to
other participants of the discussion group, especially to Shaun Fallat, Steffen Lauritzen, Nanny Wermuth, and Piotr Zwiernik for follow-up discussions. The author is also grateful to Rasmus Petersen for valuable discussions. This work was partly supported by grant $\#$FA9550-12-1-0392 from the U.S. Air Force Office of Scientific Research (AFOSR) and the Defense Advanced Research Projects Agency (DARPA).}



\vskip 0.2in
\bibliography{bib}

\end{document}